\newtheorem{theorem}{Theorem}[section]
\newtheorem{proposition}[theorem]{Proposition}
\newtheorem{lemma}[theorem]{Lemma}
\newtheorem{corollary}[theorem]{Corollary}
\newtheorem{definition}[theorem]{Definition}
\newtheorem{example}[theorem]{Example}
\newtheorem{conjecture}[theorem]{Conjecture}
\theoremstyle{remark}
\newtheorem{remark}[theorem]{Remark}
\renewenvironment{proof}{{\noindent\bf Proof.}}{\hfill $\Box$\par\vskip3mm}
\newcommand{\defqed}{\hspace*{\fill} $\square$}
\newcommand{\Hom}{{\rm Hom}}
\newcommand{\End}{{\rm End}}
\newcommand{\Ext}{{\rm Ext}}
\newcommand{\Aut}{{\rm Aut}\,}
\newcommand{\ann}{{\rm ann}\,}
\def\FF{{\mathbb F}}
\def\CC{{\mathbb C}}
\def\ZZ{{\mathbb Z}}
\def\QQ{{\mathbb Q}}
\def\KK{{\mathbb K}}
\begin{document}
\title{Incidence algebras and thin representation theory}

\begin{abstract}
We provide a unified approach, via deformations of incidence algebras, to several important types of representations with finiteness conditions, as well as the combinatorial algebras which produce them. We show that over finite dimensional algebras, representations with finitely many orbits, or finitely many invariant subspaces, or distributive coincide, and further coincide with thin modules in the acyclic case. Incidence algebras produce examples of such modules, and we show that algebras which are locally hereditary, and whose projective are distributive, or equivalently, which have finitely many ideals, are precisely the deformations of incidence algebras, and they are the finite dimensional algebra analogue of Pr${\rm \ddot{u}}$fer rings. New characterizations of incidence algebras are obtained, such as they are exactly algebras which have a faithful thin module. A main consequence of this is that ``every thin module comes from an incidence algebra": if $V$ is either thin, or $V$ is distributive and $A$ is acyclic, then $A/{\rm ann}(V)$ is an incidence algebra and $V$ can be presented as its defining representation. We classify thin/distributive modules, and respectively deformations, of incidence algebras in terms of first and second cohomology of the simplicial realization of the poset. As a main application we obtain a complete classification of thin modules over any finite dimensional algebra. Their moduli spaces are multilinear varieties, and we show that any multilinear variety can be obtained in this way. A few other applications, to Grothendieck rings of combinatorial algebras, to graphs and their incidence matrices, to linear algebra (tori actions on matrices), and to a positive answer to the ``no-gap conjecture" of Ringel and Bongartz, in the distributive case, are given. Other results in the literature are re-derived. 

\end{abstract}

\author{Miodrag C. Iovanov; Gerard D. Koffi}
\address{ 
Miodrag C. Iovanov\\
University of Iowa, 14 MacLean Hall\\
Iowa City, IA 52242-1419 }
\address{Gerard D. Koffi\\
University of Nebraska, Kearney,\\
Omaha, Nebraska
}

\email{miodrag-iovanov@uiowa.edu, yovanov@gmail.com;  koffigd@unk.edu; GerardKoffi@creighton.edu}

\thanks{2010 \textit{Mathematics Subject Classification}. Primary 16G20;
Secondary 05E45, 06A11, 18G99, 16T30, 16S80, 55U10}
\date{}
\keywords{incidence algebra, distributive, thin representation, finitely many orbits, deformations, cohomology, poset, simplicial realization}
\maketitle

\section{Introduction and Preliminaries}

In the representation theory of finite dimensional algebras, several types of finiteness conditions appear for representations, and algebras respectively; such are representations (algebras) with finitely many orbits from a (subgroup) of the units of the algebra, or finitely many submodules (ideals), distributive, thin. The intrinsic finiteness of these conditions make it so that combinatorialy defined algebras are a good source of examples producing such representations and algebras, and one main class of is that of incidence algebras.  

Incidence algebras of partially ordered finite (or more generally, locally finite) sets were introduced by G. Rota \cite{Ro} in the 60's in combinatorics to study inversion type formulas in a unified way. Given a finite poset (or quasi-ordered set) $P$, the incidence algebra $I(P,\KK)$ of $P$ over a field $\KK$ is the algebra with basis $f_{xy}$ indexed by pairs $x\leq y$ in $P$ and convolution (incidence) multiplication given by $f_{xy}f_{yz}=f_{xz}$, and $f_{xy}f_{zt}=0$ when $y\neq z$. These algebras have a well established history; they appear from many directions in mathematics and have thus been investigated by many authors,  from several (often overlapping) perspectives: combinatorial \cite{AFS, APRT, DS, Ro, Rv, SD, MW}, topological \cite{Cs,GR1,GS,IZ,W}, algebraic \cite{AA1, AA2, AHR1, AHR2, Ab, ABW,APRT,Ba,DIP,DK,F1,F2,GR1,GR2,Kr,simson,Sp0,Sp,SD}, representation theoretic \cite{Ba,DS,K1,K2,NR1,NR2,simson}, homological \cite{Cs,GR1,GR2,GS,IZ,Rd}; see also the monographs \cite{SD,W} and references within. This certainly represents only a modest list. Furthermore, incience algebras are special types of finite partial semigroup algebras, a theory which has seen great development during recent years \cite{Ho,O,Pu1,R}, and topological/homological methods have been fruitful and produced very interesting results in the representation theory of such structures (see \cite{MSS,MS} and references therein).
On the other hand, incidence algebras of finite type play a central role in representation theory of finite dimensional algebras, and implicitly appear, for example, in Nazarova and Roiter's work on the Brauer-Thrall conjectures, and Kleiner's work on subspace arrangements and finite type: any finite subspace arrangement can be completed with respect to intersections, and can be regarded as a special kind of representation of a suitable poset, where maps are all injective. Incidence algebras also provide a fertile ground for testing various conjectures, due to flexibility in choosing the poset, and in this respect have also been known as structural matrix algebras: that is, they can be defined as subalgebras of matrix algebras obtained by considering the set of matrices having arbitrary entries at a set of prescribed positions $(i,j)\in S$ and $0$ elsewhere. In particular, this definition automatically gives a representation of the incidence algebra which we will call 
the {\it defining representation}. 

An important property of incidence algebras is that they provide examples of distributive representations, that is, representations whose lattice of submodules (invariant subspaces) is a distributive lattice (i.e. $A\cap(B+C)=A\cap B+A\cap C$ for any submodules $A,B,C$). The class of distributive modules has also been studied by many authors (see for example \cite{Camillo, Stephenson} or the monograph \cite{T} and references within). Indeed, the projective indecomposable modules of an incidence algebra of a finite poset are distributive \cite{Ba,SD}. A module is called semidistributive if it is a direct sum of distributive modules. Hence, an incidence algebra is left and right semidistributive. Such modules show up frequently when studying algebras of finite representation type, that is, algebras which have only finitely many types of isomorphism of indecomposable modules up to isomorphism. More classically, on the commutative side, distributivity occurs naturally and examples play a central role: Dedekind domains, and more generally, Pr${\rm \ddot{u}}$fer domains are distributive rings. Also motivated by the interest in algebras of finite representation type, another kind of (representation of) an algebra or a ring $A$ which has been surfacing in literature is that for which the induced action of the group of units $U(A)$ (or some suitable subgroup) has finitely many orbits. 

The goal of this paper is to, one hand, completely classify distributive, thin and the other types of representations mentioned above, over any finite dimensional algebra, and on the other, fully understand their relation with the theory of incidence algebras and algebras with similar properties, and obtain consequences on the latter. Our main results will show that, in fact, all representations with such a finiteness condition come precisely from a combinatorial situation - an incidence algebra. Our unified approach is via a deformation theory of incidence algebras (which we introduce) and cohomology of the simplicial realization of the poset. Besides the aforementioned full classification of representations, which is one of our main results, many other applications are derived such as new characterizations and classifications of incidence algebras, their deformations, and of other algebras having a distributive, finiteness or other hereditary type properties, applications to graphs, incidence matrices, linear algebra and some open questions in representation theory, as well as consequences on representation and Grothendieck rings of such algebras; many results in the literature are re-derived as a consequence of the setup.


The paper is organized as follows: we first establish the general properties of representations with finiteness conditions in Section \ref{s.fin} and applications in Section \ref{s.3}. We then introduce the deformation theory, the main tool, in Section \ref{s.inc}, give classifications of classes of algebras with finiteness properties, and establish the relation to the representation theory of these algebras and some of the main results in Section \ref{s.rep}. We then give our applications to classifying thin representations Section \ref{s.thin}, to incidence matrices of graphs and linear algebra Section \ref{s.graphs}, to a partial answer to the ``no gap conjecture in Section \ref{s.ac} and to Grothendieck and representation rings in the last Section; we end with a list of questions.

\subsection{{\sc The results}}

Let $A$ be an algebra, $M$ an $A$-module. The properties of $M$, being distributive, or having finitely many $U(A)$-orbits are, as it turns out that, closely related to a third: namely, the condition that a representation has only finitely many invariant subspaces, again a ``finite type" condition. Hence, we first proceed to give a statement connecting the three conditions in general. Our first main result is the following (Theorem \ref{t1}): 

\begin{theorem}
Let $R$ be a ring, and $M$ an $A$-module. If $R$ is semilocal, then the following assertions are equivalent (and in general (1)$\Rightarrow$(2)$\Leftrightarrow (3)$). \\
(1) $M$ has finitely many orbits under the action of $U(R)$.\\
(2) $M$ has finitely many submodules.\\
(3) $M$ has finite length and any semisimple subquotient is squarefree of simple modules $T$ with infinite endomorphism ring; that is, if $T$ is a simple module with infinite endomorphism ring, then the module $T\oplus T$ is not a subquotient of $M$. \\
Moreover, when $A$ is an algebra over an infinite field, or an Artin algebra with no finite modules, then these statements are equivalent to $M$ being distributive.
\end{theorem}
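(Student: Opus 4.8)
The plan is to prove the equivalences in a cycle, treating the ``moreover'' clause about distributivity separately at the end. First I would establish $(2)\Leftrightarrow(3)$, which is the purely module-theoretic core and does not require semilocality. For $(3)\Rightarrow(2)$: if $M$ has finite length and every semisimple subquotient is squarefree in the simples with infinite endomorphism ring, I would argue by induction on the length of $M$. A submodule $N\subseteq M$ is determined by its image in a composition series; the only place infinitely many submodules can appear is when a simple $T$ with $|\End(T)|=\infty$ occurs with multiplicity $\ge 2$ in some subquotient, since then one gets a $\mathbb{P}^1$-family of diagonal copies of $T$ inside $T\oplus T$. The squarefree hypothesis rules this out layer by layer, and a careful bookkeeping argument (counting submodules via the finitely many ``socle-type'' choices at each step, using that $\Hom(T,S)$ is finite whenever $T\not\cong S$ or $\End(T)$ is finite) yields finiteness. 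Conversely $(2)\Rightarrow(3)$: if $M$ has infinitely many submodules, either $M$ has infinite length (producing an infinite strictly ascending or descending chain, hence infinitely many submodules — here I would need $R$ semilocal, or rather just that $M$ is, say, finitely generated, to pass to finite length; actually this direction needs care and I'd handle the length issue via the standard fact that a module with finitely many submodules is Noetherian and Artinian, hence of finite length), or it has finite length but some subquotient contains $T\oplus T$ with $\End(T)$ infinite, which supplies infinitely many intermediate submodules.

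Next, $(1)\Rightarrow(2)$ in general: the submodules of $M$ that are ``cyclic up to the $U(R)$-action'' — more precisely, if $U(R)$ has finitely many orbits on $M$, then in particular $M$ cannot contain an infinite strictly increasing chain of submodules (an element deep in the chain and one not in it cannot be in the same orbit, and one extracts only finitely many orbit-representatives), so $M$ has finite length; then the submodule lattice is a finite-length modular lattice, and each submodule $N$, being generated by finitely many elements each lying in one of finitely many orbits, can be shown to fall into finitely many possibilities. The key point is that $g\cdot N$ is a submodule isomorphic to $N$ for $g\in U(R)$, and an orbit-counting argument bounds the number of submodules of each isomorphism type; semilocality is not needed here but is harmless.

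The implication $(2)\Rightarrow(1)$ (and with it $(3)\Rightarrow(1)$) is where semilocality of $R$ is essential and is, I expect, the main obstacle. The idea is: assuming $M$ has finitely many submodules, hence finite length and the squarefree condition $(3)$, I want $U(R)$ to act with finitely many orbits. One reduces modulo the radical: over $R/J(R)$, which is semisimple Artinian (this is exactly where semilocal is used), $M/JM$ is a squarefree semisimple module, and its automorphism group is a product of matrix groups $Gl_1$ over the (possibly infinite) division rings $\End(T_i)$ — so $\Aut(M/JM)$ acts transitively on the nonzero vectors of each homogeneous component, and because of squarefreeness this suffices to get finitely many orbits on $M/JM$. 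Then one lifts: using that $U(R)\to U(R/J)$ is surjective with kernel $1+J$, and that the filtration of $M$ by $J^iM$ is finite with squarefree semisimple quotients, one shows by induction on the radical length that $1+J$ together with the lifted automorphisms produce finitely many orbits on each $M/J^{i}M$, hence on $M$. The subtlety is controlling the action of $1+J$ on the successive quotients and checking that the ``squarefree'' hypothesis propagates; I would do this via a dévissage along the radical filtration, at each stage invoking $(3)$ for the relevant subquotient.

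Finally, the ``moreover'' clause: over an algebra $A$ over an infinite field (or an Artin algebra with no finite modules), a simple module $T$ has infinite endomorphism ring (a division algebra over an infinite field, or infinite by hypothesis), so condition $(3)$ says precisely that $M$ has finite length with all semisimple subquotients squarefree — and this is exactly the classical characterization of distributive modules (a module is distributive iff it has no subquotient isomorphic to $T\oplus T$ for a simple $T$; over an infinite field this is the standard criterion, cf.\ \cite{Camillo, Stephenson, T}). I would cite this equivalence and note that the ``infinite field'' hypothesis is what forbids the finite simple modules that would otherwise satisfy $(3)$ without being distributive-safe. I expect this last part to be short, essentially a dictionary entry matching $(3)$ to the known lattice-theoretic definition of distributivity.
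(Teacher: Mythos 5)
Your handling of $(1)\Rightarrow(2)$, of the equivalence $(2)\Leftrightarrow(3)$ (induction on length through the socle, with the number of complements of a simple $S$ controlled by $|\Hom(S,M/S)|$ and the $T\oplus T$ obstruction), and of the distributivity clause via Camillo's squarefree criterion is essentially the paper's argument, modulo some loose phrasing (your "$(2)\Rightarrow(3)$" paragraph is really the contrapositive of the other direction plus the standard fact that finitely many submodules forces finite length, which you do self-correct; and in $(1)\Rightarrow(2)$ the clean statement is simply that every submodule is a union of $U(R)$-orbits, or that elements in one orbit generate the same cyclic submodule).

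The genuine gap is the semilocal direction $(2)/(3)\Rightarrow(1)$, which you correctly identify as the main obstacle but do not actually prove. The paper's proof rests on one key input, an observation of Bass (semilocal rings have stable range one): if $Rx=Ry$, write $y=ax$, $x=by$, deduce $Rb+\ann(y)=R$, and Bass's Lemma 6.4 gives a unit $u\in b+\ann(y)$, so $x=by=uy$; hence two elements generating the same cyclic submodule lie in the same $U(R)$-orbit, orbits inject into the finite set of cyclic submodules, and $(2)$ finishes the proof in three lines. Your proposed substitute, a d\'evissage along the radical filtration, does not work as described. First, condition $(3)$ does not make $M/JM$ squarefree: simples with finite endomorphism ring may occur with multiplicity, so the description of $\Aut(M/JM)$ as a product of $Gl_1$'s is wrong; moreover the group acting is the image of $U(R)$ (i.e.\ of $U(R/J)$, units of the ring), not the module automorphism group $\Aut_{R/J}(M/JM)$ — for an isotypic component $T^n$ these act very differently, and it is exactly the unit action that has infinitely many orbits when $\End(T)$ is infinite and $n\geq 2$. (The top layer can still be salvaged, since over a semilocal ring a simple with finite endomorphism ring is itself finite, but your stated reason is not correct.) Second, and more seriously, the lifting step is precisely the nontrivial point and fails in the naive form: elements congruent modulo $JM$ need not be in the same $U(R)$-orbit (already $R=\KK[t]/(t^2)$, $M=R$, $x=t$, $y=0$), and, as the paper notes in Remark \ref{r1}(2), finitely many orbits on a submodule and on the corresponding quotient do not imply finitely many orbits on the extension. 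So the induction along $J^iM$ has no engine: controlling the orbits on the fibers amounts to a statement of Bass type, which is the idea missing from your proposal.
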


The third condition in a modified form was already known to characterize distributivity \cite{Camillo, Stephenson}. Also, condition (2) was studied before by a few authors for the case of the regular representation (see \cite{Hirano} and references). In fact, the results of \cite{Hirano} characterizing rings $R$ which have finitely many orbits under the left action of $U(R)$ are re-obtained as a direct consequence of the previous theorem. 

Next, we aim to answer the question ``what is the general procedure to construct distributive representations?". As noted, incidence algebras produce many of such representations; one can then ask how general is this procedure on one hand, and on the other, whether having such distributive modules characterizes incidence algebras in some way. Feinberg \cite{F1,F2} has shown that algebras which have a faithful distributive representation, and satisfy two other technical conditions of a combinatorial nature, are incidence algebras. He also classified faithful distributive representations of incidence algebras by the group of multiplicative functions on the poset, a combinatorial notion. Still, one may ask whether a characterization which parallels that of quiver algebras of acyclic quivers (as finite dimensional hereditary algebras) to some extent exists. One important remark in this regard is that of Bautista, who introduced the locally hereditary algebras \cite{Ba} (algebras for which a local submodule of a projective module is projective) and showed that incidence algebras of finite posets are locally hereditary. Hence, one may ask to what extend does some converses hold.

To answer all these questions and put them in perspective, we introduce a class of algebras close to incidence algebras, and which are {\it flat deformations of incidence algebras}. This is somewhat close in spirit to homological methods and work present in other contexts in literature, including work related to Schurian algebras (i.e. algebras for which $\dim(eAf)\leq 1$ for all primitive orthogonal idempotents $e,f$); \cite{AA1,AA2,ACMT,IZ,MSS,MS}. The convergence of the algebra, topology and combinatorics will lead, however, to several classifications and descriptions of several classes of algebras and representations.

Briefly, we define a deformation $I_\lambda(P,\KK)$ of the incidence algebra of $P$ as the algebra obtained by simply twisting the multiplication of the combinatorial basis $f_{xy}$, by non-zero scalars $\lambda(x,y,z)$, such that it remains associative: the new multiplication $*_\lambda$ is then $f_{xy}*_\lambda f_{yz}=\lambda(x,y,z)f_{xz}, \,\lambda(x,y,z)\in\KK^*$ (Definition \ref{d.1}). We show that isomorphism types of such algebras are classified by the orbits of the action of the automorphism group of the poset $P$ on the singular cohomology $H^2(\Delta(P),\KK^\times)$, where $\Delta(P)$ is the simplicial realization of the poset $P$. Using this setup, the faithful distributive representations of the incidence algebra are easily seen to be in 1-1 correspondence with $H^1(\Delta(P),\KK)$, recovering in particular Feinberg's result \cite{F1}. By bringing in homological methods and reinterpretation, our result however can be used to directly compute the variety (set) of all such representations. 

The framework in all statements that follow is that of finite dimensional (basic) pointed algebras; that is, algebras for which every simple module is 1-dimensional (equivalently, $\End(S)=\KK$ for every simple $A$-module $S$, and $A$ is basic); up to Morita equivalence, one classically reduces to this case for representation theory. (Such algebras are usually just called pointed, but we may sometimes use basic pointed in order to emphasize the hypothesis).  

Our second main result is then the following structure and characterization statement (Theorem \ref{t.definc}), answering the above mentioned question regarding the distributivity and locally-hereditary properties, and further relating these to another finiteness (so combinatorial) condition - the finiteness of the set of two-sided ideals.

\begin{theorem}
Let A  be finite dimensional basic pointed $\KK$-algebra over an infinite field $\KK$. Then the following are equivalent. 
\begin{enumerate}
\item $A$ is a deformation of an incidence algebra of a finite poset (quasi-ordered set) $P$.
\item $A$ is locally hereditary and semidistributive (left, equivalently, right).
\item $A$ has finitely many two sided ideals and is (left or right) locally hereditary.
\end{enumerate}
\end{theorem}

Hence, one can view deformations of incidence algebras as the counterpart of Pr${\rm \ddot{u}}$fer rings in the world of finite dimensional algebras. We note that this theorem explains, on one hand, how far are algebras having the two properties characteristic to incidence algebras - (semi)distributivity and locally hereditary - from being an incidence algebra. It also shows that deformations of incidence algebras are very much like incidence algebras in many respects. On another hand, it provides a characterization of algebras having finitely many two sided ideals within the class of locally hereditary algebras. In fact, we show that in general, for the acyclic case (i.e. when the Ext quiver of $A$ is acyclic), an algebra $A$ has finitely many two sided ideals if and only if $A$ has finitely many left (equivalently right) ideals (Theorem \ref{t.twosided}), and hence, equivalent to the statement that $A$ is semidistributive.

One application of this is our third main result, which gives representation-theoretic characterizations of incidence algebras (Theorem \ref{t.inc}); recall that an $A$-representation $M$ is thin (or squarefree) if no simple $A$-module shows up as a factor in the Jordan-H$\rm\ddot{o}$lder series of $M$ more than once. By our results, in the acyclic case, a module is thin exactly when it is distributive, and this is equivalent to the condition that $M$ has only finitely many invariant subspaces.

\begin{theorem}
Let $A$ be a basic pointed algebra. The following are equivalent:
\begin{enumerate}
\item $A$ is an incidence algebra of a poset.
\item $A$ is acyclic and has a faithful distributive representation. 
\item $A$ has a faithful thin representation.
\item $A$ has a faithful representation with dimension (length) vector $(1,1,\dots,1)$ (i.e. all simples show up exactly once).
\end{enumerate}
\end{theorem}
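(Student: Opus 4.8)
The plan is to prove the chain of equivalences by establishing $(1)\Rightarrow(4)\Rightarrow(3)\Rightarrow(2)\Rightarrow(1)$, using Theorem \ref{t.definc} as the engine for the harder passage back to incidence algebras. The implications $(1)\Rightarrow(4)$ and $(3)\Rightarrow(2)$ are essentially tautological: the defining representation of an incidence algebra $I(P,\KK)$ has dimension vector $(1,\dots,1)$ and is faithful, giving $(1)\Rightarrow(4)$; and a faithful representation with all simples appearing exactly once is in particular thin (squarefree) and faithful, giving $(4)\Rightarrow(3)$. For $(3)\Rightarrow(2)$ one uses that a thin module is distributive in the relevant cases — but since $(2)$ only asks for a faithful distributive representation and for acyclicity, the main content is to show that an algebra with a faithful thin representation is acyclic; I would argue that if the Ext quiver had an oriented cycle through vertices $e,f$, then $eAf$ and $fAe$ would both be nonzero, forcing, via the faithful thin module $M$, a contradiction with squarefreeness of $M$ (a loop or 2-cycle in the quiver produces, in any faithful module, a repeated composition factor — essentially because $\mathrm{rad}(A)$ acting nontrivially along a cycle forces two Jordan-Hölder copies of some simple). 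One then invokes the fact, recorded in the excerpt, that in the acyclic case thin $=$ distributive, to land in $(2)$.

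The heart of the proof is $(2)\Rightarrow(1)$. Here I would argue as follows. Suppose $A$ is acyclic and has a faithful distributive representation $M$. By Theorem \ref{t1} (applicable over an infinite field, or after reducing to that case; a basic pointed algebra has all $\End(S)=\KK$, so the distributivity/finite-submodule characterization applies), $M$ being distributive means $M$ has finite length with all semisimple subquotients squarefree. The key step is to upgrade the faithful distributive module $M$ to the conclusion that $A$ itself is semidistributive and locally hereditary: faithfulness of $M$ means $A$ embeds in a finite direct sum of copies of $M$ (as $A$ is finite dimensional, $A \hookrightarrow M^n$ for some $n$), hence $A$ as a left module is a submodule of a semidistributive module, and since submodules of distributive modules are distributive, $A$ is semidistributive as a left module. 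Combined with acyclicity, Theorem \ref{t.twosided} (finitely many two-sided ideals $\Leftrightarrow$ semidistributive in the acyclic case) tells us $A$ has finitely many two-sided ideals. To conclude via Theorem \ref{t.definc}, I still need local hereditariness; for this I would use that an acyclic semidistributive pointed algebra with a faithful distributive module forces each $\dim_\KK(eAf)\le 1$ — that is, condition (4) of Theorem \ref{t.definc} — because a faithful thin module witnessing all the composition-factor multiplicities being $1$ constrains the multiplication $eAf \times fAg \to eAg$, and $\dim(eAf) \ge 2$ would again violate squarefreeness in $M$. So $A$ is a deformation of an incidence algebra; but in the acyclic case every nonzero scalar $\lambda(x,y,z)$ can be absorbed by rescaling the basis elements $f_{xy}$ (the obstruction lives in $H^2(\Delta(P),\KK^*)$, and for a poset the relevant cohomology class is trivializable by a diagonal change of basis whenever the representation is faithful of dimension vector all-ones), so the deformation is trivial and $A \cong I(P,\KK)$ itself.

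The main obstacle I anticipate is the final reduction from "deformation of an incidence algebra" to "incidence algebra on the nose": one must show the cohomology class $[\lambda]\in H^2(\Delta(P),\KK^*)$ that classifies the deformation is trivial precisely because $A$ admits a faithful representation of dimension vector $(1,\dots,1)$. The cleanest route is probably to observe that such a representation is itself, up to the choice of bases, the defining representation of $I_\lambda(P,\KK)$, and that a $(1,\dots,1)$-dimensional representation of $I_\lambda(P,\KK)$ assigns to each relation $x\le y$ a scalar $m_{xy}\in\KK$, with faithfulness forcing all $m_{xy}\ne 0$; the compatibility $m_{xy}m_{yz} = \lambda(x,y,z) m_{xz}$ then exhibits $\lambda$ as a coboundary, so $[\lambda]=0$ and $I_\lambda(P,\KK)\cong I(P,\KK)$. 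The quasi-ordered set case is handled the same way, replacing the poset by its skeleton and allowing the $f_{xy}$ with $x\sim y$ to generate matrix blocks; I would remark that the same argument goes through verbatim since basic pointedness is preserved. A secondary subtlety worth a sentence is checking that the hypotheses of Theorem \ref{t1} and Theorem \ref{t.definc} (infinite field, or "Artin algebra with no finite modules") are genuinely available here, or else noting that the equivalences $(1)\Leftrightarrow(3)\Leftrightarrow(4)$ and most of $(2)$ are field-independent and only the distributive reformulation needs that caveat.
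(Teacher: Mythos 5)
Your overall architecture---force $A$ into the hypotheses of Theorem \ref{t.definc} to conclude it is a deformation $I_\lambda(P,\KK)$, then use the faithful all-ones representation to exhibit $\lambda$ as a coboundary via $m_{xy}m_{yz}=\lambda(x,y,z)m_{xz}$---is the paper's, and that final cohomological step is correct. The gaps are in how you verify the hypotheses of Theorem \ref{t.definc}. First, the inference ``$A\hookrightarrow M^n$, $M^n$ is semidistributive, hence $A$ is left semidistributive'' is invalid: a submodule of a semidistributive module need not be semidistributive. For instance the local algebra $B=\KK[x,y]/(x,y)^2$ embeds, as a module over itself, into $B/(x)\oplus B/(y)$, a direct sum of two uniserial (hence distributive) modules, yet $B$ is indecomposable and not distributive, so not semidistributive. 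What rescues your claim in the present setting is the multiplicity-one property: by Proposition \ref{p.6.1}, faithful $+$ distributive $+$ acyclic forces $[M:S]=1$ for every simple $S$, so $eM=\KK m_e$ is one-dimensional; since the image of $e$ under $A\hookrightarrow M^n$ is a tuple of multiples of $m_e$, one gets $\ann(e)=\ann(m_e)$ and hence $Ae\cong Am_e\subseteq M$ is distributive. This is the paper's argument, and it must be run projective-by-projective inside a single copy of $M$, not through $M^n$.

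Second, and more seriously, what Theorem \ref{t.definc} needs beyond semidistributivity is local hereditariness (its condition (2)), equivalently the second half of condition (4): the composition $\Hom(P,Q)\times\Hom(Q,R)\to\Hom(P,R)$ must be nonzero whenever both factors are nonzero. The inequality $\dim_\KK(eAf)\le 1$, which is all your sketch actually argues, is not condition (4); it is semidistributivity restated, and it does not suffice: the radical-square-zero quotient of the path algebra of $1\to 2\to 3$ is acyclic, semidistributive, and has finitely many two-sided ideals, yet the two nonzero Hom spaces between consecutive projective indecomposables compose to zero, so it is not a deformation of an incidence algebra (consistently, it admits no faithful thin module). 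The missing argument is precisely where the faithful module must be used structurally, and the paper supplies it in one line you should add: any nonzero $\theta:Ae\to M$ has $\theta(e)\in eM=\KK m_e$ a nonzero multiple of $m_e$, and since $\ann(m_e)=\ann(e)$, $\theta$ is injective; composing an arbitrary nonzero $\psi:Ae\to Af$ with $Af\cong Am_f\subseteq M$ then shows $\psi$ is injective, i.e.\ $A$ is locally hereditary. (This also yields acyclicity for free, so your separate cycle argument in (3)$\Rightarrow$(2) can be dropped; as stated its mechanism is shaky anyway---the clean contradiction is that a nonzero element of $eJ(A)e$ obtained by multiplying along a cycle would act by a nonzero scalar on the one-dimensional simple $eAe$-module $eM$, which the radical must annihilate---rather than a literal ``repeated composition factor'' claim.) With semidistributivity and local hereditariness obtained this way, the remainder of your proposal, including the treatment of the quasi-ordered case up to Morita equivalence, goes through.
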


This follows naturally from the cohomological setup, and is an application of general principles of deformation theory. First, use the representation theoretic conditions to get that an algebra $A$ with any of the properties in the Theorem above is locally hereditary and semidistributive; hence, it is a deformation of an incidence algebra by a 2-cocycle $\lambda$. The existence of a faithful distributive module $M$ translates to the fact that there is an element $\alpha$ in $H^1(\Delta(P),\KK^\times)$ which produces it. Then, interpreting the fact that $M$ is an $A$-representation in cohomology yields a connection between $\lambda$ and $\alpha$: namely, one gets that $\lambda$ is the image of the 1-cocycle $\alpha$ via the differential, so $\lambda$ is a coboundary. Thus, $\lambda\cong 1$ in $H^2$, the deformation is trivial, and $A$ is (isomorphic to) an incidence algebra. 

\subsection{Applications} \,

\vspace{.2cm}

{\it \noindent 1. Generic characterization of thin modules.} 

\vspace{.2cm}

One of the major applications of the results is the following ``generic" characterization of distributive modules, and of thin modules. Let $A$ be an algebra and $M$ an $A$-module. Assume that either $M$ is thin, or $M$ is distributive and $A$ is acyclic. Then, by the previous theorem, it follows that $A/\ann(M)$ is an incidence algebra of a poset $P$ (or quasi-ordered set if $A$ is not basic). Furthermore, after a choice of an incidence (combinatorial) basis of $A$ and a basis of $M$, we show that the representation can be seen precisely as (i.e. it reduces to) the defining representation of $A$. This shows that in the acyclic case, {\bf incidence algebras produce all possible distributive representations}, and in general they {\bf produce all possible thin representations}. This also shows that for any thin representation $M$ over any finite dimensional algebra $A$, a complete invariant for $M$ is given by a pair $(I,\alpha)$ where $I=\ann(M)$ is an ideal of $A$, and an element $\alpha\in H^1(P_I,\KK^\times)$, where $P_I$ is the poset for which $A/I\cong I(P_I)$. 

As another application, we also re-derive the results of \cite{F2} in the finite case. Furthermore, the same cohomological computations used to derive the above imply characterizations of automorphisms of deformations of incidence algebras, and in particular, on automorphisms of incidence algebras obtained by many authors \cite{Bc, DK, Kr, Sp0, Sp, S}. Using the cohomological/topological setup, we provide many examples to illustrate the results.




\vspace{.2cm}

{\it \noindent 2. Representations of incidence algebras.}

\vspace{.2cm}

As another application, we classify the all  thin representations (not necessarily faithful) of an incidence algebra $I(P,\KK)$. This is done in terms of the support (equivalently, annihilator) of such a representation, that is, a subposet $S$ of $P$ over which the representation becomes faithful. Such subposets $S$ of $P$ are precisely those which are closed under subintervals in $P$. The thin representations are then classified by and are in one-to-one correspondence with pairs consisting of closed a subposet $S$, and an element in the cohomology group $H^1(\Delta(S),\KK^\times)$, where $\Delta(S)$ is the topological realization of $S$ (Theorem \ref{t.discls}). 

Since the representations of the incidence algebra admit a monoidal structure (tensor product), we investigate this structure; the set of thin representations ${\mathcal D}$ is closed under tensor products, and we compute these products explicitly in terms of the support and multiplication on $H^1$ (Theorem \ref{t.RRdis}); the semigroup algebra $\ZZ[{\mathcal D}]$ is a particularly interesting and tractable subring of the representation ring. In the case when $P$ has meets $\wedge$ (infimum of any two elements), the Grothendieck group of $I(P,\KK)$ is also a ring, whose structure we compute and show that it is isomorphic to the semigroup algebra $\ZZ[P,\wedge]$. In particular, this provides a natural categorification of any commutative idempotent semigroup as a Grothendieck ring.   

We also note an interpretation of $H^3(\Delta(P),\KK^*)$ as parameterizing deformations of the monoidal category of vector spaces graded by (the partial semigroup of) intervals of $P$, analogue to (deformations) of categories of vector spaces graded by groups or monoids \cite{EGNO}, and we classify these via orbits of $\Aut(P)$. 

\vspace{.2cm}

{\it \noindent 3. Thin representation theory.}

\vspace{.2cm}

One of the main goals in representation theory is to classify interesting classes of representations. We use the methods here to give a complete classification of thin representations over an any arbitrary finite dimensional algebra $A$; this is done in Section \ref{s.thin}. This may well exist in the literature in some form, but we could not locate a reference, and we include it as it is closely related to the previous results and other linear algebra applications we obtain. This is one of our main applications. The classification is done first for nilpotent thin representations of an arbitrary quiver $Q$. In that case, the representations are again completely described by their support - an acyclic subquiver quiver $\Gamma$ of $Q$, and an element in $H^1(\Gamma,\KK^\times)$; they can be uniquely described by choosing a spanning tree $T$ of such quivers $\Gamma$, and placing arbitrary non-zero scalars for arrows in $\Gamma\setminus T$, and zero elsewhere (Theorem \ref{t.thin}). Using this, we derive a simple method to {\bf classify the thin representations of any finite dimensional algebra}, and obtain varieties giving 1-1 parametrizations of thin modules; they will be multilinear varieties. Furthermore we show that any such affine variety defined by multilinear polynomials can be obtained as a suitable variety of thin modules of a finite dimensional algebra.   


\vspace{.2cm}

{\it \noindent 4. Graphs and linear algebra.}

\vspace{.2cm}

In Section \ref{s.graphs}, we consider the following closely related general linear algebra problem: classify (parametrize) the orbits of the action by conjugation of the diagonal subgroup (torus) $Diag_n=Diag_n(\KK)$ of $Gl_n(\KK)$ on $M_n(\KK)$. This is quite possibly known, but, after also discussing with several specialists, we were unable to locate any reference. This is closely related to and may be of interest for problems of graph theory, and labeled graphs; it can be regarded as a ``canonical form problem" for conjugation by (invertible) diagonal matrices, and appears in some applications \cite{CI,gil}. It can also be interpreted as a question about certain thin representations of quivers, as it will be shown. Also, actions of tori are an active subject in itself, with many interesting questions, such as the {\it linearization problem}, which is worth mentioning; one the main open cases of this problem asks whether any algebraic group action of a torus on an affine space is linearizable (equivalent to a linear group representation).  

One associates naturally to any $n\times n$ matrix $A$ a quiver $\Gamma_A$ on $n$ vertices which has an arrow from $i$ to $j$ exactly when $a_{ij}\neq 0$; this is invariant under this action of $Diag_n$. Then, the orbits of this action are parametrized by such oriented graphs on $n$ vertices with each arrow of multiplicity at most 1 (Schurian quivers), together with an element in $H^1(\Gamma_A,\KK^\times)$. We also give an algorithm for finding such a canonical form for conjugation by invertible diagonal matrices. Hence, the answer is very closely related to the other results of the paper; in fact, considering the representation $V_A$ of $\Gamma_A$ naturally given by $A$ by placing the vector space $V_i=\KK$ at each vertex and using $a_{ij}$ for the morphism $V_i\rightarrow V_j$, one sees that the quotient $\KK[\Gamma_A]/{\rm ann}(V)$ of the quiver algebra of $\Gamma_A$ by the annihilator of $V_A$ is isomorphic to the incidence algebra of the quasi ordered set $(\{1,\dots,n\},\preceq)$, where $i\preceq j$ if there is an oriented path from $i$ to $j$, and hence $V_A$ is a thin (and distributive) representation of $\Gamma_A$ (but this is not nilpotent, in general). Moreover, as noted before, any thin representation, and any distributive representation of an acyclic algebra, arises this way.


As a consequence of these results, one may view distributive and thin representations, and the distributive/thin representation theory as the ``combinatorial" part of the representation theory of an algebra $A$. 

\vspace{.2cm}

{\it \noindent 5. The no gap conjecture in representation theory.}

\vspace{.2cm}

As another application, in Section \ref{s.ac}, we give an answer, in a particular case (the ``combinatorial case"), to an interesting conjecture due to Bongartz and Ringel. We recall that accessible modules over an algebra are defined inductively as follows (see \cite{Ri2}). First, simple modules are accessible; then, an indecomposable module $M$ of length $n$ is accessible if it has either an indecomposable submodule or an indecomposable quotient of length $n-1$. Recently, K. Bongartz \cite{Bo} proved the following remarkable result, which can be regarded as a Brauer-Thrall 0 type of result: if a finite dimensional (pointed) algebra $A$ admits a module of length $n$, then it admits indecomposable modules in every length $<n$; with a modification of Bongartz's argument, C. Ringel \cite{Ri2} proved the strengthened result that if $A$ has an indecomposable of length $n$, then it has an accessible module of length $n$. Hence, this motivates a stronger version of this statement as a  conjecture, which we formally list:

\begin{conjecture}[accessibility/no gap conjecture of Bongartz and Ringel]
Over any algebra, any indecomposable module of finite length is accessible.
\end{conjecture}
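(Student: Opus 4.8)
The natural line of attack is induction on the length $\ell(M)$, together with the observation that the conjecture is equivalent to a single \emph{descent step}. Simple modules are accessible by fiat, so assume $n=\ell(M)\ge 2$ and that every indecomposable module of length $<n$ is accessible. Since a submodule (resp.\ quotient) of $M$ of length $n-1$ automatically has simple cokernel (resp.\ kernel), it suffices to prove: \emph{every indecomposable module $M$ with $\ell(M)\ge 2$ admits a simple submodule $S$ with $M/S$ indecomposable, or a maximal submodule $N$ which is indecomposable.} Granting this, the inductive hypothesis makes that sub/quotient accessible, hence $M$ is accessible; and since finite length allows us to replace $A$ by the finite dimensional algebra $A/\ann(M)$, there is no loss in taking $A$ finite dimensional. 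So the whole plan is to establish this one descent step.

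For a general algebra I would first reduce to the \emph{critical} situation in which every maximal submodule of $M$ decomposes and, dually (after applying $\Hom_\KK(-,\KK)$ to pass to $A^{\mathrm{op}}$), every quotient $M/S$ by a simple submodule $S$ decomposes — uniserial modules being immediate, since their unique maximal submodule is uniserial hence indecomposable. For a critical $M$, pick a maximal submodule $N=N_1\oplus N_2$; indecomposability of $M$ forces the class of $0\to N_1\oplus N_2\to M\to M/N\to 0$ in $\Ext^1_A(M/N,N_1\oplus N_2)$ to be non-split in both coordinates, and one would analyze the resulting configuration through Auslander--Reiten theory — the irreducible maps into and out of $M$, the AR translate, and the shape of the AR component containing $M$ — to contradict criticality or to produce the required descent directly. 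I expect this to be the main obstacle; indeed, this descent step is, to my knowledge, precisely the open content of the conjecture, since the cited theorems of Bongartz and Ringel yield only the \emph{existence} of an accessible indecomposable of each length $\le n$, not accessibility of a prescribed one, exactly because controlling all critical indecomposables over an arbitrary algebra is not known. So the realistic target of a complete proof is a class of algebras for which the critical modules can be described.

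The particular case that the machinery of this paper settles is exactly such a class: the thin representations (and, for acyclic $A$, the distributive ones). Here, by Theorem~\ref{t.discls} and Theorem~\ref{t.inc}, one passes to $A/\ann(M)$, which is the incidence algebra $I(P,\KK)$ of a finite poset (or quasi-ordered set) $P$ with $|P|=\ell(M)$, and, after adjusting the combinatorial basis, $M$ becomes the defining representation of $I(P,\KK)$ (the $H^1(\Delta(P),\KK^\times)$-twist carried by $M$ plays no role below); moreover $M$ is indecomposable iff $\Delta(P)$, equivalently the comparability graph of $P$, is connected. The descent step is then purely combinatorial, resting on the lemma that a finite poset $P$ with $|P|\ge 2$ and connected comparability graph has an element $p$, maximal or minimal in $P$, which is not a cut vertex of that graph: take an end block $B$ of the comparability graph, with cut vertex $v$, and choose $p\in V(B)\setminus\{v\}$ that is maximal or minimal in the induced subposet on $B$ (such $p$ exists; all its comparabilities lie inside $B$, so it is extremal in $P$, while it lies in a single block, so it is not a cut vertex of $P$). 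Given such a $p$: if $p$ is maximal, then $P\setminus\{p\}$ is a down-set closed under subintervals, so the span of the basis vectors indexed by $P\setminus\{p\}$ is a submodule, canonically the defining representation of $I(P\setminus\{p\},\KK)$, which is thin with connected simplicial realization, hence indecomposable, of length $n-1$. If $p$ is minimal, the dual construction gives an indecomposable quotient of length $n-1$. Either way the inductive hypothesis applies and $M$ is accessible, proving the conjecture for thin representations over incidence algebras (and, via Theorem~\ref{t.inc}, for distributive representations over acyclic algebras).
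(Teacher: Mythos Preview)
The statement you were given is a \emph{conjecture}, not a theorem: the paper does not prove it (and even quotes Bongartz's remark that it is ``probably false'' in general). You recognize this correctly, and your identification of the descent step---finding an indecomposable maximal submodule or an indecomposable cosimple quotient---as the open content is exactly right. The paper proves only the special case of thin (equivalently, for acyclic $A$, distributive) modules, recorded as Theorem~\ref{t.accessible}, and that is precisely the case your proposal actually settles.

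For that special case your argument is correct and follows the paper's overall architecture: pass to $A/\ann(M)\cong I(P,\KK)$, identify $M$ with the defining representation, note that indecomposability is connectedness of $P$, and reduce accessibility to the combinatorial lemma that a finite connected poset has an extremal (maximal or minimal) element whose removal leaves the comparability graph connected. Where you differ is in the proof of this lemma. The paper argues via the bipartite graph $G$ on $\min(P)\cup\max(P)$ (with an edge $x\text{--}y$ when $x\le y$), takes a spanning tree $T$ of $G$, and removes a leaf of $T$. You instead use the block decomposition of the comparability graph: pick an end block $B$ with (at most one) cut vertex $v$, and choose $p\in V(B)\setminus\{v\}$ extremal in the subposet on $B$; since all comparabilities of such $p$ lie in $B$, it is extremal in $P$ and not a cut vertex. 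Both approaches are short and valid; yours is perhaps slightly more direct (it stays inside the comparability graph rather than passing to the auxiliary min--max bipartite graph), while the paper's has the minor advantage that it produces the desired vertex among the global extrema without any case analysis. Your phrasing should make explicit the degenerate case where the comparability graph has no cut vertex (so $B=P$ and there is no $v$), and the one-line check that $V(B)\setminus\{v\}$ really contains an element extremal in $B$ (if $v$ were simultaneously the unique maximal and unique minimal element of $B$ it would be isolated in the comparability graph on $B$, contradicting connectedness of a block with $\ge 2$ vertices); with those clarifications the proof of the special case is complete and essentially equivalent to the paper's.
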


Hence, the conjecture asserts that the modules realizing the statement in Bongartz's above mentioned result could be chosen by taking successive maximal sub-quotients. We should point out that the author of \cite{Bo} comments that in general this conjecture is ``probably false". The proof of the result of \cite{Bo} is quite deep, and it has two parts: in the first (and shorter) part, the problem is reduced to distributive algebras, and the second considerably more involved part deals with the case of distributive algebras (algebras with finitely many ideals), which can be regarded as the ``combinatorial" part, and is based, in part, on covering theory techniques of Bongartz and Gabriel.

We give here an affirmative answer to the combinatorial case of this conjecture in Theorem \ref{t.accessible}:  we show that if $M$ is an indecomposable {\it distributive} representation over an acyclic finite dimensional basic pointed algebra $A$, of if $M$ is a thin module over an arbitrary algebra, then $M$ is accessible. Our setup of incidence algebras is suitable only for this, and one may possibly be able to avoid it altogether, but we hope that the underlying combinatorics will provide ideas as to how to go to the general (non-acyclic) case, and perhaps the general case for this conjecture. We give such a possible approach by considering ``thin covers" of arbitrary representations over any algebra.  

\subsection{Notations and conventions}

Throughout $P$ will be a {\it finite} poset set (partially ordered set). $\KK$ will be used to denote an infinite field, and $\KK$-algebras will always be finite dimensional, and Schur, that is, $\End(S)=\KK$ for every simple module $S$. By Morita equivalence, we reduce to the case when the algebra is basic (and hence basic pointed); the dictionary is that incidence algebras over posets correspond to basic structural algebras, and incidence algebras over quasi-ordered sets (posets without the symmetry axiom) correspond to arbitrary structural algebras. For $A$-modules $M,N$, the notation $\Hom(M,N)$ will always mean $\Hom_A(M,N)$ unless otherwise stated. We use the standard language of modules and representations interchangeably, and assume basic facts on modules and representation theory found in many textbooks \cite{AF, ASS, Cohn, F, Rotman}, to which we refer. 

In the interest of readability and to avoid a more technical language, as well as because of the connections to representation theory of finite dimensional algebras, we consider here only the case of finite posets $P$, and finite dimensional (incidence) algebras. We note, however, that most of the results that follow can be adapted for arbitrary locally finite posets (or quasi-ordered sets). This involves the use of the incidence coalgebra $\KK P$ (see e.g. \cite{simson}); the incidence algebra in the general case is simply the dual of the incidence coalgebra, and carries a natural topology, the finite topology. All the generalized statements will have a suitable formulation for the topological algebra $I(P,\KK)$ and its category of complete topological modules (also called pseudo-compact modules in the sense of Gabriel; see \cite{DNR}), or equivalently, they can be formulated for incidence coalgebras and their comodules. The general principle is than any comodule is a locally finite object (and {\it rational} module over $I(P,\KK)$), i.e. it is a sum of its finite dimensional subobjects. For coalgebras, the formulation of the results will involve the dual notions (injectives instead of projectives, locally hereditary defined dually with injectives, etc.). The deformations can be defined by analogy; the condition that an object has only finitely many subobjects will make sense to be considered only for finite dimensional comodules (finite dimensional rational $I(P,\KK)$-modules). 

Thus, theorems \ref{t.clsdef}, \ref{t.definc}, \ref{t.discls}, \ref{t.RRdis}, \ref{t.inc} will have corresponding analogues. For example, a deformation of an incidence algebra would be characterized as a (left, equivalently, right) locally hereditary and semidistributive coalgebra (of arbitrary dimension). The condition on finitely many ideals translates into the ``local discrete" condition ``any finite dimensional subcoalgebra has only finitely many subcoalgebras". The classification of deformations via orbits on cohomology, distributive representations, and considerations on categorical structures carry over accordingly. The faithful condition of Theorem \ref{t.inc} translates either to the existence of a complete topological faithful distributive module, or to the existence a faithful (or rather, cofaithful) distributive rational module. The results can then apply to recover the general statements in literature. 
We leave these extensions to the interested reader, or to a future follow-up.


\section{Modules with finitely many orbits}\label{s.fin}

\noindent The purpose of this section is to prove a general structure theorem for modules with finitely many orbits under the action of the group of units of the ring. This theorem is a generalization of the structure theorem for rings $R$ having finitely many orbits under the left regular action of $U(R)$, which is the main result of \cite{Hirano}. To fix terminology, we introduce

\begin{definition}
We say that an $R$-module as finitely many orbits if the action of the group of units $U(R)$ of $R$ on $M$ has finitely many orbits.
\end{definition}

The main theorem of this section is the following.

\begin{theorem}\label{t1} 
Let $R$ be a ring, and consider the action of the group of units $U(R)$ on $M$. Consider the following conditions for a  module $M$.
\begin{enumerate}
\item $M$ has finitely many orbits.
\item $M$ has finitely many submodules.
\item $M$ has finite length and has no subfactor isomorphic to $T\oplus T$, where $T$ is a simple left $R$-module with infinite endomorphism ring $\End_R(T)$.
\end{enumerate}
\noindent Then the following implications hold: $(1)\Rightarrow (2)\Leftrightarrow (3)$. Moreover, if $R$ is a semilocal ring, then $(3)\Rightarrow (1)$.
\end{theorem}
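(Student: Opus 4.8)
My plan is to establish the implications in the order $(1)\Rightarrow(2)$, $(2)\Leftrightarrow(3)$, and then $(3)\Rightarrow(1)$ under the semilocal hypothesis, the last being the substantive part. The implication $(1)\Rightarrow(2)$ is immediate: every submodule $N\subseteq M$ is stable under the $R$-action, in particular under multiplication by units, hence is a union of $U(R)$-orbits; with finitely many orbits there are only finitely many such unions, so finitely many submodules. For $(2)\Rightarrow(3)$: finitely many submodules forces the ACC and DCC on submodules, so $M$ has finite length; and if some subquotient $N/L$ were isomorphic to $T\oplus T$ with $D=\End_R(T)$ infinite, the ``graph'' submodules $\{(t,\varphi(t)):t\in T\}$, $\varphi\in D$, would give infinitely many submodules of $N/L$, which by the correspondence theorem lift to infinitely many submodules of $M$ --- a contradiction.

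For $(3)\Rightarrow(2)$ I would induct on $\ell(M)$, choosing a maximal submodule $M_1$, so that $M/M_1\cong S$ is simple and $M_1$ inherits $(3)$. By induction $M_1$ has finitely many submodules, accounting for all submodules of $M$ contained in $M_1$. Any submodule $N\not\subseteq M_1$ satisfies $N+M_1=M$, hence is determined by $K:=N\cap M_1$ (finitely many choices, again by induction on $M_1$) together with a complement of $M_1/K$ in $M/K$; the set of such complements, when nonempty, is a torsor over $\Hom_R(S,M_1/K)=\Hom_R(S,\operatorname{soc}(M_1/K))$, which is a finite power $D^e$ of $D=\End_R(S)$. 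If this were infinite, then $D$ is infinite and $e\ge 1$, so $\operatorname{soc}(M_1/K)$ contains a copy $S'$ of $S$; together with a complement $\bar N\cong S$ (which meets $M_1/K$, hence $S'$, in $0$) this produces a subquotient $\bar N\oplus S'\cong S\oplus S$ of $M$ with $\End(S)$ infinite, contradicting $(3)$. Hence each $K$ contributes finitely many $N$, so $M$ has finitely many submodules. (This simultaneously recovers the classical description of distributive modules.)

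The heart of the theorem is $(3)\Rightarrow(1)$ for semilocal $R$. By what precedes it suffices to prove: \emph{over a semilocal ring, all generators of any cyclic submodule $C\subseteq M$ lie in a single $U(R)$-orbit}. Granting this, the assignment $m\mapsto Rm$ exhibits $M$ as the disjoint union of the generator-sets of its cyclic submodules, each a single orbit; since $(3)$ already gives finitely many submodules there are finitely many cyclic ones, hence finitely many orbits. To prove the italicized claim, write $C\cong R/\ann(m_0)$ for a fixed generator $m_0$; a second generator corresponds to $s+\ann(m_0)$ with $Rs+\ann(m_0)=R$, and one must produce a unit in the coset $s+\ann(m_0)$. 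Reducing modulo $J=J(R)$ and using the standard surjection $U(R)\twoheadrightarrow U(R/J)$, it is enough to find a unit in the corresponding coset of the semisimple ring $R/J=\prod_i M_{n_i}(D_i)$; identifying the left ideals of $M_n(D)$ with subspaces via row spaces, the generating hypothesis becomes $\operatorname{rowspace}(\bar s)+W_i=D_i^{n_i}$ in each factor, and a short row-by-row selection argument produces an invertible matrix in the required coset. One then lifts back and corrects by an element of $J$ --- which preserves being a unit --- to land in $s+\ann(m_0)$ exactly.

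I expect the main obstacle to be this last Key Lemma. One must check that passing to $R/J$ loses nothing on either side: surjectivity of $U(R)\to U(R/J)$ and the $J$-correction trick take care of this, while the hypothesis $\operatorname{rowspace}(\bar s)+W=D^n$ is precisely what lets one extend a partial choice of rows to a full basis over each simple factor $M_{n_i}(D_i)$. The remaining steps --- the length induction, the complement-counting, and the applications of the correspondence theorem --- are routine bookkeeping.
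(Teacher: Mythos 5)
Your proof is correct, and its skeleton matches the paper's: the trivial union-of-orbits argument for $(1)\Rightarrow(2)$, the complements-of-$T$-in-$T\oplus T$ (graph) count for $(2)\Rightarrow(3)$, a length induction with complement counting for $(3)\Rightarrow(2)$, and the reduction of the semilocal direction to the key lemma that two generators of the same cyclic submodule lie in one $U(R)$-orbit. The two places you diverge are worth noting. For $(3)\Rightarrow(2)$ you strip off a maximal submodule $M_1$ and count the submodules $N\not\subseteq M_1$ with $N\cap M_1=K$ as a torsor over $\Hom_R(S,M_1/K)\cong\End(S)^{e}$, whereas the paper inducts via the socle (the lattices $L_S(M)\cong L(M/S)$ for simple $S$ in the socle, with a separate semisimple case); both rest on the same complements-versus-$\Hom$ correspondence (the paper's Remark 2.7), and your top-down version is, if anything, a single uniform induction. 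For $(3)\Rightarrow(1)$ the paper obtains the needed unit in the coset $b+\ann(y)$ by citing Bass's Lemma 6.4 (semilocal rings have stable range one), while you prove that unit-lifting fact from scratch: pass to $R/J(R)$, use that any element congruent to a unit modulo $J(R)$ is a unit, and in each factor $M_{n_i}(D_i)$ identify left ideals with row spaces and build an invertible matrix in the prescribed coset. That last row-by-row step is only sketched, but it is sound: keep those rows of $\bar s$ whose images modulo $W$ form a basis of $D^{n}/W$, and since $W$ surjects onto the quotient by the span $U_0$ of the kept rows, adjust each remaining row within its $W$-coset so the images complete a basis modulo $U_0$. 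So your route buys a self-contained proof of the Bass-type ingredient at the cost of some matrix bookkeeping, while the paper's citation keeps that direction to a few lines.
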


We note there is a close connection with a well studied and important notion in literature, namely, distributive modules. Distributivity is, of course, a classical subject that goes back to Dedekind and Pr$\rm\ddot{u}$fer domains (recall that one characterization of Pr$\rm\ddot{u}$fer domains is that they are distributive commutative domains); distributive modules have been extensively studied in the early 1970's (\cite[Chapter 3]{Cohn}, \cite{Stephenson}, \cite{Camillo}) and several characterizations and many results have been found for such modules; we mention the survey \cite{MT} and textbook \cite{T} and the many references therein. We provide here two apparently new results that describe arbitrary distributive modules.

The connection to our theorem is given by the result of V. Camillo \cite{Camillo} showing that a module $M$ is distributive if and only if all its semisimple subqotients are squarefree, that is, $M$ does not contain a subquotient isomorphic to $T\oplus T$ for some simple module $T$. In general, we say that a finite dimensional module (representation) $M$ is {\it squarefree} or {\it thin} if the multiplicity $[M:S]$ of every simple module $S$ in the composition series of $M$ is at most $1$. While Camillo's result holds in general, for convenience and self-containment, we recall the reader that for a semilocal ring $R$ this is easily observed by localization: if $e_1,\dots,e_n$ is a set of primitive orthogonal idempotents, by the exactness of the localizations $M\rightarrow e_iM$ and since $M=\bigoplus e_iM$, it is straightforward to note that an $R$-module $M$ is distributive if and only if $e_iM$ are distributive over the rings $e_iRe_i$; this, in turn, is obviously equivalent to $e_iM$ being uniserial over the rings $e_iRe_i$ (because $e_iRe_i$ are local). This condition is readily translated to the squarefree condition on $M$.

Before proving Theorem \ref{t1}, we note a few immediate consequences. 

\begin{corollary}
Let $A$ be a $\KK$-algebra, where $\KK$ is an infinite field. Then a left $A$-module $M$ has finitely submodules if and only it is a distributive module; furthermore, if $A$ is semilocal, this is equivalent to $M$ having finitely many orbits.
\end{corollary}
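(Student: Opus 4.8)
The plan is to derive this directly from Theorem \ref{t1}, using only the last clause of that theorem together with Camillo's characterization of distributive modules recalled in the text. First I would invoke the implications $(1)\Rightarrow(2)\Leftrightarrow(3)$ of Theorem \ref{t1}, which hold for an arbitrary ring, and then note that when $\KK$ is an infinite field and $A$ is a $\KK$-algebra, every simple $A$-module $T$ has endomorphism ring $\End_A(T)$ which is a division $\KK$-algebra, hence contains $\KK$ and is therefore infinite. Consequently, in condition (3) the phrase ``simple module with infinite endomorphism ring'' is vacuous as a restriction: \emph{every} simple $A$-module has infinite endomorphism ring. So (3) simplifies to the statement that $M$ has finite length and no subquotient isomorphic to $T\oplus T$ for \emph{any} simple module $T$ — equivalently, every semisimple subquotient of $M$ is squarefree.

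The next step is to identify this simplified form of (3) with distributivity. By Camillo's theorem \cite{Camillo} (stated in the excerpt), $M$ is distributive if and only if every semisimple subquotient of $M$ is squarefree. One small point to address: Camillo's criterion characterizes distributive modules among all modules, but a distributive module need not a priori have finite length; however, if $A$ is finite dimensional over $\KK$ then every finitely generated $A$-module already has finite length, and for the statement ``$M$ has finitely many submodules $\Leftrightarrow$ $M$ is distributive'' one checks that having finitely many submodules forces finite length (a module of infinite length has an infinite ascending or descending chain of submodules), so the finite-length hypothesis in (3) is automatic on the relevant side and matches Camillo's condition on the other. Thus $(2)\Leftrightarrow(3)\Leftrightarrow$ ``$M$ is distributive'' for left $A$-modules over a finite dimensional algebra over an infinite field, giving the first equivalence claimed.

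Finally, for the last clause I would add the hypothesis that $A$ is semilocal and apply the remaining implication $(3)\Rightarrow(1)$ of Theorem \ref{t1}, which closes the cycle $(1)\Leftrightarrow(2)\Leftrightarrow(3)$, so that all three — finitely many orbits, finitely many submodules, distributive — become equivalent. I do not anticipate a genuine obstacle here: the corollary is essentially a specialization of Theorem \ref{t1}, and the only thing requiring care is the observation that over an infinite field the endomorphism-ring-infinite hypothesis in (3) is automatically satisfied by every simple module, which is exactly what collapses condition (3) to plain squarefreeness of semisimple subquotients and hence, via Camillo, to distributivity.
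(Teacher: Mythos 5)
Your argument is essentially the paper's own: over an infinite field every simple module has infinite endomorphism ring (being a nonzero $\KK$-vector space), so condition (3) of Theorem \ref{t1} reduces to the squarefree criterion, which by Camillo's theorem is exactly distributivity, and the semilocal case then closes the cycle via $(3)\Rightarrow(1)$. Your extra care about the finite-length issue is a reasonable (and harmless) elaboration of what the paper dismisses as obvious.
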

\begin{proof}
Obvious, since in this case for every simple module $T$, the $\KK$-vector space $\End(T)$ is an infinite set, and condition Theorem \ref{t1}(3) is precisely the squarefree test for distributivity. 
\end{proof}


\begin{corollary}
Let $R$ be a left artinian ring and $M$ a module which has no nonzero finite (as a set) factor module. Then $M$ has finitely many orbits if and only if $M$ has finitely many submodules, if and only if it is left artinian and distributive.
\end{corollary}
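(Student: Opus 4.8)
The plan is to derive the statement from Theorem~\ref{t1}, from Camillo's squarefree criterion \cite{Camillo}, and from one homological fact special to left artinian rings.

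Since a left artinian ring is semilocal, Theorem~\ref{t1} already yields $(1)\Leftrightarrow(2)\Leftrightarrow(3)$ for $M$; so it remains to prove that, under the hypothesis that $M$ has no nonzero finite factor module, condition $(3)$ is equivalent to ``$M$ is artinian and distributive''. For the implication ``$\Leftarrow$'' (which does not use the hypothesis on finite quotients) I would argue: if $M$ is distributive then, writing $1=e_1+\dots+e_n$ with the $e_i$ primitive orthogonal idempotents, each $e_iM$ is a distributive --- hence uniserial --- module over the local ring $e_iRe_i$, and since $J(e_iRe_i)$ is nilpotent a uniserial module over it is of finite length; thus each $e_iM$ is finitely generated over $e_iRe_i$, so $M$ is finitely generated over $R$ and therefore of finite length, in particular artinian. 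Camillo's criterion then gives $(3)$, as a distributive module has no subquotient isomorphic to $T\oplus T$ for any simple $T$.

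The substance is the converse. Assume $(3)$; then $M$ has finite length, so it is artinian, and by Camillo's criterion it suffices to exclude a subquotient isomorphic to $T\oplus T$ for every simple $T$. By $(3)$ this is automatic when $\End_R(T)$ is infinite, so let $\End_R(T)$ be finite; over a left artinian ring this is then a finite division ring, i.e.\ a finite field, and since $R/\ann_R(T)\cong M_k(\End_R(T))$ the simple module $T$ is a finite set. I claim that \emph{no} finite simple module occurs as a composition factor of $M$, which in particular rules out such a subquotient. This I would prove in two steps. First, for a finite length module over a left artinian ring, ``$M$ has no nonzero finite factor module'' is equivalent to ``every simple summand of the top $M/\mathrm{rad}\,M$ is infinite'': a finite summand of the top yields a finite quotient of $M$, while conversely a nonzero finite quotient of $M$ has a finite simple quotient, which is a quotient of $M$ and therefore a summand of $M/\mathrm{rad}\,M$. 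Second (the crux): over a left artinian ring, a finite length module whose top $M/\mathrm{rad}\,M$ is a sum of infinite simples has no finite composition factor. I would prove this by induction on length, reducing through the radical filtration to the case $\mathrm{rad}^2M=0$, where a pushout of $0\to\mathrm{rad}\,M\to M\to M/\mathrm{rad}\,M\to0$ along the projection of the semisimple module $\mathrm{rad}\,M=\mathrm{soc}\,M$ onto a hypothetical finite summand $S$, together with $\Ext^1_R(M/\mathrm{rad}\,M,S)=0$, would exhibit $S$ as a quotient of $M$ --- hence a summand of the all-infinite top --- a contradiction. The Ext group vanishes because $\Ext^1_R(S',S)=0$ for every infinite simple $S'$ and finite simple $S$: using $S\not\cong S'$ one has $\Ext^1_R(S',S)\cong\Hom_{R/J}\big((J/J^2)e_{S'},S\big)$, the $R/J$-module $(J/J^2)e_{S'}$ is finitely generated (since $R$ is left noetherian) and semisimple, and it carries a unital right action of the division ring $e_{S'}(R/J)e_{S'}$, which is infinite because $S'$ is; if $S$ occurred in it with multiplicity $m\ge1$, restriction to the $S$-isotypic component would give a unital ring homomorphism from this infinite division ring into $\End_{R/J}(S^{\oplus m})\cong M_m(\End_R(S))$, a finite ring --- impossible.

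I expect the second step, and precisely the $\Ext$-vanishing, to be the only real obstacle. The vanishing is genuinely one-sided: over a left artinian ring $\Ext^1_R(\text{finite simple},\text{infinite simple})$ need not vanish, so the proof must use left (not right) finiteness exactly where it invokes that $J/J^2$ is finitely generated as a left $R/J$-module, and a small amount of care is needed to check that the isotypic component really inherits a \emph{unital} module structure over the relevant endomorphism division ring. The remaining ingredients --- the reduction via Theorem~\ref{t1}, the localization computing finite length, and the radical-series induction --- are routine. I would also remark that ``artinian'' is redundant in the statement: over a left artinian ring it follows both from distributivity and from $(3)$, and is kept only for parallelism with the hypotheses.
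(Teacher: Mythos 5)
Your argument is correct, and its overall skeleton is the paper's: reduce everything to Theorem \ref{t1} (the ring is semilocal) together with Camillo's criterion, by showing that the hypothesis on finite factor modules forces every composition factor of $M$ to be infinite, so that condition (3) of Theorem \ref{t1} becomes exactly the squarefree test for distributivity; both arguments ultimately rest on the same key vanishing $\Ext^1_R(S',S)=0$ for $S'$ an infinite simple and $S$ a finite simple over a left artinian ring. Where you differ is in the execution of that key step. The paper takes a projective cover $\bigoplus_i P_i\twoheadrightarrow M$ by local projectives, observes that each $\mathrm{top}(P_i)$ is a quotient of $M$ and hence infinite, and invokes the standard fact that $[P_i:S]\geq 1$ yields a path in the Ext quiver from $\mathrm{top}(P_i)$ to $S$, so some arrow would pass from an infinite to a finite simple; the vanishing itself is quoted from \cite{Iovanov} (and the path fact from \cite{Far}). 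You instead run a top-down induction along the radical filtration, where in the $\mathrm{rad}^2M=0$ case the pushout of $0\to\mathrm{rad}\,M\to M\to M/\mathrm{rad}\,M\to 0$ along the projection onto a hypothetical finite summand splits and exhibits a finite simple quotient of $M$ (only semisimplicity of $\mathrm{rad}\,M$ is needed there, not $\mathrm{rad}\,M=\mathrm{soc}\,M$), and you prove the Ext-vanishing from scratch via $\Ext^1_R(S',S)\cong\Hom_{R/J}\bigl((J/J^2)e_{S'},S\bigr)$ and the impossibility of a unital embedding of the infinite division ring $e_{S'}(R/J)e_{S'}$ into the finite ring $M_m(\End_R(S))$ --- in effect a self-contained proof of the special case of \cite{Iovanov} that the paper cites. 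Your route buys self-containment at the cost of length; the paper's is shorter by citation and makes the Ext-quiver combinatorics explicit. Your handling of the easy direction (finite length from distributivity via uniseriality of the corner modules $e_iM$ over the local artinian rings $e_iRe_i$) and your closing remark that ``artinian'' is essentially redundant are both consistent with the paper's localization remark following Theorem \ref{t1}.
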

\begin{proof}
First, we show that if $M$ is artinian with no nonzero {\it finite quotient} module, then $M$ has no nonzero finite {\it subquotient} either. Assume the contrary, and then we may assume there is a finite simple subfactor  $S$ of $M$. 
Let $\bigoplus\limits_{i=1}^k P_i\rightarrow M\rightarrow 0$ be an projective cover epimorphism with projective local (indecomposable) modules $P_i$. Since $S$ shows up as a subquotient in a Jordan-H$\rm\ddot{o}$lder series of $M$, it shows up as a subquotient of some $P_i$. Let $T=top(P_i)$; then obviously $T$ is a quotient of $M$ too (by the cover condition), so $T$ must be infinite. Since $[P_i:S]\geq 1$, there is a (possibly length 0) path $top(P_i)=T=T_1\rightarrow T_2\rightarrow \dots\rightarrow T_n=S$ in the Ext quiver of $R$ (this is fairly well known; see, for example, \cite[Lemma 1]{Far}). Since $T_1$ is infinite, $T_n$ is finite, there is some arrow that makes a transition from an infinite to a finite $T_i$, i.e. ${\rm Ext}^1(T_i,T_{i+1})\neq 0$ for $T_i$ infinite and $T_{i+1}$ finite. But this is not possible when $R$ is left artinian, as shown, for example, in \cite[Proposition 2.1]{Iovanov}. \\
Now, notice that since $R$ is left artinian, $R/Jac(R)$ is semisimple and so each simple module $S$ is a finite dimensional vector space over $\End(S)$; therefore, $S$ is finite if and only if $\End(S)$ is finite. Hence, the hypothesis implies  that any simple subquotient of $M$ has infinite endomorphism ring, and so, the condition in Theorem \ref{t1}(3) is here equivalent to distributivity. Since $R$ is also semilocal, the triple equivalence follows from the Theorem.  
\end{proof}


\begin{corollary}
Let $A$ be an Artin algebra, and $M$ an artinian $A$-module. Then $M$ decomposes as $M=M_f\oplus M_t$ with $M_f$ finite and $M_t$ such that all its simple subquotients are infinite. Moreover, the following are equivalent:\\
(i) $M$ has finitely many orbits.\\ 
(ii) $M_t$ has finitely many orbits.\\
(iii) $M$ has finitely many submodules.\\
(iv) $M_t$ has finitely many submodules.\\
(v) $M$ is a direct sum of a finite module and an artinian distributive module.
\end{corollary}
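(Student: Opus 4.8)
The plan is to reduce everything to Theorem \ref{t1} together with the preceding corollary, so the real content is the decomposition $M = M_f \oplus M_t$ and checking that the two ``local'' pieces do not interact. First I would observe that over an Artin algebra a simple module $S$ is finite (as a set) if and only if $\End(S)$ is finite, since $R/\mathrm{Jac}(R)$ is semisimple and $S$ is finite-dimensional over its endomorphism division ring; thus the hypotheses of the corollary match the setup of Theorem \ref{t1}(3) precisely (``infinite $\End$'' $=$ ``infinite as a set''). Next, since $M$ is artinian over an Artin algebra it has finite length, hence finitely many isomorphism classes of composition factors; I would split the central idempotent of $R/\mathrm{rad}$, or more directly partition the set of primitive idempotents $e_1,\dots,e_n$ into those with $eR/eJ$ finite and those with $eR/eJ$ infinite. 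The hard part here is to show the induced block decomposition of $R$ actually splits $M$; this uses that there are no extensions between a finite simple and an infinite simple over an Artin algebra — exactly the fact already invoked in the previous corollary's proof via \cite[Proposition 2.1]{Iovanov}. Granting that, $\mathrm{Ext}^1$ vanishes between the ``finite'' and ``infinite'' simples, so the filtration of $M$ by these two classes of composition factors splits, giving $M = M_f \oplus M_t$ with $M_f$ finite (a finite-length module all of whose composition factors are finite sets, hence a finite set) and every composition factor of $M_t$ infinite.

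With the decomposition in hand, the equivalences are largely bookkeeping. For $(i)\Leftrightarrow(ii)$: $U(R)$ acts diagonally with respect to $M = M_f \oplus M_t$ (since this is a decomposition into $R$-submodules corresponding to a block decomposition of $R$), so the orbit count of $M$ is the product of the orbit counts of $M_f$ and $M_t$; because $M_f$ is a finite set it has only finitely many orbits automatically, whence $M$ has finitely many orbits iff $M_t$ does. The same argument gives $(iii)\Leftrightarrow(iv)$, using that the submodule lattice of a direct sum of modules with no common composition factor is the product of the two submodule lattices (here one needs $M_f$ and $M_t$ to have disjoint composition factors, which holds by construction), and $M_f$ being finite has only finitely many submodules. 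For $(iv)\Leftrightarrow(ii)$: apply Theorem \ref{t1} to $M_t$ over the semilocal (indeed Artin) ring $R$ — every composition factor of $M_t$ is infinite, so condition (3) of that theorem reads simply ``$M_t$ is squarefree'', i.e. distributive, and since $R$ is semilocal all three conditions $(1),(2),(3)$ of Theorem \ref{t1} are equivalent for $M_t$; this yields $(ii)\Leftrightarrow(iv)$ and also that each is equivalent to $M_t$ being artinian distributive. Finally $(v)$: we have just seen $(iv)$ is equivalent to $M_t$ being distributive (and $M_t$ is artinian as a summand of the artinian module $M$), and $M_f$ is finite, so $(iii)\Leftrightarrow(v)$ follows from $(iii)\Leftrightarrow(iv)$; conversely if $M$ is a sum of a finite module and an artinian distributive module, then after re-reading the decomposition $M=M_f\oplus M_t$ one checks the distributive summand must absorb into $M_t$ up to the finite part, so $M_t$ is distributive.

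The single genuine obstacle is the splitting $M = M_f \oplus M_t$: one must be careful that it is not merely a filtration but an honest direct sum, and that it is canonical enough that $U(R)$ respects it. The clean way is to note that the set of primitive central idempotents of $R$ splits into those ``supported on finite simples'' and the rest only after passing to the basic algebra / using that an indecomposable projective cannot have both a finite and an infinite composition factor — and this last point is precisely the $\mathrm{Ext}^1$-vanishing statement from \cite[Proposition 2.1]{Iovanov} already used above. Once that is granted, $e_f := \sum_{e_i \text{ finite}} e_i$ is a (sum of) central idempotent(s) relative to this block structure, $M_f = e_f M$, $M_t = (1-e_f)M$, these are $R$-submodules preserved by every unit of $R$, and the rest is the product-of-lattices / product-of-orbit-sets argument together with a single application of Theorem \ref{t1} to $M_t$.
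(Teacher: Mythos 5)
Your overall strategy is the one the paper uses: split $M$ into a ``finite'' part and an ``infinite'' part coming from a block decomposition of $A$, and feed the infinite part to Theorem \ref{t1} (the paper routes this through the immediately preceding corollary, which amounts to the same thing). The bookkeeping for (i)--(v) -- product of orbit sets under $U(A_f)\times U(A_t)$, product of submodule lattices for summands with disjoint composition factors, and the absorption argument showing that in (v) the distributive summand accounts for $M_t$ -- is all correct.

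The one genuine problem is your justification of the splitting $M=M_f\oplus M_t$, which you yourself identify as the crux. You invoke \cite[Proposition 2.1]{Iovanov} as saying there are ``no extensions between a finite simple and an infinite simple.'' But that proposition, as quoted in the paper, says only that $\Ext^1(K,L)\neq 0$ forces $|K|\leq |L|$ or both finite; i.e.\ it kills $\Ext^1(K,L)$ only when $K$ is infinite and $L$ is finite. This lets you push the finite composition factors to the top and obtain a filtration $0\to M_t\to M\to M_f\to 0$, but splitting that filtration requires the vanishing in the \emph{other} direction, $\Ext^1(T,S)=0$ for $T$ finite and $S$ infinite -- a case not excluded by $|T|\leq |S|$. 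The same one-sided vanishing is all you offer for the centrality of $e_f$, so the gap persists in your ``clean way'' as well. The paper avoids this by citing the ring decomposition $A=A_f\oplus A_t$ from \cite[Section 2]{Iovanov}, which for Artin algebras holds for a reason specific to that setting: every simple is a finite-dimensional vector space over a residue field of the commutative artinian base ring, so finite and infinite simples lie over different central idempotents of the base ring and hence in different blocks, giving Ext-vanishing in both directions. With that substitution (or any honest proof of the two-sided vanishing), the rest of your argument goes through.
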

\begin{proof}
The decomposition of $M$ as in the statement follows from \cite[Section 2]{Iovanov}, as $A=A_f\oplus A_t$, a direct sum of a finite ring $A_f$ and an Artin algebra with only infinite simple modules. For the $A_t$-module $M_t$ the condition of the previous Corollary holds, and everything follows from there. 
\end{proof}

\subsection*{Proof of the theorem}
 
To prove  Theorem \ref{t1}, we need a few observations. 

\begin{definition}  Let $M$ be a left $R$-module and $N_1, N_2,\dots, N_n$ be submodules of $M$. We say that $M=N_1\cup N_2\cup\dots\cup  N_n$ is an efficient union \cite{Gottlieb} if none of the $N_i$ can be omitted from the union.
\end{definition}

\begin{remark}\label{r1}
(1) Let $M$ be an $R$-module with finitely many orbits $U(R)x_i$, $i=1,\dots,n$. Then  $M=\bigcup\limits_{i=1}^n U(R)x_i\subseteq \bigcup\limits Rx_i\subseteq M$, and this union of submodules can be made efficient by eliminating some of the $Rx_i$'s. Moreover, every submodule $N$ of $M$ is also a module with finitely many orbits, since $N$ is closed under the action of $U(R)$ (hence it is a union of orbits among $\{U(R)x_i\}_i$), and so is (obviously) $M/N$. In particular, this shows that $M$ has finitely many submodules (since only finitely many unions can be constructed with the sets $U(R)x_i$), and this proves (1)$\Rightarrow$(2) in Theorem \ref{t1}. Also, $M$ is artinian of finite length.  \\
(2) Hence, if $N$ be a submodule of $M$, then  $N$, $M/N$ and also any subquotient of $M$, have finitely many orbits, but the converse is not true as will be evident in what follows. 
\end{remark}



The next remark is a standard folklore fact pointed out to us by Victor Camillo, and we include it for convenience. 

\begin{remark}\label{r2}
(1) Let $R$ be a ring and let $A,B$ be two left $R$-module. Consider the left $R$-module $A\oplus B$ and fix $\sigma_A,\sigma_B$ and $\pi_A,\pi_B$ the canonical injections and projections. Then there is a one-to-one bijection between $\Hom(A,B)$ and the set $C(B)$ of complements of $B$ in $A\oplus B$ given as follows. First, each $X\in C(B)$ is given uniquely by a morphism $h:A\oplus B\rightarrow B$ which is a retract of $\sigma_B$ (so which splits the sequence $0\rightarrow B\stackrel{\sigma_B}{\longrightarrow} A\oplus B\stackrel{\pi_A}{\longrightarrow} A\rightarrow 0$) - this is realized by letting $X=\ker(h)\in C(B)$. Furthermore, any such morphism $h$ with $h\circ \sigma_B={\rm Id}_B$ (i.e. $h\vert_B ={\rm Id}_B$) is  uniquely determined by $h\vert_A=h\circ \sigma_A\in\Hom(A,B)$.\\
(2) This shows that $\Hom(A,B)$ is finite if and only if $B$ has finitely many complements in $A\oplus B$. In particular, if $T$ is any module with infinite endomorphism ring, then $T\oplus T$ (and $T^n$ for $n\geq 2$) has infinitely many submodules (when $T$ is simple this is an if and only if statement). 
\end{remark}

\noindent\textbf{Proof of  Theorem \ref{t1}.} $(1)\Rightarrow (2)$ by Remark \ref{r1}.

\noindent $(2)\Rightarrow (3)$ Suppose $M$ has finitely many submodules; obviously, $M$ has finite length. If $M$ has a subquotient isomorphic to $T\oplus T$ with $T$ simple and $\textrm{End}_R(T)$ infinite, then  by Remark \ref{r2}, $T\oplus T$ has infinitely many submodules; but lifting back to $M$ this would produce infinitely many submodules of $M$. 

\noindent $(3)\Rightarrow (2)$  
We proceed by induction on the length $l(M)$ of $M$; when $l(M)=1$ it is obvious. For an $R$-module $Y$ and submodule $X$ of $Y$, let $L(Y)$ and $L_X(Y)$ denote the lattice of submodules of $Y$ and of submodules of $Y$ containing $X$, respectively. Let $\Sigma$ be the socle of $M$. Then $L(M)=\{0\}\cup \bigcup\limits_{S\subseteq \Sigma;\, S{\rm-simple}}L_S(M)$. Since each $L_S(M)=L(M/S)$ is finite by induction, if $\Sigma\neq M$ then the proof is finished as $\Sigma$ will have only finitely many submodules. Otherwise, $M$ is semisimple and let $S\subsetneq M$ be simple ($l(M)>1$). Then $L(M)=L_S(M)\cup (L(M)\setminus L_S(M))$, $L_S(M)$ is finite, and every submodule of $M$ which does not contain $S$, is contained in a complement $H$ of $S$ by semisimplicity. Each such complement has finitely many submodules (by induction), and there are only finitely many complements since their number equals $\alpha=|\Hom_R(S,M/S)|=|\End(S)^{n-1}|$ where $n$ is the multiplicity of $S$ in $M$; when $n=1$, $\alpha=|\{0\}|=1$ and when $n\geq 2$, then $S\oplus S\subseteq M$, by the hypothesis $\End_R(S)$ can only be finite, so $\alpha<\infty$. Thus $M$ has finitely many submodules.

\noindent $(2)\Rightarrow (1)$ 
We note that over a semilocal ring if two elements generate the same submodule then they are in the same orbit of $U(R)$, which will end the proof. This is essentially an observation of H. Bass:  if $x,y\in M$ such that $Rx=Ry$, then there exist $a,b\in R$ such that $y=ax$ and $x=b y$. Thus $(1-ab)\in\textrm{ann}(y)$ where $\textrm{ann}(y)$, the left annihilator of $y$. This implies $1=ab+c$ for some  $c\in\textrm{ann}(y)$, so $Rb+\textrm{ann}(y)=R$. By Lemma 6.4 in \cite{Bass}, we have that $(b+\textrm{ann}(y))\cap U(R)\neq\emptyset$. Let $u\in (b+\textrm{ann}(y))\cap U(R)$. Then $(b-u)y=0$, so that $x=by=uy$, and hence, $U(R)x=U(R)y$. 
\defqed

\begin{remark} The observation of Bass in the last part of the proof above seems to exist in other places;  for example, it appears to have been rediscovered in \cite[Lemma 1 and Lemma 2]{OR}, in a context also related to finitely many orbits for certain actions. W. Nicholson (private communication) has used similar arguments in recent work. 
\end{remark}

In the last part of the proof of this Theorem, we use the fact that the ring $R$ is
semilocal. We give below an example to show that if this is not true, then having finitely many submodules does not necessary imply that $M$ has finitely many orbits under the regular action.

\begin{example} Consider $R=\mathbb{C}\langle x,y\rangle$. Then $R$ is not semilocal.  Now,  let $V$ be a 2-dimensional vector space regarded as column vectors, with the $R$-module structure with $x$ and $y$ acting as the  matrices $A=\left[\begin{array}{cc}
1&0\\
1&1
\end{array}\right]$ and 
$B=\left[\begin{array}{cc}
1&1\\
0&1
\end{array}\right]$, respectively.  Then $V$ is simple since $A$ and $B$ do not have a common eigenvector. The group of units of $R$ is $\mathbb{C}$. Thus, for every non-zero $v\in V$, we have that $\mathbb{C}v$ is an orbit of $V$ under the regular action, and so $V$ has infinitely many orbits.
\end{example}

\section{Applications}\label{s.3}

In this section we give two applications of general nature of the result of Theorem \ref{t1}.

\subsection{Rings with finitely many one sided ideals or orbits}

\noindent  We apply Theorem \ref{t1} to the case where our module is the regular module, and obtain a new short proof of the structure theorem for rings with finitely many orbits under the left regular action of the group of units.


\begin{proposition}
Let $R$ be a ring with finitely many left ideals. \\
(i) Suppose $T,S$ are non-isomorphic simple $R$-modules and $S$ is infinite. Then $\Ext^1_R(T,S)=0$.\\
(ii) Moreover, if $S$ is infinite and $T$ is finite, then $\Ext^1(S,T)=0$.
\end{proposition}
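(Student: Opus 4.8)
The plan is to reduce both statements to the existence of certain small submodules that come in infinite families whenever a relevant $\Ext^1$ is nonzero, contradicting the hypothesis that $R$ has only finitely many left ideals. Note first that having finitely many left ideals is exactly condition (2) of Theorem~\ref{t1} applied to the regular module $_RR$; hence by the theorem $R$ has finite length on the left, and moreover no subquotient of $_RR$ is isomorphic to $T'\oplus T'$ for a simple $T'$ with infinite endomorphism ring. In particular, every simple left $R$-module appears as a subquotient of $_RR$, so this squarefree condition is a global statement about simple $R$-modules.

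For part (i), suppose $\Ext^1_R(T,S)\neq 0$ with $T,S$ non-isomorphic simple and $S$ infinite. Then there is a non-split extension $0\to S\to E\to T\to 0$, and since $\End_R(S)$ is infinite (because $S$ is an infinite simple module over a left artinian ring, it is a nonzero vector space over the division ring $\End_R(S)$, which must therefore be infinite), the space $\Ext^1_R(T,S)$ is an infinite set: it is a module over $\End_R(S)$ and is nonzero. First I would realize $T$ and $S$ as subquotients of $_RR$ sitting ``adjacently'' — more precisely, using that $T$ is a composition factor of $_RR$ and the standard path argument in the Ext quiver (as in \cite[Lemma 1]{Far}, already invoked in the excerpt), one finds a subquotient $U$ of $_RR$ with a submodule isomorphic to a nontrivial self-extension pattern. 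The cleanest route: choose the projective cover $P$ of $T$; then $\mathrm{rad}(P)/\mathrm{rad}^2(P)$ contains $S$ with multiplicity $\dim_{\End(S)}\Ext^1_R(T,S)\geq 1$, and the preimage in $P$ of the $S$-isotypic part of $\mathrm{rad}(P)/\mathrm{rad}^2(P)$, call it $P'$, is a subquotient of $_RR$ (as $P$ is a summand of $_RR$). If that multiplicity were $\geq 2$ we would already get $S\oplus S$ as a subquotient of $P'$ hence of $_RR$, contradicting Theorem~\ref{t1}(3); so the multiplicity is exactly $1$, but then the submodule lattice of the length-two-ish module $P'/\mathrm{rad}^2(P)$ still has infinitely many members parametrized by $\Hom$-type data over $\End(S)$ coming from the nontrivial extension class — this is where I would use Remark~\ref{r2} in reverse, together with the fact that the extension is non-split, to exhibit infinitely many submodules of $P'$ and hence of $_RR$. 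That contradiction forces $\Ext^1_R(T,S)=0$.

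For part (ii), assume $S$ is infinite, $T$ is finite, and $\Ext^1_R(S,T)\neq 0$. This is exactly the ``transition from infinite to finite'' situation ruled out in \cite[Proposition 2.1]{Iovanov}, which is invoked verbatim in the proof of the third corollary in Section~2; so the quickest argument is simply to cite that result. For self-containedness I would alternatively argue: a nonzero class in $\Ext^1_R(S,T)$ gives a non-split $0\to T\to E\to S\to 0$; since $\End_R(S)$ acts on $\Ext^1_R(S,T)$ making it an infinite set (again $S$ infinite forces $\End_R(S)$ infinite, and a nonzero module over an infinite division ring is infinite), we get infinitely many pairwise non-isomorphic such extensions, and, embedding $S$ and $T$ appropriately as subquotients of $_RR$ via the projective cover of $S$ exactly as above, these produce infinitely many distinct submodules of $_RR$ — contradiction.

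The main obstacle I anticipate is the bookkeeping in realizing the relevant two-step extension as an honest subquotient of $_RR$ whose submodule lattice is genuinely infinite: one must be careful that passing to a subquotient of a projective summand of $_RR$ preserves ``infinitely many submodules'' (it does, since a subquotient of a module with finitely many submodules has finitely many submodules), and that the non-splitness of the chosen extension class, combined with $|\End_R(S)|=\infty$, really does yield infinitely many \emph{distinct} submodules rather than merely infinitely many maps. Invoking \cite[Proposition 2.1]{Iovanov} sidesteps this entirely for part (ii), and a parallel lemma handles part (i); so in the write-up I would lead with those citations and relegate the explicit submodule count to a remark.
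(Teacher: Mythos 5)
Your treatment of (ii) by citing \cite[Proposition 2.1]{Iovanov} is exactly what the paper does, and that part is fine. The problem is part (i) (and your ``self-contained'' variant of (ii)): the mechanism you propose for manufacturing infinitely many left ideals does not work. A non-split extension $0\to S\to E\to T\to 0$ with $T\not\cong S$ simple is uniserial, so its only submodules are $0$, $S$, $E$ --- no matter how large $\End_R(S)$ is. Remark \ref{r2} counts complements of a summand in a \emph{split} sum $A\oplus B$ (parametrized by $\Hom(A,B)$); it says nothing about submodules of a non-split extension, so it cannot be used ``in reverse'' as you suggest. Nor does the infinitude of $\Ext^1_R(T,S)$ as a set help by itself: classes in the same $\End_R(S)$-orbit have isomorphic middle terms, and inside the single projective $P(T)$ the kernels realizing such extensions correspond to the (at most one, in the squarefree case) copies of $S$ in the $S$-isotypic part of $J(P(T))/J^2(P(T))$, so working inside $P(T)$ alone yields only finitely many relevant submodules and no contradiction. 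For the alternative argument you sketch for (ii) the situation is even worse: the square that could appear there is $T\oplus T$ with $\End_R(T)$ \emph{finite}, so Theorem \ref{t1}(3) gives no obstruction at all, and the claim of ``infinitely many pairwise non-isomorphic extensions'' is also unjustified when $\dim_{\End(S)}\Ext^1_R(S,T)=1$.

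The missing idea, which is the whole content of the paper's proof of (i), is to exploit $T\not\cong S$ to bring in the \emph{second} projective $P(S)$ as a separate direct summand of $R$. Write $R=P(T)\oplus P(S)\oplus X$, let $\pi:P(T)\to V$ be a projective cover of the length-two module $V$ with top $T$ and socle $S$ coming from a nonzero class in $\Ext^1_R(T,S)$, and set $N=\ker(\pi)\oplus J(P(S))\oplus X$. Then $R/N\cong V\oplus S$, whose socle contains $S\oplus S$: one copy is the socle of $V$, the other is the top of $P(S)$. Since $R$ has finitely many left ideals it is left artinian of finite length, so $S$ infinite forces $\End_R(S)$ infinite, and then Theorem \ref{t1} (via Remark \ref{r2} applied to the genuinely split module $S\oplus S$) shows ${}_RR$ would have infinitely many submodules --- the desired contradiction. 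The two copies of $S$ necessarily come from two different indecomposable summands of $R$, which is precisely what an argument confined to subquotients of $P(T)$ cannot produce; this is the concrete gap in your proposal.
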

\begin{proof} 
Assume otherwise; then there exists an indecomposable module $V$ of length $2$ with socle $S$ and top $T$, and a projective cover epimorphism $\pi:P(T)\rightarrow V$. Since $T\not\cong S$, $R=P(T)\oplus P(S)\oplus X$ for some $X$; if $N=\ker(\pi)\oplus J(P(S))\oplus X$, then $R/N=V\oplus S$ and contains $S\oplus S$ as a submodule. Thus, by Theorem \ref{t1}, ${}_RR$ has infinitely many submodules, a contradiction.\\
(ii) Since $R$ is left artinian, this follows, for example, from \cite[Proposition 2.1]{Iovanov} (which says that if $R$ is left artinian, $\Ext^1(K,L)\neq 0$ for simple modules $L,K$ then $|K|\leq |L|$ or they are both finite).
\end{proof}
 
We can now give a short new proof of \cite[Theorem 2.4]{Hirano} characterizing when the action of the group of units of $R$ has finitely many orbits.

\begin{theorem}(Hirano)
The following statements are equivalent for a ring $R$:
\begin{enumerate}
\item $R$ has only finitely many orbits under the regular action (i.e. action of $U(R)$).
\item $R$ has only finitely many left ideals.
\item $R$ is the direct sum of finitely many left uniserial artinian rings and a finite ring.
\end{enumerate}
\end{theorem}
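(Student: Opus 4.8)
The plan is to prove the theorem of Hirano by combining the previous Proposition with Theorem~\ref{t1} applied to the regular module ${}_RR$, treating the three implications in the cycle $(1)\Leftrightarrow(2)$ and $(2)\Leftrightarrow(3)$.

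First I would dispose of $(1)\Leftrightarrow(2)$ immediately: a ring is always semilocal as a module over itself in the relevant sense (in fact one only needs $R/J(R)$ semisimple, but more to the point, any ring with finitely many left ideals is left artinian, hence semilocal), so Theorem~\ref{t1} with $M={}_RR$ gives $(1)\Rightarrow(2)$ and $(2)\Rightarrow(1)$ directly once we know $R$ is semilocal; and finitely many left ideals forces $R/J(R)$ to have finitely many left ideals, forcing it to be a finite product of division rings (a semiprimitive ring with finitely many left ideals is semisimple artinian), so $R$ is semilocal. This is the easy direction.

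Next, for $(2)\Rightarrow(3)$: assume $R$ has finitely many left ideals. I would first argue $R$ decomposes, as a ring, as $R=R_f\times R_i$ with $R_f$ finite and $R_i$ having all simple modules infinite — this uses the machinery of \cite{Iovanov} cited earlier (the block decomposition according to finite/infinite simples), together with Proposition~3.2: part (i) says there are no extensions between a finite simple and a non-isomorphic infinite simple, and part (ii) says there are no extensions from an infinite simple to a finite one; combined with the finitely-many-ideals hypothesis this separates the blocks into a finite part and a part with only infinite simples. The finite ring $R_f$ is the last summand in (3). For $R_i$, I would show it is a finite product of left uniserial artinian rings: by Theorem~\ref{t1}, ${}_RR$ distributive (since condition (3) there holds automatically — all simples infinite — and is equivalent to distributivity), so as explained in the discussion after Theorem~\ref{t1}, writing $R_i=\bigoplus e_j R_i$ with $e_j$ primitive orthogonal idempotents, distributivity of ${}_{R_i}R_i$ is equivalent to each $e_j R_i$ being uniserial over $e_j R_i e_j$. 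One then checks that this, plus the absence of extensions between distinct blocks (again Proposition~3.2(i), now among the infinite simples: finitely many ideals kills all off-diagonal $\Ext^1$), forces $R_i$ itself to split as a product of rings each of which is left uniserial (each block is generated by a single primitive idempotent and its module category is uniserial). Artinian-ness is automatic since finitely many left ideals implies the descending chain condition.

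For $(3)\Rightarrow(2)$: a finite ring obviously has finitely many left ideals; a left uniserial artinian ring has a unique composition series of left ideals, hence finitely many left ideals; and a finite product of rings with finitely many left ideals has finitely many left ideals (the left ideals of a product are products of left ideals of the factors). So this direction is a short direct verification. The main obstacle I anticipate is the precise argument in $(2)\Rightarrow(3)$ that the ring $R_i$ with all infinite simples and ${}_{R_i}R_i$ distributive actually \emph{splits as a ring} into uniserial blocks — i.e. upgrading the module-theoretic uniseriality of the projectives $e_jR_i$ to a ring-direct-product decomposition. This requires showing the Ext quiver of $R_i$ is a disjoint union of points (no arrows at all), which is where Proposition~3.2(i) applied within the infinite-simple block, together with the finitely-many-ideals hypothesis producing the needed vanishing of \emph{all} $\Ext^1(S,T)$ for $S\not\cong T$, must be invoked carefully; once the Ext quiver has no arrows, $R_i$ is a product of local rings each of which is left uniserial artinian, giving (3).
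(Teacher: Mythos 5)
Your proposal is correct and takes essentially the same route as the paper: the hard implication $(2)\Rightarrow(3)$ uses exactly the paper's ingredients — the preceding Proposition to make finite and infinite simples (and distinct infinite simples) $\Ext$-orthogonal, giving the block decomposition $R=R_f\times R_I$ with $R_I$ a product of local artinian blocks, and condition (3) of Theorem \ref{t1} (the squarefree condition for the regular module) to force each such block to be left uniserial. The only differences are cosmetic: you close the cycle via $(2)\Rightarrow(1)$ (observing that finitely many left ideals gives left artinian, hence semilocal, so Theorem \ref{t1} applies) together with a direct verification of $(3)\Rightarrow(2)$, whereas the paper routes $(3)\Rightarrow(1)$ through Theorem \ref{t1}; both closures are valid.
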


\begin{proof} $(1)\Rightarrow (2)$ and $(3)\Rightarrow (1)$ follows from Theorem \ref{t1}.\\
$(2)\Rightarrow (3)$. By the previous proposition, since the finite and infinite simples are $\Ext$ orthogonal, $R$ has a block decomposition $R=R_f\times R_I$ with $R_f$ a finite ring and $R_I$ having only infinite simple modules. Also by this proposition, when $S,T$ are infinite, $\Ext^1(S,T)=\Ext^1(T,S)=0$, so $R_I$ is a product of local artinian rings $R_S$. Now by condition (3) in Theorem \ref{t1}, each $R_S$ will have no subfactor isomorphic to $S\oplus S$, so $J(R_S)^n/J(R_S)^{n+1}\cong S{\,\rm or\,0}$ as left modules. This shows that $R_S$ is left uniserial. 
\end{proof}

\subsection{Finitely many two sided ideals}

It is natural to ask what is the connection between the finiteness of the lattice of left ideals, or at least of the finiteness of the lattice of left submodules of certain indecomposable submodules, and the finiteness of the lattice of two sided ideals of the ring. We consider, in what follows, a basic artinian ring $A$, and let $e_1,\dots,e_n$ be primitive orthogonal idempotents. Since every two sided ideal $I$ of $A$ is $I=\bigoplus\limits_{i=1}^n{Ie_i}$ and $Ie_i$ are left submodules of $Ae_i$, it is immediate to note that if $Ae_i$ has finitely many submodules for each $i$ then $A$ has finitely many ideals. Of course, the converse is not true, as we see next.

\begin{example}
Let $Q$ be the Toeplitz quiver $\xymatrix{a\ar@(dl,ul)[]^x & b \ar[l]^y }$ and $A$ the quotient algebra of the path algebra $\KK[Q]$ of $Q$ by the relation $x^2=0$ (multiplication follows composition), where $\KK$ is an infinite field. It has a basis $\{a,b,x,y,xy\}$. Note that the right projective indecomposables are $P_r(b)={\rm Span}_\KK\{b\}$, $P_r(a)={\rm Span}_\KK\{a,x,y,xy\}$ and $P_r(a)$ has socle spanned by $\{y,xy\}$ and isomorphic to $\KK b\oplus \KK b$, so the socle is not squarefree. Hence, $P_r(b)$ has infinitely many submodules, but $A$ has finitely many two-sided ideals. Indeed, it is not hard to see that every two-sided is either $0$ or else contains $xy$. These are in bijection with ideals of the quotient of $\KK[Q]$ modulo $J^2$, whose only proper non-zero two sided ideals are easily seen to be spanned, respectively, by one of the sets $\{a,x,y\}$, $\{b,x,y\}$, $\{x,y\}$, $\{b,y\}$, $\{x\}$. 
\end{example}

Nevertheless, as it usually happens, when $A$ is acyclic, things are much better. We make a few observations. As usual, denote $S_i$ the simple module (at the top) of $Ae_i$, $J=J(A)$, so $S_i\not\cong S_j$ when $i\neq j$. We restrict to algebras over infinite fields which is the important case for representation theory, although this can be done in more generality. Assume $A$ is Schur, i.e. $\End_A(S)=\KK$ for every simple $S$ (for example, $A$ when $A$ is basic). Then the multiplicity $[Ae_i:S_j]$ of $S_j$ in $Ae_i$ is equal to $\dim\Hom_A(Ae_j,Ae_i)=\dim(e_jAe_i)$. If $A$ is acyclic, i.e. the $\Ext$ quiver of $A$ is acyclic, then it is well known that $\End_A(Ae_i)=\KK$ for all $i$ (since $[Je_i:S_i]=0$, as otherwise, as before, we would find a non-trivial path from $S_i$ to $top(Ae_i)=S_i$). So we can prove the following converse. Assuming $A$ is basic is not a restriction: up to Morita equivalence, finiteness of the lattice of ideals is a Morita invariant, since ideals of $A$ are in bijection with Serre subcategories of $A{\rm-Mod}$. 

\begin{theorem}\label{t.twosided}
Let $A$ be a basic pointed $\KK$-algebra with finitely many two-sided ideals ($\KK$ infinite). If $A$ is acyclic, then projective indecomposable $A$-modules are distributive, equivalently, they have finitely many submodules (equivalently, finitely many orbits).
\end{theorem}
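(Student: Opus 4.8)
The plan is to prove the contrapositive-style statement by showing that if some projective indecomposable $Ae_i$ has a non-squarefree composition series, then $A$ must have infinitely many two-sided ideals. So suppose $[Ae_i:S_j]\geq 2$ for some $i,j$. The first key observation is that since $A$ is acyclic, $\End_A(Ae_i)=\KK$ for all $i$ (as remarked in the text), and more importantly, acyclicity forces $i\neq j$ in this situation — indeed $[Ae_i:S_i]=1$ always holds in the acyclic case. Hence $\dim(e_jAe_i)\geq 2$ with $i\neq j$, and I want to leverage the abundance of submodules of $Ae_i$ that this produces, and push it down to two-sided ideals.

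The main step is to engineer, from a pencil of left submodules of $Ae_i$, a pencil of genuine two-sided ideals. The natural candidate for a two-sided ideal associated to a left submodule $N\subseteq Ae_i$ is $AN = \sum_k A e_k \cdot (e_i N)$; but one must be careful that distinct $N$'s give distinct two-sided ideals. The cleanest route is to pick $N$ \emph{minimal} with the relevant property: consider the submodules of $Ae_i$ of the form $L\subseteq Je_i$ with $L$ contained in (a suitable layer of) the socle or of $J^m e_i/J^{m+1}e_i$ isomorphic to a direct sum involving $S_j$ with multiplicity $\geq 2$. Concretely, choose $m$ largest so that $S_j$ appears with multiplicity $\geq 2$ in $J^m e_i / J^{m+1}e_i$; inside the semisimple module $J^m e_i/J^{m+1}e_i$ there is a two-dimensional ($S_j\oplus S_j$)-isotypic piece, giving a $\mathbb{P}^1$ of copies of $S_j$ lifting to submodules $L_t \subseteq J^m e_i$ with $L_t/J^{m+1}e_i\cap L_t$ simple. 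Because $m$ was chosen maximally and $A$ is acyclic, the two-sided ideal $A L_t = A e_j L_t$ generated by $L_t$ should not pick up the other copies of $S_j$ in higher layers, so the assignment $t\mapsto A L_t$ is injective: if $A L_t = A L_{t'}$ then $L_t = e_i (A L_t) = e_i(A L_{t'}) = L_{t'}$, using that acyclicity prevents $A e_j$ from mapping $L_{t'}$ back up into $e_i$-space other than through the identity component.

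I would carry this out in the order: (1) record that acyclicity gives $\End_A(Ae_i)=\KK$ and $[Ae_i:S_i]=1$, hence in the non-distributive case we have $i\neq j$ for the offending $S_j$; (2) identify the top layer $m$ of $Ae_i$ where $S_j$ has multiplicity $\geq 2$ and produce the $\mathbb{P}^1$-family $\{L_t\}$ of left submodules; (3) form the two-sided ideals $A L_t$ and show $t\mapsto A L_t$ is injective by computing $e_i(A L_t)$ and using maximality of $m$ plus acyclicity; (4) conclude $A$ has infinitely many two-sided ideals, contradiction. The equivalences in the conclusion (distributive $\Leftrightarrow$ finitely many submodules $\Leftrightarrow$ finitely many orbits for $Ae_i$) then follow immediately from Theorem \ref{t1} and its corollary, since $A$ is a finite-dimensional algebra over an infinite field (so $\End(S)$ is infinite for every simple $S$, making condition (3) of Theorem \ref{t1} exactly the squarefree/distributivity test) and $A$ is semilocal.

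The step I expect to be the main obstacle is (3): proving that generating a two-sided ideal from $L_t$ does not collapse the family, i.e. that $e_i(A L_t) = L_t$ rather than something larger that is independent of $t$. The worry is that left-multiplying $L_t$ by elements of $e_iAe_j$ could land back in $e_iAe_i = \KK e_i$ only, but could also, via longer routes $e_i A e_k A e_j$, mix the various $L_t$ together or enlarge them past the layer $J^m e_i$. The maximality of $m$ handles enlargement within the $S_j$-isotypic part, and acyclicity (no oriented cycles in the Ext quiver, so no path from $S_j$ back to $S_j$ of positive length, and no path $S_i \to \cdots \to S_i$) is what rules out the pathological mixing; making this airtight — precisely identifying $e_i(AL_t)$ as a submodule of $Ae_i$ and showing it meets the layer $J^m e_i/J^{m+1}e_i$ exactly in the line corresponding to $L_t$ — is the crux of the argument and will require the careful layer-by-layer bookkeeping that I would not grind through here.
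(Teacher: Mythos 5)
Your overall instinct — trade an infinite family of left submodules of $Ae_i$ for an infinite family of two-sided ideals, using acyclicity to keep the trade injective — is exactly the idea of the paper's proof, but your write-up has a genuine gap precisely at the step you flag, and two of your intermediate steps are faulty as stated. First, step (2) can fail to get off the ground: $[Ae_i:S_j]\ge 2$ does \emph{not} imply that some single radical layer $J^me_i/J^{m+1}e_i$ contains $S_j$ with multiplicity $\ge 2$. For the path algebra of the acyclic quiver with arrows $i\to j$, $i\to a$, $a\to j$, the projective at $i$ has $[Ae_i:S_j]=2$ but $S_j$ occurs once in layer $1$ and once in layer $2$, so your ``largest $m$'' does not exist (the two copies do sit together in the socle, which is the stratification the paper actually uses, but not in a common radical layer). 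Second, the ideal you form has the sides confused: for a left submodule $N\subseteq Ae_i$ one has $AN=N$, and expressions like $AL_t=Ae_jL_t$ or $\sum_k Ae_k\cdot(e_iN)$ do not describe the two-sided ideal generated by $N$, which is $NA$; correspondingly the projection you should compare is the $e_i$-column $(\,\cdot\,)e_i$, not $e_i(\,\cdot\,)$, so the claimed identity $L_t=e_i(AL_t)$ is not the right computation. Third, and most importantly, the injectivity of (submodule) $\mapsto$ (two-sided ideal) — which you correctly identify as the crux — is left entirely undone, and the mechanism you propose for it (maximality of $m$, acyclicity preventing ``mixing'' along $e_iAe_kAe_j$) is misdirected: left multiplication is harmless because the $L_t$ are already left submodules; the only thing to control is right multiplication, and what controls it is simply that acyclicity forces $e_iAe_i=\KK e_i$. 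Indeed, for \emph{any} left submodule $L\subseteq Ae_i$ and $x\in L$, $a\in A$ one has $xae_i=x(e_iae_i)\in\KK x$, hence $(LA)e_i=L(e_iAe_i)=L$, so $L\mapsto LA$ is injective into the set of two-sided ideals. With this one-line observation no layer bookkeeping, no maximality of $m$, and in fact no special choice of family is needed: any infinite family of distinct submodules of $Ae_i$ (which exists when $Ae_i$ is not thin, e.g.\ by Proposition \ref{p.6.1} together with Theorem \ref{t1} and Remark \ref{r2}, since $\End(S_j)=\KK$ is infinite) already contradicts finiteness of the ideal lattice.

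For comparison, the paper avoids your layer analysis altogether by an induction on the number of simples: it first bounds $\dim\Ext^1(S_i,S_j)\le 1$ (otherwise the Kronecker algebra is a quotient and its infinitely many annihilator ideals pull back to $A$), then works at a source $1$ of the Ext quiver, where $S_1=Ae_1$ is projective, so multiplicity $\ge 2$ produces an honest submodule $V\cong S_1\oplus S_1\subseteq Ae_i$ sitting in the socle; for each of the infinitely many simple submodules $S=Ax\subseteq V$ it shows $SAe_i=S$ by exactly the $\End_A(Ae_i)=\KK$ computation above, so the ideals $SA$ are pairwise distinct, and it then passes to $A/\Sigma$ (with $\Sigma$ the $S_1$-socle) to induct. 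So the missing computation in your proposal is short, but it is the whole point; as written, your argument does not yet prove the theorem.
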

\begin{proof}
First note that $\dim\Ext^1(S_i,S_j)\leq 1$. For $i=j$ this is $0$, and if $\dim\Ext^1(S_i,S_j)\geq 2$, then writing $A$ as a quotient of a path algebra by an admissible ideal, we would find that the path algebra $B$ of the quiver $\xymatrix{\bullet \ar@/^/[r]\ar@/_/[r] & \bullet }$ is a quotient of $A$. This is easily seen to have infinitely many two-sided ideals (the annihilator of each type of 2 dimensional uniserial module will be a different ideal), and then so would $A$ by pullback. \\
Next, we prove that $[Ae_i:S_j]\leq 1$, which will end the proof by Theorem \ref{t1}. Proceed by induction on the number of simple modules. Let $i$ be a source of the Ext quiver $Q$ of $A$, so $Ae_i=S_i$ is projective; assume $i=1$ without loss of generality. We show that $[Ae_i:S_1]\leq 1$. Assume otherwise; then there is $V\subseteq Ae_i$, $V\cong S_1\oplus S_1$ (since $S_1$ is projective, it can only occur at the socle of $Ae_i$). Let $x\in V$, so $x=xe_i$ and $S=Ax\cong S_1$ is 1-dimensional ($A$ is basic, so a cyclic submodule of $V$ is simple). Let $I_S:=SA=\bigoplus\limits_jSAe_j=SAe_i\oplus\bigoplus\limits_{j\neq i}SAe_j$ be the two sided ideal generated by $S$. If $a\in A$, $xae_i=xe_i\cdot e_iae_i=\lambda xe_i=\lambda x$ for some $\lambda\in \KK$, since this right action of $e_iAe_i$ represents an endomorphism of $Ae_i$, and $\End_A(Ae_i)=\KK$. Hence, $SAe_i=S$, and $I_S=S\oplus\bigoplus\limits_{j\neq i}SAe_j$. This shows that if $x,y\in V$ are two elements such that $Ax\neq Ay$, then $I_{Ax}\neq I_{Ay}$ (their projections onto $Ae_i$ are different). Since $V\cong S_1\oplus S_1$ has infinitely many simple submodules $Ax$, this implies that $A$ has infinitely many ideals, a contradiction.\\
Hence, $[Ae_i:S_1]\leq 1$ for all $i$. Now, let $\Sigma$ be the $S_1$-socle of $A$; it is a two sided ideal. Then since $S_1$ is projective, $[A/\Sigma:S_1]=0$ and the algebra $A/\Sigma$ has only $n-1$ isomorphism types of simples, and by induction, we get that $[Ae_i/(Ae_i\cap \Sigma):S_j]\leq 1$ because it is easy to see that $Ae_i/Ae_i\cap \Sigma$ are projective over $A/\Sigma$; this inequality lifts back to $[Ae_i:S_j]\leq 1$. One can also see this by localizing at $e_2+\dots+e_n$, and applying the induction hypothesis for the algebra $(e_2+\dots+e_n)A(e_2+\dots+e_n)$, to get that $[Ae_i:S_j]\leq 1$ for $i,j\geq 2$ (obviously, $[Ae_1:S_i]=0$ for $i\neq 1$ since $Ae_1=S_1$). 
\end{proof}

\begin{remark}\label{r.semidistributive}
Recall that a module is called semidistributive if it is a direct sum of distributive modules. We note that for an algebra $A$, projective indecomposable left modules are distributive if and only if it is left semidistributive (meaning it is semidistributive as a left module). This is obvious, since if $A=\bigoplus\limits_i M_i$ with $M_i$ distributive, then each $M_i$ is a direct sum of projective indecomposables, each of which will have to be distributive since they are submodules of the $M_i$'s.
\end{remark}

Hence, we can reformulate the main result of this section as 

\begin{theorem}
Let $A$ be a basic pointed algebra. If $A$ is acyclic, then $A$ is semidistributive if and only if $A$ has finitely many (two-sided) ideals.
\end{theorem}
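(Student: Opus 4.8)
The plan is to observe that this final theorem is essentially a repackaging of Theorem~\ref{t.twosided} together with Remark~\ref{r.semidistributive}, so the work is almost entirely bookkeeping; the substance was already carried out above. Concretely, I would argue as follows.

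First I would handle the implication that finitely many two-sided ideals implies semidistributive. Assume $A$ is basic pointed, acyclic, with only finitely many two-sided ideals. By Theorem~\ref{t.twosided}, each projective indecomposable left $A$-module $Ae_i$ is distributive (equivalently, has finitely many submodules, equivalently finitely many orbits). Since ${}_AA = \bigoplus_i Ae_i$ is then a direct sum of distributive modules, $A$ is left semidistributive by definition. By the left-right symmetry of the hypothesis (``finitely many two-sided ideals'' is a left-right symmetric condition, and acyclicity of the Ext quiver is as well, since reversing arrows preserves acyclicity and corresponds to passing to $A^{\mathrm{op}}$), the same argument applied to $A^{\mathrm{op}}$ shows $A$ is right semidistributive. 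So one obtains semidistributivity on whichever side is desired (and in fact both).

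Next I would handle the converse: if $A$ is semidistributive (say, left semidistributive), then $A$ has finitely many two-sided ideals. By Remark~\ref{r.semidistributive}, left semidistributivity of $A$ is equivalent to each projective indecomposable $Ae_i$ being distributive; by the Corollary following Theorem~\ref{t1} (with $\KK$ infinite, so every simple has infinite endomorphism ring), distributivity of $Ae_i$ is equivalent to $Ae_i$ having only finitely many submodules. Now invoke the elementary observation recorded just before Theorem~\ref{t.twosided}: every two-sided ideal $I$ of the basic algebra $A$ decomposes as $I = \bigoplus_{i=1}^n Ie_i$ with each $Ie_i$ a left submodule of $Ae_i$. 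Hence the map $I \mapsto (Ie_1,\dots,Ie_n)$ embeds the lattice of two-sided ideals into the product $\prod_i L(Ae_i)$ of the (finite) submodule lattices, which is finite; therefore $A$ has finitely many two-sided ideals. Note this direction does not even use acyclicity.

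I do not anticipate any real obstacle here: the only mild point of care is making explicit that the equivalences ``distributive $\Leftrightarrow$ finitely many submodules'' and ``$A$ left semidistributive $\Leftrightarrow$ all $Ae_i$ distributive'' are exactly the content of the Corollary to Theorem~\ref{t1} and of Remark~\ref{r.semidistributive} respectively, and that acyclicity is needed only for the forward direction (it enters through Theorem~\ref{t.twosided}, where it guarantees $\End_A(Ae_i) = \KK$ and rules out the two-arrow quiver). The left/right symmetry remark deserves one sentence so that the parenthetical ``(left, equivalently, right)'' flavor implicit in the surrounding discussion is justified. So the proof is short: quote Theorem~\ref{t.twosided} and Remark~\ref{r.semidistributive} for one direction, and the decomposition $I=\bigoplus Ie_i$ plus the Corollary for the other.
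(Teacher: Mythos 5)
Your proposal is correct and follows exactly the route the paper intends: the theorem is stated as a reformulation obtained by combining Theorem~\ref{t.twosided} with Remark~\ref{r.semidistributive} for one direction, and the observation preceding Theorem~\ref{t.twosided} (the decomposition $I=\bigoplus_i Ie_i$ into submodules of the $Ae_i$, together with distributive $\Leftrightarrow$ finitely many submodules over the infinite field $\KK$) for the converse. Your added remarks that acyclicity is needed only for the forward implication and that right semidistributivity follows by passing to $A^{\mathrm{op}}$ are accurate refinements of the same argument, not a different approach.
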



\section{Incidence algebras and their deformations}\label{s.inc}


\noindent Let $(P,\leq)$ be a finite poset. Recall that the incidence algebra $I(P,\KK)$ of $P$ over $\KK$, where $\KK$ is a field, is defined to be the set of all functions $f: P\times P\rightarrow \KK$ such that $f(x,y)=0$ if $x\not\leq y$ with operations given by 
\begin{align*}
(f+g)(x,y)&=f(x,y)+g(x,y),\\
(f\cdot g)(x,y)&=\sum_{x\leq z\leq y}f(x,z)\cdot g(z,y)\textrm{ and }\\
(r\cdot f)(x,y)&=r\cdot f(x,y),
\end{align*}
\noindent for $f,g\in I(P,\KK)$ with $r\in \KK$ and $x,y,z\in P$. 

\noindent If $x\leq y\in P$, let $f_{xy}\in I(P,\KK)$ denote the function defined by 
$$f_{xy}(a,b)=\begin{cases} 1&\textrm{ if } (a,b)=(x,y),\\ 0 & \textrm{ otherwise.}\end{cases}$$ 
\noindent When $x=y$, we will denote $f_{xx}$ by  $f_x$. The set $\{f_{xy}\in I(P,k)| x\leq y\in X\}$ is a $\KK$-basis for $I(P,\KK)$. In particular, if $g\in I(P,\KK)$, we can write
$g=\sum_{x\leq y}a_{xy}f_{xy}$ where $a_{xy}=g(x,y)$.

An alternative definition of incidence algebras is that of structural matrix algebras. These have been studied by a few authors (see for example \cite{ABW} and references therein). A structural matrix algebra is, by definition, a subspace of $M_n(\KK)$ consisting of matrices having $0$ at fixed positions $(i,j)$ in a set $I\subseteq \{1,\dots,n\}\times \{1,\dots,n\}$, which is furthermore a subalgebra. This is the case if and only if there is a quasi-ordered set $(Q=\{1,2,\dots,n\},\preceq)$ (i.e. $\preceq$ is reflexive, transitive but not necessarily symmetric) such that $I=\{(i,j)| i\preceq j \}$. This algebra $A$ is isomorphic to the incidence algebra of the quasi-ordered set $Q$, and if $P=Q/\sim$ is the poset associated to $Q$ by the equivalence relation $a\sim b$ if $a\preceq b$ and $b\preceq a$, then $A$ is Morita equivalent to $I(P,\KK)$. Given an incidence algebra $I(P,\KK)$ viewed as a structural matrix algebra, consider the action of $I(P,\KK)$ on column vectors $\KK^n$; we call this the {\it defining representation} of $I(P,\KK)$. Alternatively, denoting $(e_x)_{\in P}$ the canonical basis of $\KK^n$ ($|P|=n$), then the action is defined by $f_{xy}e_y=e_x$. Obviously, this is well defined up to isomorphism of representations. We will see later how this representation also appears canonically from a cohomology class. 

Yet another definition of incidence algebras, of combinatorial importance, is that they are exactly the algebras which are dual to an incidence coalgebra of a poset, with multiplication being convolution.




We introduce now a class of algebras whose representation theory is close to that of incidence algebras; these will be certain deformations of the usual incidence algebra.

\begin{definition}\label{d.1}
Let $\lambda:\{(x,y,z)|x,y,z\in P, \,x\leq y\leq z\}\rightarrow \KK^*$, and denote for short $\lambda_{xz}^y=\lambda(x,y,z)$. The algebra $I_{\lambda}(P,\KK)$ is defined as $I_{\lambda}(P,\KK)=I(P,\KK)$ as a vector space, but with the deformed multiplication $*_\lambda$ on $I(P,\KK)$ given by 
$$f_{xy}*_\lambda f_{ yz}=\lambda_{xz}^y\cdot f_{xz},$$
 and 
$$f_{xy}*_\lambda f_{tz}=0$$ when  $y\neq t.$
We call this a deformation of the incidence algebra $I(P,\KK)$.
\end{definition}

The above algebra can be regarded in the context of deformation theory, although it departs slightly from it. It is nevertheless the algebra of relevance for our representation theoretic purposes. Let $Q$ be the Hasse diagram (quiver) of $P$: recall it has vertices $Q_0=P$ and an arrow $a\rightarrow b$ in $Q_1$ whenever $a<b$ minimally, i.e. there is no $c\in P$ with $a<c<b$. Let $\KK[Q]$ be the path algebra of $Q$, with multiplication given by concatenation (note that this is opposite than the more common convention of multiplication by composition; this agrees with the combinatorial multiplication of the incidence algebra). That is, if the paths $p,q$ are such that $p$ ends where $q$ begins, then we write $pq$ for ``$p$ followed by $q$". The incidence algebra $I(P,\KK)$ is the quotient of $\KK[Q]$ by the admissible ideal $I$ generated by relations of the form $p=q$, where $p$ and $q$ begin and end at the same point $a\in P$. Hence, $I(P,\KK)=T_{\KK Q_0}(\KK Q_1)/I$ where $A=\KK Q_0$ is a commutative semisimple algebra and $\KK Q_1$ is a bimodule over $A$. Then $I_\lambda(P,\KK)$ can be regarded as a flat deformation, in the sense that $\dim(I_\lambda(P,\KK))=\dim(I(P,\KK))$. 

\subsection*{Cohomology and deformations}

As usual, the formalism controlling such deformations of algebras has to do with cohomology; it will be, nevertheless, a simplicial (equivalently, singular) cohomology rather than Hochschild cohomology. 

\noindent Recall that to every poset $P$, there is an associated abstract simplicial complex $\Delta(P)$ (simplicial set) or $|P|$ called the geometric realization of $P$ as follows (see \cite{W}). The $0$-simplices of $\Delta(P)$ are the elements of $P$ and the set of $n$-simplices of $\Delta(P)$ are the totally ordered subsets of $P$ (i.e., the chains of $P$): $C^n=\{(a_0,a_1,\dots,a_n) | a_i\in P, \, a_0\leq a_1\leq\dots \leq a_n\}$. The face maps are the usual obvious ones. We note that sometimes $C^n$ is taken to consist of the strictly increasing chains; up to homotopy, the two define the same simplicial set (and, in fact, the two topological spaces obtained are homeomorphic).
If $\ZZ C^n$ is the free abelian group with basis $C^n$ as usual, we get a chain complex 
\begin{equation}\label{e3}
\xymatrix{
\dots \ar[r] & \ZZ C^n \ar[r]^{\partial_n} & \ZZ C^{n-1} \ar[r] & \dots
}
\end{equation}
\noindent realizing singular homology, with boundary map given by 
$$\partial_{n}(s_0,s_1,s_2,\cdots,s_n)=\sum_{i=0}^n(-1)^i(s_0,s_1,\cdots, s_{i-1},s_{i+1},\cdots,s_n).$$
\noindent We dualize (\ref{e3}) with respect to the abelian (multiplicative) group $\KK^*$, and  obtain the cochain complex 
\begin{equation}\label{e4}
\xymatrix{\dots \ar[r] & E^{n-1} \ar[r]^{\delta^n} & E^n\ar[r] & \dots}
\end{equation}
\noindent where $E^n=\textrm{Hom}_{\mathbb{Z}}(\mathbb{Z}C^n,\KK^*)$, $\delta^n=(\partial_{n})^*$ and 
$$(\partial_n)^*(f)(s_0,s_1,s_2,\cdots,s_n)=\prod_{i=0}^nf(s_0,s_1,s_2,\cdots,s_{i-1},s_{i+1},\cdots,s_n)^{(-1)^i}.$$

We note that the unusual multiplicative notation in the last equation is due to the fact that we use the group $(\KK^*,\cdot)$. At this point, we note that this set-up has been considered before, except that with  coefficients $\KK$. It is a classical result of Gerstenhaber and Schack that the Hochschild cohomology of the incidence algebra $I(P,\KK)$ is isomorphic (as graded algebras) with the singular cohomology of $\Delta(P)$. The same type of cohomology with coefficients in $\KK$ was also used by \cite{Cs,IZ} to study homological properties of $I(P,\KK)$. We have the following proposition straightforward (and standard) to check computationally.

\begin{proposition}
The multiplication $*_\lambda$ of $I_{\lambda}(P,\KK)$ associative if and only if
\begin{equation}\label{e4.1}
\lambda_{x,z}^y\cdot \lambda_{x,t}^z=\lambda^y_{x,t}\cdot\lambda_{y,t}^z
\end{equation}
\noindent for any $x\leq y\leq z\leq t\in P$, that is, if and only if $\lambda=\lambda(\bullet,\bullet,\bullet)$ is a 2-cocycle: $\lambda\in \ker(\delta^3)$.
\end{proposition}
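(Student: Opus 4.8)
The plan is to verify associativity directly on the combinatorial basis and then read the resulting identity off against the differential $\delta^3$. Since $*_\lambda$ is $\KK$-bilinear, it suffices to check that $(f_{ab}*_\lambda f_{cd})*_\lambda f_{ef}=f_{ab}*_\lambda(f_{cd}*_\lambda f_{ef})$ for all basis elements $f_{ab},f_{cd},f_{ef}$. First I would observe that both sides vanish unless the indices chain up: a product $f_{pq}*_\lambda f_{rs}$ is zero whenever $q\neq r$, so if $b\neq c$ the left-hand side is already $0$, while on the right $f_{cd}*_\lambda f_{ef}$ is a scalar multiple of $f_{cf}$ (or $0$) and then $f_{ab}*_\lambda f_{cf}=0$ because $b\neq c$; a symmetric argument disposes of the case $d\neq e$. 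Hence the only configuration carrying information is $a=x$, $b=c=y$, $d=e=z$, $f=t$ with $x\leq y\leq z\leq t$ in $P$, and associativity amounts to a single scalar equality for each such quadruple.

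In that case one computes $(f_{xy}*_\lambda f_{yz})*_\lambda f_{zt}=\lambda_{x,z}^{y}\,(f_{xz}*_\lambda f_{zt})=\lambda_{x,z}^{y}\lambda_{x,t}^{z}\,f_{xt}$, and $f_{xy}*_\lambda(f_{yz}*_\lambda f_{zt})=\lambda_{y,t}^{z}\,(f_{xy}*_\lambda f_{yt})=\lambda_{x,t}^{y}\lambda_{y,t}^{z}\,f_{xt}$. Since $f_{xt}\neq 0$, associativity on this triple holds if and only if $\lambda_{x,z}^{y}\lambda_{x,t}^{z}=\lambda_{x,t}^{y}\lambda_{y,t}^{z}$, which is exactly equation (\ref{e4.1}). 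To finish, I would plug the chain $(x,y,z,t)\in C^3$ into the displayed formula for $(\partial_3)^*$: this gives $(\delta^3\lambda)(x,y,z,t)=\lambda_{y,t}^{z}\cdot(\lambda_{x,t}^{z})^{-1}\cdot\lambda_{x,t}^{y}\cdot(\lambda_{x,z}^{y})^{-1}$, so (\ref{e4.1}) holding for every such quadruple is the same as $\delta^3\lambda$ being the constant function $1$, i.e. $\lambda\in\ker(\delta^3)$.

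There is essentially no obstacle here: the statement is a routine $2$-cocycle computation, entirely analogous in spirit to the classical Gerstenhaber--Schack identification of the Hochschild cohomology of $I(P,\KK)$ with the simplicial cohomology of $\Delta(P)$. The only point requiring a small amount of care is the index bookkeeping in the first step, namely confirming that every quadruple of basis elements on which some side of the associativity equation is nonzero has the form $(f_{xy},f_{yz},f_{zt})$ with $x\leq y\leq z\leq t$. If one additionally wants the idempotents $f_x$ to act as local units on $I_\lambda(P,\KK)$ one imposes the harmless normalization $\lambda_{x,y}^{x}=\lambda_{x,y}^{y}=1$, but this plays no role in the associativity criterion itself.
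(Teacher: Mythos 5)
Your proof is correct and is exactly the computation the paper has in mind: the paper states the proposition as "straightforward (and standard) to check computationally," and your basis-by-basis verification, the reduction to chained triples $(f_{xy},f_{yz},f_{zt})$ with $x\leq y\leq z\leq t$, and the identification of the resulting scalar identity with $(\delta^3\lambda)(x,y,z,t)=1$ via the alternating-product formula for $(\partial_3)^*$ is precisely that standard check. All the scalar bookkeeping (including the sign pattern $+,-,+,-$ in the dualized boundary) matches the paper's conventions, so nothing further is needed.
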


\begin{proposition}
If $\lambda^{-1}\mu\in {\rm Im}(\delta^2)$ (i.e. $\lambda$ and $\mu$ differ by a coboundary) then $I_\lambda(P,\KK)\cong I_\mu(P,\KK)$ as (possibly non-associative) algebras. 
\end{proposition}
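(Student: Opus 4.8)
The plan is to write down an explicit isomorphism $\Phi \colon I_\lambda(P,\KK)\to I_\mu(P,\KK)$ that is diagonal in the combinatorial basis, i.e. $\Phi(f_{xy}) = c_{xy}\, f_{xy}$ for suitable scalars $c_{xy}\in\KK^*$, and then read off exactly which condition on the $c_{xy}$ makes $\Phi$ multiplicative. Since $\lambda^{-1}\mu\in\im(\delta^2)$, there is a $1$-cochain $g\in E^1 = \Hom_\ZZ(\ZZ C^1,\KK^*)$, i.e. a function $g(x,y)$ defined for $x\le y$ with values in $\KK^*$, such that $\mu_{xz}^y = \lambda_{xz}^y\cdot (\delta^2 g)(x,y,z)$, where by the formula for $\delta^2 = (\partial_2)^*$ one has $(\delta^2 g)(x,y,z) = g(y,z)\, g(x,y)\, g(x,z)^{-1}$. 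The natural guess is then $c_{xy} = g(x,y)$, so I would define $\Phi(f_{xy}) = g(x,y)\, f_{xy}$ and extend linearly.

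Next I would check that $\Phi$ is an algebra homomorphism. Since both multiplications vanish on $f_{xy}*f_{tz}$ when $y\ne t$, and $\Phi$ is diagonal, compatibility is automatic off the "composable" locus, so the only thing to verify is, for $x\le y\le z$,
\[
\Phi(f_{xy})*_\mu \Phi(f_{yz}) = \Phi(f_{xy}*_\lambda f_{yz}).
\]
The left side is $g(x,y)g(y,z)\,\mu_{xz}^y\, f_{xz}$ and the right side is $\lambda_{xz}^y\, g(x,z)\, f_{xz}$, so the required identity is exactly $g(x,y)g(y,z)\,\mu_{xz}^y = \lambda_{xz}^y\, g(x,z)$, i.e. $\mu_{xz}^y = \lambda_{xz}^y\, g(x,z)\, g(x,y)^{-1} g(y,z)^{-1} = \lambda_{xz}^y\,(\delta^2 g)(x,y,z)$, which is precisely the hypothesis that $\lambda^{-1}\mu = \delta^2 g$. (One has to be a little careful about the sign/ordering convention in $(\partial_2)^*$ and possibly replace $g$ by $g^{-1}$ depending on how $\im(\delta^2)$ is normalized; this is the only place the computation could need a cosmetic fix.) Finally, $\Phi$ is bijective because each $c_{xy} = g(x,y)$ is invertible in $\KK$, so it has the obvious diagonal inverse $f_{xy}\mapsto g(x,y)^{-1} f_{xy}$; it also fixes the idempotents $f_x$ since $g(x,x)$ may be normalized to $1$ (or absorbed, as it plays no role in the composability relations), so $\Phi$ is unital. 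Hence $I_\lambda(P,\KK)\cong I_\mu(P,\KK)$.

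There is essentially no obstacle here: the statement is the routine "coboundaries act trivially" half of a deformation-cohomology dictionary, and the proof is a one-line verification once the diagonal isomorphism is written down. The only mild subtlety is bookkeeping with the multiplicative (rather than additive) coefficient group $\KK^*$ and the exponent signs in $(\partial_2)^*(g)(x,y,z) = g(y,z)^{+1}g(x,z)^{-1}g(x,y)^{+1}$, which one must match against the desired relation $g(x,y)g(y,z) = \lambda_{xz}^y\mu_{xz}^{y\,-1} g(x,z)$; I would simply state that the two agree by definition of $\delta^2$, possibly after replacing $g$ by its inverse. I would also remark that one does not even need $\lambda,\mu$ to be cocycles for this argument — $\Phi$ transports the (possibly non-associative) multiplication $*_\lambda$ to $*_\mu$ regardless — which is why the statement is phrased for "possibly non-associative" algebras; associativity of one forces associativity of the other precisely because $\delta^3\lambda$ and $\delta^3\mu$ then differ by $\delta^3\delta^2 g = 0$.
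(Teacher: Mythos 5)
Your proposal is correct and is essentially the paper's own proof: the paper also defines the diagonal map $\varphi(f_{xy})=\alpha_{xy}f_{xy}$ from the $1$-cochain realizing the coboundary and verifies multiplicativity by the same one-line computation $\lambda_{xz}^y\alpha_{xz}=\mu_{xz}^y\alpha_{xy}\alpha_{yz}$. The only difference is the direction of the cochain ($g$ versus $g^{-1}$), which you correctly flag and which is harmless since ${\rm Im}(\delta^2)$ is a subgroup.
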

\begin{proof}
Indeed, the condition translates as $\lambda_{xz}^y(\mu_{xz}^y)^{-1}=\alpha_{xy}\alpha_{yz} (\alpha_{xz})^{-1}$ for $\alpha\in{\rm Im}(\delta^2)$, so $\lambda_{xz}^y\alpha_{xz}=\mu_{xz}^y \alpha_{xy}\alpha_{yz}$. We simply define $\varphi:I_\lambda(P,\KK)\rightarrow I_\mu(P,\KK)$ by $\varphi(f_{xy})=\alpha_{xy}f_{xy}$ and observe that $\varphi(f_{xy}*_\lambda f_{yz})=\varphi(f_{xy})*_\mu \varphi(f_{yz})$. 
\end{proof}

\begin{remark}
Note that when $\lambda\in Z^2(E^{\bullet})=\ker(\delta^3)$, then substituting $y=z=t$ and respectively $x=y=z$ in (\ref{e4.1}), we obtain $\lambda(x,y,y)=\lambda(y,y,y)$ and $\lambda(x,x,x)=\lambda(x,x,t)$ for $x\leq y$ and $x\leq t$, i.e. $\lambda_{xy}^y=\lambda_{yy}^y$ and $\lambda_{xx}^x=\lambda_{xt}^x$. Given $\lambda\in Z^2(E^{\bullet})$, let 
$\alpha\in E^{1}$  be defined by $\alpha(x,y)=\alpha_{xy}=(\lambda_{xy}^x)^{-1}=(\lambda_{xy}^y)^{-1}$ for $x\leq y$. Let $\mu=\lambda \delta^2(\alpha)$; then $\lambda^{-1}\mu\in {\rm Im}(\delta^2)$ and $I_\lambda(P,\KK)\cong I_\mu(P,\KK)$. Moreover, $\mu_{xx}^x=\lambda_{xx}^x\cdot\frac{\alpha_{xx}\alpha_{xx}}{\alpha_{xx}}=1$, hence, 
\begin{equation}\label{e6}\mu_{xx}^x=\mu_{xy}^x=\mu_{xy}^y=\mu_{yy}^y\end{equation} 
for all $x\leq y$. Using this, one easily sees that $\sum\limits_{x\in P}f_{xx}$ becomes an identity element for $I_\mu(P,\KK)$. Thus, the algebra $I_\lambda(P,\KK)$ is unital as well, and the isomorphism of the previous proposition is of unital algebras.
\end{remark}

\subsection*{Basic representation theory of deformations}

As mentioned, the algebra $I_\lambda(P,\KK)$ enjoys properties similar to $I(P,\KK)$. As noted above, $\lambda$ can be assumed to satisfy equation (\ref{e6}), and in this case $f_{xx}$ become orthogonal idempotents. In fact, they form a system of primitive orthogonal idempotents as seen below; we include a brief proof for completion.

\begin{lemma}\label{l.reps}
Let $I_\lambda(P,\KK)$ be a deformation of an incidence algebra and assume (without loss of generality) $\lambda$ satisfies (\ref{e6}). Then \\
(i) $(f_{xx})_{x\in P}$ is a system of primitive orthogonal idempotents, \\
(ii) the left modules $P_y={\rm Span}_\KK\{f_{xy},\,x\leq y\}$ and right modules $P'_y={\rm Span}_\KK\{f_{yz}|\,y\leq z\}$ are the left and respectively right projective indecomposables, \\
(iii) The Jacobson radical is spanned by $f_{xy}$ for $x<y$.\\
(iv) $S_x$, the simple top of each $P_x$, is 1-dimensional, and $I_\lambda(P,\KK)$ is acyclic. \\
(v) $P_x$ (and $P_x'$) are distributive, equivalently, have finitely many submodules (or orbits when $\KK$ is infinite), and their lattice of submodules is the same with that of $P_x$ for the usual incidence algebra $I(P,\KK)$. \\
(iv) The submodules of $P_x$ are of the form $M_{S}={\rm Span}_\KK\{f_{yx}|y\leq z,\,z\in S\}$ for some set $S\subseteq \{z\in P| z\leq x\}$; equivalently, they are submodules generated by a set of elements $\{f_{zx}|z\in S\}\subset P_x$ corresponding to vertices $z\in S$.
\end{lemma}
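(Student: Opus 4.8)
The plan is to verify the six assertions essentially by direct computation with the deformed basis $f_{xy}$, using only the normalization \eqref{e6}, and then to reduce (v) and (iv) to the corresponding facts for the usual incidence algebra $I(P,\KK)$ via an explicit rescaling of the basis. First, for (i): from \eqref{e6} we have $f_{xx}*_\lambda f_{xx}=\mu_{xx}^x f_{xx}=f_{xx}$ and $f_{xx}*_\lambda f_{yy}=0$ when $x\neq y$ (as the product vanishes unless the inner indices match), so the $f_{xx}$ are orthogonal idempotents; their sum is the identity by the remark preceding the lemma. Primitivity will follow once we know (iv), since $f_{xx}I_\lambda(P,\KK)f_{xx}$ is spanned by $f_{xx}$ alone (there is no $f_{xy}$ with $x<y<x$), hence is a field, so $f_{xx}$ is primitive. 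For (ii): $I_\lambda(P,\KK)f_{yy}$ is spanned by those $f_{ab}*_\lambda f_{yy}$ which are nonzero, i.e. $b=y$, giving exactly $\mathrm{Span}_\KK\{f_{xy}\mid x\leq y\}=P_y$; this is a direct summand of the regular module because $1=\sum_x f_{xx}$ decomposes it, so $P_y$ is projective, and it is indecomposable because $\End(P_y)\cong f_{yy}I_\lambda(P,\KK)f_{yy}=\KK$. The right-module statement is symmetric.

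For (iii): let $N=\mathrm{Span}_\KK\{f_{xy}\mid x<y\}$. One checks $N$ is a two-sided ideal (any product $f_{xy}*_\lambda f_{yz}$ with $x<y$ or $y<z$ lands in $N$), that $N$ is nilpotent (since $P$ is finite, any product of length exceeding the longest chain in $P$ vanishes), and that $I_\lambda(P,\KK)/N\cong\prod_{x\in P}\KK$ is semisimple; hence $N=\mathrm{Jac}(I_\lambda(P,\KK))$. Then (iv) is immediate: $S_x=P_x/NP_x$ is spanned by the image of $f_{xx}$, hence one-dimensional, so $\End(S_x)=\KK$; acyclicity follows because $f_{yy}Nf_{xx}\neq 0$ forces $y<x$ strictly (there is no $f_{yx}$ with $x\leq y$ and $x<y$ simultaneously), so the Ext quiver has no oriented cycles — equivalently $[NP_x:S_x]=0$, as computed via $[P_x:S_y]=\dim(f_yI_\lambda f_x)=0$ unless $y\leq x$, with equality $0$ when $y=x$ beyond multiplicity one.

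For (v) and the final assertion, the key move is the rescaling. Apply the argument already used in the proof of the proposition preceding the lemma: since (by the Remark following the associativity proposition) $\lambda$ may be replaced by a cohomologous $\mu$ satisfying \eqref{e6}, and since every class in $H^2$ we care about is represented this way, define a $\KK$-linear bijection $\Phi\colon I_\lambda(P,\KK)\to I(P,\KK)$ on basis elements by $\Phi(f_{xy})=c_{xy}f_{xy}$ for suitable scalars $c_{xy}\in\KK^*$ chosen along a maximal chain decomposition of each interval; the point is not that $\Phi$ is an algebra map (in general it is not, as $\lambda$ may be a nontrivial cocycle) but that it is a bijection carrying the basis of $P_x$ for $I_\lambda(P,\KK)$ to the basis of $P_x$ for $I(P,\KK)$, and that the submodule lattice of $P_x$ is already determined combinatorially. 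Concretely, a subspace $M\subseteq P_x$ is a submodule iff it is spanned by a subset of $\{f_{yx}\mid y\leq x\}$ closed under the action $f_{zy}\cdot f_{yx}=\lambda_{zx}^y f_{zx}$, i.e. iff the index set $S=\{y\mid f_{yx}\in M\}$ is a down-closed subset of $\{y\in P\mid y\leq x\}$; this description does not depend on $\lambda$ at all (the scalars $\lambda_{zx}^y$ are nonzero, so closure under the action is purely a condition on which indices appear). This simultaneously proves (iv)(last), proves that the lattice agrees with that of $I(P,\KK)$, and — combined with Theorem \ref{t1} (or the Corollary after it) — gives distributivity and finitely many submodules/orbits.

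The main obstacle is a small but genuine bookkeeping point in (v): one must argue carefully that a $\KK$-subspace of $P_x$ stable under $I_\lambda(P,\KK)$ is automatically spanned by basis vectors $f_{yx}$, rather than by mixed combinations. This follows because $P_x$ is multiplicity-free — each $S_y$ occurs at most once, which is part of what we are proving — so the isotypic/radical-layer filtration of $P_x$ has one-dimensional layers, and in a module with one-dimensional semisimple layers every submodule is a sum of layer-generators; one then identifies those generators with the $f_{yx}$ by tracking where each sits in the radical filtration $N^kP_x$. Everything else is routine verification with the deformed multiplication, and the reduction to $I(P,\KK)$ makes the lattice statement formal.
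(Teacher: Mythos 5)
Your overall route is essentially the paper's: parts (i)--(iv) are done exactly as there (orthogonality and primitivity from $\dim f_{xx}I_\lambda(P,\KK)f_{xx}=1$, the decomposition $I_\lambda(P,\KK)=\bigoplus_y P_y$, the radical spanned by the $f_{xy}$ with $x<y$, acyclicity because arrows follow strict order relations), and for (v)--(vi) the paper likewise reduces everything to the observation that submodules of $P_x$ are the $M_S$, while using the one-dimensionality of $\Hom(P_x,P_y)=f_{xx}I_\lambda(P,\KK)f_{yy}$ to invoke the squarefree criterion for distributivity. Your variant of (v), deducing distributivity and finiteness from the combinatorial description of the lattice plus Theorem \ref{t1}, is fine and equivalent. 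The auxiliary rescaling map $\Phi$ is harmless but unnecessary, as you yourself note.

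However, the justification you give for the step you correctly single out as the crux --- that every $I_\lambda(P,\KK)$-stable subspace of $P_x$ is spanned by basis vectors $f_{yx}$ --- does not work as written. You argue that ``the isotypic/radical-layer filtration of $P_x$ has one-dimensional layers'' and that in such a module every submodule is a sum of layer generators. The radical layers of $P_x$ are \emph{not} one-dimensional in general: for the poset with $y<x$, $z<x$ and $y,z$ incomparable, $JP_x/J^2P_x\cong S_y\oplus S_z$ is two-dimensional. Moreover, a module all of whose radical layers are simple is uniserial, which $P_x$ is usually not; so the general principle you invoke either fails to apply or, read literally, is false for $P_x$. The correct argument is both simpler and does not pass through the radical filtration at all: since $f_{yy}P_x=\KK f_{yx}$ is at most one-dimensional (read off directly from the basis, so there is no circularity with multiplicity-freeness), any $m\in M$ decomposes as $m=\sum_{y}f_{yy}*_\lambda m$ with $f_{yy}*_\lambda m\in M\cap \KK f_{yx}$; hence $M=\bigoplus_y (M\cap\KK f_{yx})$ is spanned by the $f_{yx}$ it contains, and then $M=M_S$ for $S=\{z\mid f_{zx}\in M\}$, since $f_{yz}*_\lambda f_{zx}$ is a nonzero multiple of $f_{yx}$. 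With this replacement your proof is complete; as it stands, the stated justification of the key step would fail.
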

\begin{proof}
$f_{xx}$ are easily seen to be primitive, since $\dim(f_{xx}I_\lambda(P,\KK)f_{xx})=1$. There is an obvious direct sum decomposition
\begin{eqnarray*}
I_\lambda(P,\KK) & = & \bigoplus\limits_{y\in P}P_y
\end{eqnarray*}
which is obviously a direct sum of left modules, and then (ii) and (iii) follow. If $Q$ is the Hasse quiver of $P$, then it is easy to note that $J/J^2=\bigoplus\limits_{x\rightarrow y\in Q_1}\KK f_{xy}$, and this shows that $\Ext^1(S_y,S_x)$ is either $0$ dimensional, or otherwise it 1-dimensional exactly when there is $x\rightarrow y\in Q_1$ so (iv) follows. For the last part, since $\Hom(P_x,P_y)=f_{xx}I_\lambda(P,\KK)f_{yy}=\KK f_{xy}$ is one dimensional, the multiplicity $[P_y:S_x]\leq 1$, and so the squarefree condition of distributivity is satisfied.\\
(vi) is known for incidence algebras, and follows directly by computation: if $M$ is some submodule of $P_x$, let $S:=\{z\in P| f_{zx}\in M\}$. Then it is easy to see that $M=M_S$.
\end{proof}

\subsection*{Classification of deformations}

To prove a converse and classify deformations, we need another structural remark on cohomology. Let $\sigma\in \Aut_\leq(P)=\Aut(P,\leq)$ be an order automorphism of $P$ (a poset automorphism). Then $\sigma$ permutes the sets of $n$-simplices $C^n$, and its linear extension on $\ZZ C^n$ is easily seen to commute with $\partial^n$. Hence, $\sigma$ induces isomorphisms $\sigma:H_n(\Delta(P))\rightarrow H_n(\Delta(P))$ in homology, as well as in cohomology $\sigma:H^n(\Delta(P),\KK^*)\rightarrow H^n(\Delta(P),\KK^*)$. Hence, $\Aut(P,\leq)$ acts on $H^{\bullet}(\Delta(P),\KK^*)$ (on the right). The classification of deformations is as follows.

\begin{theorem}\label{t.clsdef}
The isomorphism classes of deformations $I_\lambda(P,\KK)$ of the incidence algebra of the poset $(P,\leq)$ are in one-to-one correspondence with the space $$H^2(\Delta(P),\KK^*)/\Aut(P,\leq)$$ of orbits of the action of $\Aut(P,\leq)$ on $H^2(\Delta(P),\KK^*)$.
\end{theorem}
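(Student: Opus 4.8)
The plan is to establish a bijection in two stages, exactly mirroring the cohomological formalism already set up. First, I would show that the assignment $\lambda\mapsto I_\lambda(P,\KK)$ descends to a well-defined map from $H^2(\Delta(P),\KK^*)$ to isomorphism classes of deformations. By the Proposition above, associativity of $*_\lambda$ is equivalent to $\lambda\in Z^2 = \ker(\delta^3)$, so deformations are indeed parametrized (before quotienting) by the cocycle group. The earlier Proposition shows that if $\lambda^{-1}\mu\in\im(\delta^2)$ then $I_\lambda(P,\KK)\cong I_\mu(P,\KK)$, so the map factors through $H^2(\Delta(P),\KK^*)=Z^2/B^2$. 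To bring in $\Aut(P,\leq)$, I would observe that for $\sigma\in\Aut(P,\leq)$, the linear map $f_{xy}\mapsto f_{\sigma(x)\sigma(y)}$ is an isomorphism $I_\lambda(P,\KK)\to I_{\sigma^*\lambda}(P,\KK)$, where $\sigma^*\lambda(x,y,z)=\lambda(\sigma x,\sigma y,\sigma z)$ is the cochain action described just before the theorem; hence $\lambda$ and $\sigma^*\lambda$ give isomorphic algebras, and the map factors through the orbit space $H^2(\Delta(P),\KK^*)/\Aut(P,\leq)$.

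Second, and this is where the real work lies, I would prove injectivity: if $I_\lambda(P,\KK)\cong I_\mu(P,\KK)$ as (unital) algebras, then $[\mu]$ lies in the $\Aut(P,\leq)$-orbit of $[\lambda]$. Given such an isomorphism $\varphi$, I would first normalize using Lemma \ref{l.reps}: both algebras have the same primitive orthogonal idempotents $f_{xx}$ up to the isomorphism, and by composing $\varphi$ with an inner automorphism I can arrange that $\varphi$ permutes the idempotents $\{f_{xx}\}$, inducing a permutation $\sigma$ of $P$. Because $\varphi$ must send $\Hom(P_x,P_y)$-type spaces $f_{xx}I_\lambda f_{yy}=\KK f_{xy}$ to the corresponding one-dimensional spaces, and must preserve the radical filtration (part (iii) of Lemma \ref{l.reps}) and hence the Hasse quiver, $\sigma$ is forced to be an order automorphism of $(P,\leq)$. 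After replacing $\mu$ by $(\sigma^{-1})^*\mu$ (legitimate by stage one), I may assume $\sigma=\mathrm{id}$, so $\varphi(f_{xy})=\alpha_{xy}f_{xy}$ for scalars $\alpha_{xy}\in\KK^*$ (the off-diagonal $f_{xy}$'s span one-dimensional spaces fixed setwise, and after a further diagonal rescaling I can take $\alpha_{xx}=1$). Then $\varphi(f_{xy}*_\lambda f_{yz})=\varphi(f_{xy})*_\mu\varphi(f_{yz})$ unwinds to exactly $\lambda_{xz}^y\alpha_{xz}=\mu_{xz}^y\alpha_{xy}\alpha_{yz}$, i.e. $\lambda^{-1}\mu=\delta^2(\alpha^{-1})$ up to the multiplicative conventions, so $[\lambda]=[\mu]$ in $H^2$. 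Combined with stage one, distinct orbits give non-isomorphic algebras and the correspondence is a bijection.

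I expect the main obstacle to be the normalization step in stage two: showing that an abstract algebra isomorphism can be adjusted by an inner automorphism so that it permutes the standard idempotents and then acts diagonally on the combinatorial basis. This requires knowing that the $f_{xx}$ are, up to conjugacy, a canonical complete set of primitive orthogonal idempotents (true since $I_\lambda(P,\KK)$ is basic and such sets are unique up to inner automorphism and relabeling), and that the relabeling respects the partial order — which follows from the fact that $x\leq y$ iff $f_{xx}I_\lambda(P,\KK)f_{yy}\neq 0$, a relation preserved by any isomorphism. Everything after that normalization is the routine cocycle computation already essentially carried out in the Proposition preceding the theorem. I would also remark that this result is the incidence-algebra analogue of the classical classification of twisted group algebras / graded-vector-space categories by $H^2$, with $\Aut(P,\leq)$ playing the role of the relabeling symmetry.
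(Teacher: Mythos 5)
Your proposal is correct and follows essentially the same route as the paper's proof: well-definedness via the two preceding propositions together with the relabeling isomorphism $f_{xy}\mapsto f_{\sigma(x)\sigma(y)}$, and injectivity by conjugating a given isomorphism so that it matches up the complete sets of primitive orthogonal idempotents (after normalizing the cocycles as in Lemma \ref{l.reps}), noting that the induced permutation preserves the order because $x\leq y$ exactly when $f_{xx}A f_{yy}\neq 0$, and then unwinding the resulting diagonal map $f_{xy}\mapsto \alpha_{xy}f_{xy}$ into the coboundary relation $\lambda_{xz}^y\alpha_{xz}=\alpha_{xy}\alpha_{yz}\mu_{xz}^y$. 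The paper carries out the same normalization by explicit conjugation (citing the standard conjugacy of complete sets of primitive orthogonal idempotents), so there is no substantive difference.
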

\begin{proof}
For $\sigma\in \Aut(P,\leq)$ and $\lambda\in H^2(\Delta(P),\KK^*)$ denote the above described action by $\lambda{}^\sigma=\lambda(\sigma(\bullet),\sigma(\bullet),\sigma(\bullet))$.  The if part is straightforward: if $\lambda{}^\sigma=\mu$ modulo $B^2$, we first get $I_{\lambda{}^\sigma}(P,\KK)\cong I_{\mu}(P,\KK)$, and then note that $f_{xy}\mapsto f_{\sigma^{-1}(x)\,\sigma^{-1}(y)}$ is an isomorphism $I_{\lambda}(P,\KK)\cong I_{\lambda{}^\sigma}(P,\KK)$.\\
Now let $\phi:I_\lambda(P,\KK)\rightarrow I_\mu(P,\KK)$ be an isomorphism, and to avoid confusion, let $e_{xy}$ denote the elements of the basis of the first algebra $I_\lambda(P,\KK)$; thus $e_{xy}*_\lambda e_{yz}=\lambda_{xz}^ye_{xz}$ for $x\leq y\leq z$. Also, write $*_\mu=\cdot$ for brevity. We may assume that $\lambda,\mu$ satisfy (\ref{e6}) and so the elements $e_{xx}\in I_\lambda(P,\KK)$ and $f_{xx}\in I_\mu(P,\KK)$ are primitive orthogonal idempotents. Let $h_{xy}=\phi(e_{xy})$. Since both $(f_{xx})_x$ and $(h_{xx})_x$ are primitive orthogonal idempotents in $I_\mu(P,\KK)$, it follows that there is an invertible $a\in I_\mu(P,\KK)$ and a permutation $\sigma$ of $P$ such that $a^{-1} h_{xx} a\cong f_{\sigma(x)\,\sigma(x)}$  
(for example by \cite[Lemma 10.3.6]{HGK}; $\sigma$ is obtained a permutation such that $I_\mu(P,\KK)h_{xx}=I_\mu(P,\KK)f_{\sigma(x)\,\sigma(x)}$, etc.).  Then 
$$f_{\sigma(x)\sigma(x)} I_\mu(P,\KK) f_{\sigma(y)\sigma(y)}=a^{-1}(h_{xx} I_\mu(P,\KK) h_{yy}) a$$
(since $a^{-1}I_\mu(P,\KK)a=I_\mu(P,\KK)$). Therefore, the left hand side of the above equation is nonzero if and only if the right hand side is so, and this shows $\sigma(x)\leq \sigma(y) \Leftrightarrow x\leq y$, so $\sigma\in \Aut(P,\leq)$. Let $\beta=\mu{}^\sigma$ and $g_{xy}=f_{\sigma(x)\sigma(y)}$, so \begin{equation}\label{e.8}
g_{xy}g_{yz}=\beta_{xz}^yg_{xz}.
\end{equation} Note that $$g_{xx}(a^{-1}h_{xy}a)g_{yy}=(a^{-1}h_{xx}a)(a^{-1}h_{xy}a)(a^{-1}h_{yy}a)=\beta_{xy}^x\beta_{xy}^ya^{-1}h_{xy}a=a^{-1}h_{xy}a$$ 
by (\ref{e6}). An easy computation (using that $g_{xy}g_{zt}=0$ if $y\neq t$ and that $g_{xy}$ is a basis) shows that $a^{-1}h_{xy}a$ is a non-zero multiple of $g_{xy}$ for $x\leq y$; write $a^{-1}h_{xy}a=\alpha_{xy}g_{xy}$, $\alpha_{xy}\in \KK^*$. Using this and formula (\ref{e.8}), the equation $\phi(e_{xy}*_\lambda e_{yz})=\phi(e_{xy})\phi(e_{yz})$, after conjugation by $a$, yields  $\lambda_{xz}^y\alpha_{xz}=\alpha_{xy}\alpha_{yz}\beta_{xz}^y$. This means $\lambda \beta^{-1}\in {\rm Im}(\delta^2)$. Hence, $\beta\lambda^{-1}=(\mu{}^\sigma )\lambda^{-1}=1$ in $H^2(\Delta(P),\KK^*)$, i.e. $\mu{}^\sigma =\lambda$ in $H^2(\Delta(P),\KK^*)$. 
\end{proof}

We note an example where the incidence algebra has non-trivial deformations.

\begin{example}
Consider the poset given by the following Hasse quiver; the two diagrams represent the same poset. The solid arrows in the first diagram are the arrows of the Hasse quiver, and the dotted arrows in diagram represent relations that are not minimal (so they are not arrows in the Hasse quiver).
{$$\xymatrix{ & & 1\ar@(l,u)[ddll]\ar@(r,u)[ddrr] \ar@{..>}@/_{.5pc}/[dl] \ar@{..>}@/^{1pc}/[dddr]& &  & & & \\
& 5 &  & &   &  & 5 & 6 \\
3\ar@/^/[ur]\ar@/_{1.5pc}/[drrr] & & & & 4 \ar@/_{1.5pc}/[ulll]\ar@/^/[dl] & & 3 \ar[u]\ar[ur]|\hole & 4\ar[u]\ar[ul] \\
& &  & 6 &      & & 1\ar[u]\ar[ur]|\hole & 2\ar[u]\ar[ul] \\
& & 2\ar@(l,d)[uull]\ar@(r,d)[uurr] \ar@{..>}@/_{.5pc}/[ur]\ar@{..>}@/^{1pc}/[uuul]& &     & & &\\
}$$}
\noindent While the second diagram is more representative combinatorially, the first ``sphere" diagram shows visually that the simplicial realization of this poset is a 2-sphere; the dotted arrows belong to triangles which are the 2-simplices of this complex (there are 8 such 2-simplices). Hence, we have $H^2(\Delta(P),\KK^*)=\KK^*$ (for example, by universal coefficients). The automorphism group of this poset is easily seen to be $\ZZ/2\times \ZZ/2\times \ZZ/2$ (the pairs $(1,2)$, $(3,4)$, $(5,6)$ can be interchanged within themselves in any way, but no other permutation preserving the order can be constructed). It is finite and it has infinitely many orbits on $\KK^*$ ($\KK$ is assumed infinite; in fact, it can be shown the action is trivial). Thus, this poset algebra has infinitely many non-isomorphic deformations.
\end{example}

In fact, examples of non-trivial deformations are abundant; we will make more topological remarks in the next section when we look at representations of incidence algebras.

\subsection*{Automorphisms of deformations}

We make a brief remark about automorphisms of the deformations of incidence algebras. The automorphisms groups of incidence algebras have been studied by many authors \cite{Bc,DK,Sp0,Sp,S} (see also \cite{SD}), starting with Stanley's initiating paper \cite{S}. In general, there are three ``basic" types of automorphisms: \\
(1) the inner automorphisms ${\rm Inn}(I(P,\KK))$, which as a group is a quotient of $U(I(P,\KK))$ (units) by the central subgroup $(\KK^*)^t$, where $t$ is the number of connected components of $P$ (algebra blocks of $I(P,\KK)$); \\
(2) automorphisms induced by poset permutations $\Aut(P,\leq)$ (automorphisms of $(P\leq)$);\\  
(3) automorphisms determined by multiplicative functions, i.e. by $\alpha\in H^1(\Delta(P),\KK^*)$ ($\alpha_{xz}=\alpha_{xy}\alpha_{yz}$ for $x\leq y\leq z$) via $f_{xy}\mapsto \alpha_{xy}f_{xy}$. Then any automorphism of the incidence algebra is a product of three automorphisms, one of each type. We note that all these automorphisms remain automorphisms in the case of a deformation of an incidence algebra; indeed, taking a poset automorphism $\sigma\in \Aut(P,\leq)$ and inducing a linear map on $I_{\lambda}(P,\KK)$ amounts to acting with $\sigma$ on $\lambda\in H^2$, which does not change the isomorphism type (and hence the corresponding change of basis produces an automorphism); similarly, for $\alpha\in H^1(\Delta(P),\KK)$, the change of basis $f_{xy}\mapsto \alpha_{xy}f_{xy}$ amounts to changing $\lambda$ for $\delta^2(\alpha^{-1})\lambda$, preserving the isomorphism type again. Hence, the proof of Theorem \ref{t.clsdef} above in fact recovers the results on automorphisms of incidence algebras and can be interpreted as a statement on automorphisms of deformations: 

\begin{proposition}
Every automorphism of a deformation of an incidence algebras is a product of an inner automorphism, an automorphism induced by a  poset permutation $\sigma\in\Aut(P,\leq)$ (i.e. a poset automorphism) and one automorphism induced by some $1$-cocycle in $H^1(\Delta(P),\KK^*)$ as above. Moreover, the outer automorphisms group ${\rm Out}(I_\lambda(P,\KK))=\Aut(I_\lambda(P,\KK))/{\rm Inn}(I_\lambda(P,\KK))$ is isomorphic to $H^1(\Delta(P),\KK^*)\rtimes \Aut(P)$ just as it is in the non-deformed case, . 
\end{proposition}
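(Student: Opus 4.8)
The plan is to bootstrap directly off the proof of Theorem \ref{t.clsdef}. The key observation is that the argument there, applied to an \emph{endomorphism} $\phi\colon I_\lambda(P,\KK)\to I_\lambda(P,\KK)$ which is an automorphism, already extracts all the data we need; we merely reorganize its conclusion. So first I would let $\phi\in\Aut(I_\lambda(P,\KK))$ be arbitrary (after normalizing $\lambda$ to satisfy (\ref{e6})), set $h_{xy}=\phi(e_{xy})$, and invoke the lifting of systems of primitive orthogonal idempotents (e.g. \cite[Lemma 10.3.6]{HGK}): there is a unit $a\in I_\lambda(P,\KK)$ and a permutation $\sigma$ of $P$ with $a^{-1}h_{xx}a=f_{\sigma(x)\sigma(x)}$. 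As in the proof of Theorem \ref{t.clsdef}, comparing the nonvanishing of the corresponding $\Hom$-spaces forces $\sigma\in\Aut(P,\leq)$. Conjugation by $a$ is an inner automorphism $\iota_a$, and $\sigma$ induces the poset-permutation automorphism $\psi_\sigma\colon f_{xy}\mapsto f_{\sigma(x)\sigma(y)}$ (this is genuinely an automorphism of $I_\lambda(P,\KK)$ by the remarks preceding the Proposition, since acting by $\sigma$ on $\lambda\in H^2$ leaves the class fixed — indeed $\psi_\sigma$ realizes $I_{\lambda^\sigma}\cong I_\lambda$, and we may further absorb the resulting coboundary into another change of basis of type (3)). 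Then $\psi:=\psi_\sigma^{-1}\circ\iota_a^{-1}\circ\phi$ is an automorphism fixing every idempotent $f_{xx}$, hence sends each $f_{xy}$ into $f_{xx}I_\lambda(P,\KK)f_{yy}=\KK f_{xy}$; write $\psi(f_{xy})=\alpha_{xy}f_{xy}$ with $\alpha_{xy}\in\KK^*$. Multiplicativity of $\psi$ together with equations (\ref{e.8})/(\ref{e4.1}) gives $\alpha_{xz}=\alpha_{xy}\alpha_{yz}$ for $x\leq y\leq z$, i.e. $\alpha\in Z^1(\Delta(P),\KK^*)$; thus $\psi$ is precisely an automorphism of type (3). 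Unwinding, $\phi=\iota_a\circ\psi_\sigma\circ\psi$, which is the claimed factorization.

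For the ``moreover'' part I would identify the three families of automorphisms as subgroups of $G:=\Aut(I_\lambda(P,\KK))$ and compute the quotient $G/\mathrm{Inn}$. The inner automorphisms form the normal subgroup $N=\mathrm{Inn}(I_\lambda(P,\KK))\cong U(I_\lambda(P,\KK))/Z$, where $Z=(\KK^*)^t$ is the group of central units ($t$ the number of connected components of $P$); normality is automatic. The type-(3) automorphisms give a subgroup isomorphic to $Z^1(\Delta(P),\KK^*)$ via $\alpha\mapsto\psi_\alpha$, and a coboundary $\alpha=\delta^2(\beta)$, $\beta\in E^1$ with $\beta(x,x)=1$, yields $\psi_\alpha(f_{xy})=\beta_x\beta_y^{-1}f_{xy}=\iota_c(f_{xy})$ for the diagonal unit $c=\sum_x\beta_x f_{xx}$ — so modulo $N$ the type-(3) automorphisms contribute exactly $Z^1/B^1=H^1(\Delta(P),\KK^*)$. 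The type-(2) automorphisms $\psi_\sigma$ give a copy of $\Aut(P,\leq)$ (distinct $\sigma$ give automorphisms that differ by a non-inner one, since a $\psi_\sigma$ permuting idempotents nontrivially cannot be inner — inner automorphisms fix the isomorphism classes of the $f_{xx}I_\lambda f_{xx}$ in a way compatible with conjugation, and one checks $\psi_\sigma\in N$ forces $\sigma=\mathrm{id}$ by comparing the action on $\{I_\lambda f_{xx}\}$). The first paragraph's factorization shows $G=N\cdot(\text{type-3})\cdot(\text{type-2})$, hence $\mathrm{Out}(I_\lambda(P,\KK))=G/N$ is generated by the images of the type-(3) and type-(2) subgroups, giving a surjection $H^1(\Delta(P),\KK^*)\rtimes\Aut(P,\leq)\twoheadrightarrow\mathrm{Out}(I_\lambda(P,\KK))$; the semidirect product structure comes from $\psi_\sigma\psi_\alpha\psi_\sigma^{-1}=\psi_{\alpha^\sigma}$, which is the natural action of $\Aut(P,\leq)$ on $H^1$. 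Injectivity follows because an element of $H^1\rtimes\Aut(P)$ mapping into $N$ must, after projecting to $\Aut(P)$, be trivial (by the idempotent argument just given), and then its $H^1$-part must be a coboundary (a type-(3) automorphism that is inner is, by the diagonal-unit computation, exactly a coboundary).

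The main obstacle I anticipate is the bookkeeping in the ``moreover'' part: carefully verifying that the three subgroups generate $G$ with the stated intersections, and in particular pinning down which type-(3) automorphisms are inner. The identity ``$\psi_\alpha$ inner $\iff$ $\alpha$ is a coboundary'' is the crux, and requires knowing the inner automorphisms of $I_\lambda(P,\KK)$ precisely enough — concretely, that any inner automorphism fixing all the $f_{xx}$ is conjugation by a unit in $\bigoplus_x f_{xx}I_\lambda(P,\KK)f_{xx}=\KK^n$, i.e. by a ``diagonal'' unit, which reduces $\alpha$ to a coboundary $\delta^2(\beta)$. This is where one uses that $I_\lambda$ is basic and $\End_A(S)=\KK$ for each simple. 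The existence/factorization statement itself is essentially a re-reading of the proof of Theorem \ref{t.clsdef} and should go through with no new ideas; everything genuinely deformation-specific (that acting by $\sigma$ or changing basis by a $1$-cochain preserves the isomorphism type) has already been recorded in the paragraph preceding the Proposition, so it can simply be cited.
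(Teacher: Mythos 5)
Your proposal is correct and is essentially the paper's own argument: the paper offers no separate proof, merely noting that the proof of Theorem \ref{t.clsdef} (run with $\mu=\lambda$, i.e.\ for an automorphism) together with the preceding remarks on the three types of automorphisms yields the statement, and your two paragraphs simply spell out that re-reading, including the $\mathrm{Out}$ computation (coboundaries $=$ diagonal inner automorphisms, $\psi_\sigma$ inner only for $\sigma=\mathrm{id}$, the conjugation action giving the semidirect product) which the paper leaves implicit. The only bookkeeping point to watch is the one you yourself flag: the scalars $\alpha_{xy}$ extracted from the Theorem \ref{t.clsdef} computation satisfy $\delta^2(\alpha)=\lambda\,(\lambda^{\sigma})^{-1}$ rather than being a cocycle on the nose, so the ``permutation'' factor must be taken to be the correspondingly twisted relabeling (absorbing that coboundary, which is a $1$-cochain, not itself of type (3)), after which the remaining diagonal factor is indeed given by a $1$-cocycle and your factorization and $\mathrm{Out}$ argument go through exactly as in the paper.
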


The nature of the extension of the outer automorphism by the inner automorphism (creating the automorphism group) is related to the groups $Z^1(\Delta(P),\KK^*)$ and $B^1(\Delta(P),\KK^*)$ \cite{DK}. 

Similarly, one can extend results existing in literature on derivations of incidence algebras; see for example \cite{Bc}. Note that the outer derivations of an incidence algebra are described by $HH^1(I(P,\KK),I(P,\KK))$, which is isomorphic to $H^1(\Delta(P),\KK)$ by the Gerstenhaber-Schack isomorphism \cite{GS}, and any derivation of an incidence algebra is a sum of an inner one and one induced by an additive $\theta\in B^1(I(\Delta(P),\KK))$: $\theta(x,z)=\theta(x,y)+\theta(y,z)$ for $x\leq y\leq z$. It would be interesting to compute the Hochschild cohomology of deformations of incidence algebras; it is tempting to conjecture that $HH^*(I_\lambda(P,\KK),I_\lambda(P,\KK))=H^*(\Delta(P),\KK)=HH^*I((P,\KK),I(P,\KK))$ as the \cite{GS} technique and calculations seems to apply with a suitable modification (twist) by $\lambda$. 



\section{Representation theory of deformations of incidence algebras}\label{s.rep}

We start our considerations with the following easy remark. It shows that the distributivity property is in a way at the other end of the spectrum from the property of being faithful. First, we note that the condition $[P(S):S]=1$ for all simple $A$-modules $S$, is automatic when the algebra $A$ is acyclic; 
in fact, $[P(S):S]=1$ is equivalent to $P(S)$ being endotrivial $\End_A(P(S))=\KK$. We will come back to this type of condition later too.

\begin{proposition}\label{p.6.1}
Let $A$ a finite dimensional algebra and let $M$ be finite dimensional representation of $A$. The following hold. \\
(i) If $M$ is faithful, then $[M:S]\geq 1$ for all simple modules $S$. \\
(ii) Suppose either $A$ is acyclic, or more generally, $[P(S):S]=1$ for all simple modules $S$. Then $M$ is distributive if and only if $[M:S]\leq 1$ for all simple modules $S$. \\
Consequently, $M$ is a faithful distributive representation if and only if the multiplicity of every simple module in the series of $M$ is $1$. 
\end{proposition}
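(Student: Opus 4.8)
The plan is to prove the three parts in order, with parts (i) and (ii) being quick and the final equivalence following immediately by combining them.

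For part (i), I would argue contrapositively: suppose $[M:S]=0$ for some simple module $S=S_i=\top(Ae_i)$. Then no composition factor of $M$ is isomorphic to $S_i$, which means $e_i M = 0$ (since the composition factors of $e_iM$ as a module over $e_iAe_i$ correspond to copies of $S_i$ in $M$, or more directly, $e_iM$ is the $i$-th ``coordinate'' of $M$ and vanishes exactly when $S_i$ is absent). But then the primitive idempotent $e_i$ annihilates $M$, so $e_i \in \ann(M)$ and $M$ is not faithful. This gives $[M:S]\geq 1$ for all $S$ when $M$ is faithful.

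For part (ii), one direction is general: if $M$ is distributive, then by the squarefree characterization of distributivity (Camillo's result, recalled in the paragraph following Theorem~\ref{t1}, or via the localization argument: $M$ distributive $\iff$ each $e_iM$ is uniserial over the local ring $e_iAe_i$), no simple module can appear with multiplicity $\geq 2$ in the composition series, so $[M:S]\leq 1$ for all $S$. The converse is where the hypothesis $[P(S):S]=1$ is used. Assuming $[M:S]\leq 1$ for all $S$, I want to show each $e_iM$ is uniserial over $e_iAe_i$. The key point is that under the hypothesis $[P(S_i):S_i]=1$, the local algebra $e_iAe_i = \End_A(Ae_i)^{\mathrm{op}}$ has $\dim(e_iAe_i)=[Ae_i:S_i]=1$, i.e.\ $e_iAe_i=\KK$; hence \emph{every} $e_iAe_i$-module is trivially uniserial (semisimple over a field, but also each $e_iM$, being squarefree, is at most... wait). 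The cleaner route: since $e_iAe_i=\KK$, the module $e_iM$ is just a $\KK$-vector space, and distributivity of $M$ is equivalent to each $e_iM$ being uniserial over $\KK$, i.e.\ at most $1$-dimensional; but $\dim_\KK e_iM = [M:S_i]\leq 1$ by hypothesis. So $M$ is distributive. I should note that $A$ acyclic implies $[P(S):S]=1$ (no nontrivial path from $S_i$ to itself in the Ext quiver forces $[Je_i:S_i]=0$), as already observed in the excerpt before Theorem~\ref{t.twosided}, so the ``more generally'' clause genuinely subsumes the acyclic case.

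The final ``consequently'' statement is then just the conjunction: $M$ is faithful and distributive $\iff$ ($[M:S]\geq 1$ for all $S$ by (i), together with $[M:S]\leq 1$ for all $S$ by (ii)) $\iff$ $[M:S]=1$ for all $S$. The one subtlety to be careful about is that the ``consequently'' presumably intends the blanket hypothesis that $A$ is acyclic (or $[P(S):S]=1$ for all $S$) to be in force, since part (ii) requires it; I would state that explicitly. The main (very mild) obstacle is making the reduction ``distributive $\iff$ each $e_iM$ uniserial over $e_iAe_i$'' airtight and then observing that acyclicity collapses $e_iAe_i$ to $\KK$ — but both of these are already spelled out in the excerpt (the localization remark after Theorem~\ref{t1}, and the $\End_A(Ae_i)=\KK$ computation before Theorem~\ref{t.twosided}), so the proof is essentially an assembly of facts already in hand rather than anything requiring new ideas.
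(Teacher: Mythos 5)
Your part (i) and the final assembly are fine, and your localization route is essentially sound, but there is a genuine error in how you distribute the hypothesis between the two directions of (ii). You claim that ``distributive $\Rightarrow [M:S]\leq 1$'' holds in general, justified by Camillo's squarefree characterization. That characterization only says that $M$ has no \emph{subquotient} isomorphic to $T\oplus T$; it does not bound the multiplicity of $S$ in the composition series, and the implication you assert is false without the hypothesis: $A=\KK[x]/(x^2)$ acting on $M=A$ is uniserial, hence distributive, yet $[M:S]=2$ (here $[P(S):S]=2$ and $A$ is not acyclic). The alternative justification in your parenthesis fails for the same reason: $e_iM$ uniserial over $e_iAe_i$ does not force $\dim e_iM\leq 1$ unless $e_iAe_i=\KK$. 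In fact you have the two directions exactly inverted: $[M:S]\leq 1\Rightarrow$ distributive is the direction needing no hypothesis (multiplicity at most one precludes any $S\oplus S$ subquotient, so Camillo applies; this is what the paper dismisses as ``obvious''), whereas distributive $\Rightarrow [M:S]\leq 1$ is precisely where $[P(S):S]=1$ enters — and it is where the paper spends all its effort, manufacturing an $S\oplus S$ subquotient of $M$ from two non-proportional morphisms $P(S)\rightarrow M$ by using $\End_A(P(S))=\KK$.

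That said, the repair is already contained in your own ``cleaner route'': under the hypothesis (and the paper's standing Schur assumption) $\dim(e_iAe_i)=[Ae_i:S_i]=1$, so $e_iAe_i=\KK$, and the localization criterion recalled after Theorem \ref{t1} (``$M$ distributive if and only if each $e_iM$ is uniserial over $e_iAe_i$'') then reads ``$M$ distributive if and only if $\dim_\KK e_iM\leq 1$ for all $i$'', i.e. $[M:S_i]\leq 1$ for all $i$, since $\dim_\KK e_iM=[M:S_i]$. This single chain of equivalences proves both directions of (ii) at once, and it is a genuinely shorter argument than the paper's explicit construction, at the price of leaning on the localization equivalence that the paper only sketches. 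If you restructure (ii) around that chain and delete the claim that the forward direction is hypothesis-free, your proof is correct.
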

\begin{proof}
(i) is well known, as when $M$ is faithful, there is an embedding $A\hookrightarrow M^n$. The if part in (ii) is obvious. Suppose $M$ is distributive and $[M:S]\geq 2$ for some $S$ with $P(S)=Ae$. 
Hence, $\dim(eM)=\dim_A(\Hom(Ae,M))\geq 2$ and let $f,g:P(S)\rightarrow M$ be $A$-module morphisms with $f\neq \lambda g$ ($\lambda \in \KK^*$). Then ${\rm Im}(f)\neq {\rm Im}(g)$; otherwise, since $f:P(S)\rightarrow {\rm Im} f$ is a projective cover, we find $h\in \Aut(P(S))$ with $fh=g$, and since $\Aut(P)=\KK$, we'd get $f=\lambda g$, a contradiction. Let ${\rm Im}(f)=Ax$ and ${\rm Im}(g)=Ay$ for $x,y\in M$; these are local modules. \\
The idea is now to observe that the tops of $Ax$ and $Ay$ produce a 2-dimensional subquotient of $M$ isomorphic to $S\oplus S$.\\
Specifically, we show that $Ax\not\subseteq Ay+Jx$. Assume otherwise; then $x=ay+bx$ so $(1-b)x=ay$ for $b\in J=J(A)$; but then $1-b$ is invertible and this implies $x\in Ay$ so $Ax\subsetneq Ay$. If $P(S)\rightarrow Ay$ is a surjective morphism, then pulling back $Ax$ we get a (local) proper submodule of $P(S)$ with top isomorphic to $S$. Hence, $[JP(S):S]\geq 1$ and $[P(S):S]\geq 2$ a contradiction; 
similarly, we get that $Ay\not\subseteq Ax+Jy$. \\
Now, using this we see that $(Ax+Jy)/(Jx+Jy)$ and $(Jx+Ay)/(Jx+Jy)$ are non-zero submodules of $M/(Jx+Jy)$ and they are not equal submodules. Also, both are isomorphic to $S$ (since there is a surjective morphism $Ax/Jx\rightarrow (Ax+Jy)/(Jx+Jy)$ and this is bijective since $S$ is simple), so they form a direct sum inside $M/(Jx+Jy)$. Hence $M$ contains $S\oplus S$ as a subquotient and thus it is not distributive. This is a contradiction and the proof is finished.
\end{proof}


We now note the interpretation of the first cohomology group in representation theoretic terms. If $M$ is a faithful distributive representation of some acyclic basic pointed algebra $A$, then $[M:S]=1$ for every simple $S$; then $\dim(eM)=\dim(Ae,M)=1$. If $A=I_\lambda(P,\KK)$, then $f_{yy}M=\KK m_y$ (it is 1-dimensional spanned by some $m_y$). Then $f_{xy}\KK m_y=f_{xx}(f_{xy}\KK m_y)\subseteq f_{xx}M=\KK m_x$. Let $\alpha_{xy}$ be defined by $f_{xy}\cdot m_y=\alpha_{xy}m_x$ for $x\leq y$. Obviously, $f_{xy}\cdot m_z=0$ if $y\neq z$, and $(\alpha_{xy})_{x,y}$ determines the representation. Denote $M_\alpha$ the representation obtained for such a particular system of coefficients $\alpha\in E^1$ (it will need to satisfy a 1-cocycle condition as noted in what follows).


\begin{theorem}
If  $I_\lambda(P,\KK)$ has a faithful distributive representation, then $I_\lambda(P,\KK)\cong I(P,\KK)$ is a trivial deformation. Moreover, the faithful distributive representations of $I(P,\KK)$ are in 1-1 correspondence with $H^1(\Delta(P),\KK^*)$; more precisely, $M=M_\alpha$ is a representation exactly when $\alpha$ is a 1-cocycle, and $M_\alpha$ is uniquely determined up to isomorphism by the cohomology class $[\alpha]\in H^1(\Delta(P),\KK^*)$.
Under this bijection, the defining representation of $I(P,\KK)$ corresponds to the trivial 1-cocycle $[\alpha]=[1]$.
\end{theorem}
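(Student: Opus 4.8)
The plan is to unwind the associativity (2-cocycle) condition on $\lambda$ against the defining relations of a faithful distributive module and show that this forces $\lambda$ to be a coboundary, whence $I_\lambda(P,\KK)\cong I(P,\KK)$ by the proposition above. First I would start from a faithful distributive representation $M$ of $I_\lambda(P,\KK)$: by Proposition \ref{p.6.1}, faithfulness plus distributivity give $[M:S_x]=1$ for every $x\in P$, so $\dim(f_{xx}M)=1$ and we may pick a basis $m_x$ of $f_{xx}M$ for each $x$. As in the paragraph preceding the theorem, define scalars $\alpha_{xy}\in\KK$ by $f_{xy}\cdot m_y=\alpha_{xy}m_x$ for $x\le y$ (and $f_{xy}m_z=0$ for $y\ne z$), with $\alpha_{xx}=1$ after normalizing $\lambda$ to satisfy (\ref{e6}). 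The key step is to apply the associativity $f_{xy}*_\lambda f_{yz}=\lambda_{xz}^y f_{xz}$ to the vector $m_z$ for any chain $x\le y\le z$: the left side gives $f_{xy}(f_{yz}m_z)=\alpha_{yz}f_{xy}m_y=\alpha_{yz}\alpha_{xy}m_x$, and the right side gives $\lambda_{xz}^y\alpha_{xz}m_x$. Hence
$$\lambda_{xz}^y\,\alpha_{xz}=\alpha_{xy}\,\alpha_{yz}$$
for all $x\le y\le z$. Since $M$ is faithful, no $\alpha_{xy}$ with $x\le y$ can vanish (else $f_{xy}$ would annihilate $M$), so each $\alpha_{xy}\in\KK^*$. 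This identity says exactly that $\lambda=\delta^2(\alpha^{-1})$ — i.e. $\lambda$ is a $2$-coboundary in $E^\bullet$ — so by the proposition on coboundary-equivalent deformations, $I_\lambda(P,\KK)\cong I(P,\KK)$, proving the first assertion.

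For the classification of faithful distributive representations of $I(P,\KK)$ itself, I would run the same computation with $\lambda\equiv 1$: a choice of basis $(m_x)$ produces $\alpha\in E^1$ with $\alpha_{xx}=1$, and the associativity computation above now reads $\alpha_{xz}=\alpha_{xy}\alpha_{yz}$ for $x\le y\le z$, which is precisely the $1$-cocycle condition $\delta^2(\alpha)=1$; conversely, given any $1$-cocycle $\alpha\in Z^1(\Delta(P),\KK^*)$ one defines $M_\alpha$ by $f_{xy}m_y=\alpha_{xy}m_x$ and checks directly that this is a well-defined (associative) $I(P,\KK)$-module, faithful because every $\alpha_{xy}\ne 0$ and $\dim(f_{xx}M_\alpha)=1$, hence distributive by Lemma \ref{l.reps}(v) / Proposition \ref{p.6.1}. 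To pin down the bijection up to isomorphism I would show $M_\alpha\cong M_\beta$ iff $[\alpha]=[\beta]$ in $H^1$: an isomorphism $\varphi\colon M_\alpha\to M_\beta$ must send $f_{xx}M_\alpha$ to $f_{xx}M_\beta$, so $\varphi(m_x)=c_x m_x'$ with $c_x\in\KK^*$, and comparing $\varphi(f_{xy}m_y)$ with $f_{xy}\varphi(m_y)$ gives $c_x\beta_{xy}=\alpha_{xy}c_y$, i.e. $\alpha\beta^{-1}=\delta^2(c)\in B^1(\Delta(P),\KK^*)$; conversely any such $c$ defines an isomorphism. This establishes that $[\alpha]\mapsto M_\alpha$ is a well-defined bijection $H^1(\Delta(P),\KK^*)\to\{\text{faithful distributive reps}\}/\!\cong$.

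Finally, for the last sentence I would simply observe that the defining representation has underlying space $\KK^n$ with basis $(e_x)_{x\in P}$ and action $f_{xy}e_y=e_x$; taking $m_x=e_x$ yields $\alpha_{xy}=1$ for all $x\le y$, i.e. the trivial $1$-cocycle, which represents the identity class $[1]\in H^1(\Delta(P),\KK^*)$. The main (and really only) obstacle is conceptual rather than technical: recognizing that the bilinear associativity constraint, when tested against the one-dimensional weight spaces $f_{xx}M$, collapses to the simplicial cochain identity $\delta^2(\alpha^{-1})=\lambda$; once this dictionary between the module structure and $E^\bullet$ is set up, everything reduces to the two previously established propositions (coboundary-equivalence and isomorphism $\varphi(f_{xy})=\alpha_{xy}f_{xy}$) and a routine bookkeeping of change-of-basis scalars. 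One minor point to handle carefully is the normalization (\ref{e6}) ensuring $f_{xx}$ are genuine orthogonal idempotents so that the weight-space decomposition $M=\bigoplus_x f_{xx}M$ is valid and $\alpha_{xx}=1$; this is exactly why we may assume it without loss of generality.
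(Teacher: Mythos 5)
Your proposal is correct and follows essentially the same route as the paper: extract the scalars $\alpha_{xy}$ from the one-dimensional weight spaces $f_{xx}M$, read the associativity relation against $m_z$ to get $\lambda_{xz}^y\alpha_{xz}=\alpha_{xy}\alpha_{yz}$ (i.e.\ $\lambda=\delta^2(\alpha)$), then classify via the $1$-cocycle condition and the change-of-basis scalars $\theta_x$, with the defining representation corresponding to the trivial cocycle. Your explicit remark that faithfulness forces every $\alpha_{xy}\in\KK^*$ is a useful detail the paper leaves implicit, but it does not change the argument.
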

\begin{proof}
If $\alpha$ is as above, then writing the representation condition $(f_{xy}*_\lambda f_{yz})\cdot m_z=f_{xy}\cdot (f_{yz}\cdot m_z)$ yields $\lambda_{xz}^y\alpha_{xz}=\alpha_{xy}\alpha_{yz}$, which means $\lambda=\delta^2(\alpha)$, so $[\lambda]=[1]$ in $H^2(\Delta(P),\KK)$; this proves the first part. Furthermore, now picking $\lambda\equiv 1$ (constant function), a faithful distributive representation of $I(P,\KK)$ is given by $\alpha$ for which $\alpha_{xz}=\alpha_{xy}\alpha_{yz}$, i.e. $\alpha\in \ker\delta^2$. An isomorphism $\varphi:M_\alpha\rightarrow M_\beta$ of two such representations $M_\alpha,M_\beta$ given by $\alpha,\beta$ will induce $\varphi(eM_\alpha)=eM_\beta$ for any idempotent $e$, thus $\varphi(m_x)=\theta_x n_x$, $\theta_x\in \KK^*$. Hence, $\varphi(f_{xy}\cdot m_y)=f_{xy}\varphi(m_y)$ ($x\leq y$) implies $\alpha_{xy}\theta_x m_x=\beta_{xy}\theta_y m_x$, and so $\alpha_{xy}\beta_{xy}^{-1}=\theta_x^{-1}\theta_y$; the existence of such a $\theta$ is equivalent to $[\alpha]=[\beta]$ in $H^1(\Delta(P),\KK)$.\\
Finally, the last statement follows since the defining representation is constructed by $\alpha_{xy}=1$.
\end{proof}

We note that the above classification of faithful distributive representations is essentially identical to that of \cite{F1}, where the cohomology connection was not remarked (elements $\alpha\in Z^1$ were called multiplicative functions, a term preferred by combinatorialists). Our approach is direct, as it is an immediate consequence of the cohomological setup. Nevertheless, the geometric/topological intuition allows us to give plenty of non-trivial examples (as noted before, we chose to restrict to finite dimensional algebras). 

\begin{example}
Let $P$ be the poset whose Hasse diagram is 
$$\xymatrix{ 
c  & d \\
a \ar[u]\ar[ur] & b\ar[ul]\ar[u]
}$$
Obviously, $\Delta(P)$ is homeomorphic to $S^1$ (as topological spaces). 
Hence, $H^1(
\Delta(P),\KK^*)=\KK^*$ and this classifies faithful distributive representations. Explicitly, any such representation is given by a system of 4 scalars $\alpha_{ac},\alpha_{ad},\alpha_{bc},\alpha_{bd}$ and the product $\alpha_{ac}\cdot\alpha_{bc}^{-1}\cdot\alpha_{bd}\cdot\alpha_{ad}^{-1}\in \KK^*=H^1(\Delta(P),\KK^*)$ represents a complete invariant of this representation (some scalars can be exchanged for their inverses so that the relation looks like $\alpha_{ac}\cdot\alpha_{bc}\cdot\alpha_{bd}\cdot\alpha_{ad}\in \KK^*=H^1(\Delta(P),\KK^*)$). 
\end{example}

\begin{example}\label{e.3}
More generally, consider the poset with associated Hasse diagram
$$\xymatrix{ 
c_1  & c_2 & \dots & \dots & c_n\\
a_1 \ar[u]\ar[ur] & a_2\ar[ul]\ar[u]\ar[ur] & \dots\ar[ul]\ar@{}[ur]_(.3){.\,\,.\,\,.} & \dots\ar[ur] &  a_n\ar[ul]\ar[u]
}$$
Topologically this is a wedge of $n-1$ circles and hence the faithful distributive representations are parametrized by $H^1=(\KK^*)^{n-1}$.
\end{example}

\begin{remark} {\bf [Topological considerations]}\\
(1) In general, by universal coefficients, note that $H^1(\Delta(P),\KK^*)=\Hom(H_1(\Delta(P)),\KK^*)$ where $H_1(\Delta(P))=H_1(\Delta(P),\ZZ)$. Of course, $H_1$ is a finitely generated abelian group, and so $H^1$ decomposes in a direct product of copies of $\KK^*$ and $\Hom_\ZZ(\ZZ/p^n,\KK^*)$; this latter term depends on existence of roots of unity in $\KK$, which, in turn, depends on whether $\KK$ is algebraically closed or not and on the characteristic of $\KK$. For example, if $H_1(\Delta(P),\ZZ)=\ZZ/2\times \ZZ/p$ for an odd prime $p$, then $|H^1(\Delta(P),\QQ)|=2$, $|H^1(\Delta(P),\overline{\FF}_2)|=p$, $|H^1(\Delta(P),{\CC})|=2p$, so the number of faithful distributive representations depends on characteristic and algebraic closure properties, so that the same incidence algebra can have a different number of faithful indecomposable representations over different fields. \\
(2) Again, by universal coefficients $$H^2(\Delta(P),\KK^*)\cong \Hom_\ZZ(H_2(\Delta(P)),\KK^*)\oplus \Ext_\ZZ^1(H_1(\Delta(P)),\KK^*).$$
If $\KK$ is algebraically closed, then the group of units of $\KK^*$ is injective (direct sum of Pr$\rm\ddot{u}$fer groups $C_{p^\infty}$; when ${\rm char}(\KK)=q$ the group $C_{q^\infty}$ is missing but all other $C_{p^\infty}$ show up). Hence, in that case, $H^2(\Delta(P),\KK^*)\cong \Hom_\ZZ(H_2(\Delta(P)),\KK^*)$; otherwise, terms from the $\Ext$ part are possible as well. This means that it may be possible that $H_2(\Delta(P))=0$, but still the incidence algebra can have nontrivial deformations. \\
(3) Nevertheless, we note that {\it if $H_2(\Delta(P))$ is finite, then $I(P,\KK)$ has only finitely many deformations up to isomorphism} (in which case one can think of the incidence algebra as ``semi-rigid"). Indeed, in this case, the torsion part of $(\KK^*,\cdot)$ (i.e. the group of roots of unity) is a direct sum of a finite group and an injective abelian group with squarefree socle, and therefore  $\Hom_\ZZ(H_2(\Delta(P),\KK^*))$ is finite. Also, $\Ext_\ZZ^1(H_1(\Delta(P)),\KK^*)$ is finite, since only the torsion part of $H_1$ (which is finite, since $H_1(\Delta(P))$ is finitely generated) can contribute to this Ext group, which becomes an Ext of two finite abelian groups.\\
(4) We use a few well known classical topological facts to observe that the structure of the groups giving deformations and representations is quite arbitrary. Given any CW-complex which is a $\Delta$-complex, or more generally a simplicial set, one can take the second barycentric subdivision and obtain an abstract simplicial complex whose topological space (geometric realization) is homeomorphic to the realization of the original simplicial set (see for example, \cite[Section 2.1 and Appendix]{H}). Also, starting with an abstract simplicial complex $X$ (so each simplex is determined by its vertices), let $P=P(X)$ be the poset of its faces with inclusion as order; then the geometric realization $\Delta(P(X))$ is homeomorphic to $X$ \cite{W}. Since any (finite) sequence of finitely generated abelian groups can be realized as the (co)homology of some (finite) CW-complex, using also Quillen's well known equivalence of categories between simplicial sets and topological spaces up to homotopy, we see that the cohomology of $\Delta(P)$ is really arbitrary. This shows that there are posets in any of the above discussed situations, and with any prescribed number of deformations and distributive representations.
\end{remark}


We give now the main result on characterizations of deformations of incidence algebras. As noted, their projective indecomposables are distributive, and it this one of the  main properties characterizing this class of algebras. We recall another terminology, which is a natural extension of hereditary algebras: following Bautista, an algebra is said to be l-hereditary or locally hereditary \cite{Ba,Ri} if local submodules of projective indecomposable (equivalently projective) modules are projective. An algebra is l-hereditary if and only if every non-zero morphism between projective indecomposable (equivalently, local) modules is injective. 

\begin{remark}
We note that a locally hereditary algebra is necessarily acyclic. First, fix a decomposition $A=\bigoplus\limits_eAe$ and let $S_e=Ae/Je$. Note that if $\Ext^1(S_f,S_e)\neq 0$, then there is a length two indecomposable module $V$ and a short exact sequence $0\rightarrow S_e\rightarrow V\rightarrow S_f\rightarrow 0$, so there is a non-zero morphism $Ae\rightarrow Af$ which must be injective, but clearly not surjective. Hence, any arrow $S_f\rightarrow S_e$ in the Ext quiver corresponds to an (strict) embedding $A_e\hookrightarrow A_f$, and so an oriented cycle containing $S_e$ would produce a non-surjective embedding $Ae\hookrightarrow Ae$, impossible. 
\end{remark}

\begin{theorem}\label{t.definc}
Let A  be finite dimensional basic pointed $\KK$-algebra (with $\KK$ infinite). Then the following are equivalent. 
\begin{enumerate}
\item $A$ is a deformation of an incidence algebra.
\item $A$ is (left, equivalently, right) locally hereditary and semidistributive.
\item $A$ has finitely many two sided ideals and is (left or right) locally hereditary.
\item For every indecomposable projectives $P,Q,R$, $\dim\Hom(P,Q)\leq 1$ and the natural composition map $\Hom(P,Q)\times \Hom(Q,R)\rightarrow \Hom(P,R)$ is non-zero whenever the first two Hom spaces are non-zero.
\end{enumerate}
\end{theorem}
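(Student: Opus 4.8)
The plan is to prove the cycle of implications $(1)\Rightarrow(4)\Rightarrow(2)\Rightarrow(3)\Rightarrow(1)$, using the structural results already established. First, $(1)\Rightarrow(4)$ is essentially Lemma \ref{l.reps}: if $A=I_\lambda(P,\KK)$ with $\lambda$ normalized to satisfy (\ref{e6}), then $\Hom(P_y,P_x)=f_{xx}I_\lambda(P,\KK)f_{yy}=\KK f_{xy}$ is at most one-dimensional, and for $x\le y\le z$ the composite of the generators $f_{yz}$ and $f_{xy}$ is $\lambda_{xz}^y f_{xz}\neq 0$ since $\lambda_{xz}^y\in\KK^*$; so whenever both Hom spaces are nonzero the composition map is nonzero. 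For $(1)\Rightarrow(2)$ one also invokes Lemma \ref{l.reps}(v) (projective indecomposables of $I_\lambda(P,\KK)$ are distributive, hence $A$ is semidistributive by Remark \ref{r.semidistributive}) together with the observation, implicit in the proof of Lemma \ref{l.reps}, that $A$ is locally hereditary: a nonzero map $P_y\to P_x$ is a scalar multiple of $f_{xy}\mapsto$ multiplication, which is injective on $P_y={\rm Span}\{f_{ty}: t\le y\}$ since it sends $f_{ty}$ to a nonzero multiple of $f_{tx}$ when $t\le y\le x$ (note $t\le y$ and $y\le x$ force $t\le x$, and these $f_{tx}$ are linearly independent). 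So really $(1)$ implies all of $(2),(3),(4)$ at once; I would present $(1)\Rightarrow(4)$ and $(1)\Rightarrow(2)$ explicitly and note $(1)\Rightarrow(3)$ follows since an incidence algebra deformation has finitely many ideals (again from Lemma \ref{l.reps}(v) plus the decomposition $I=\bigoplus Ie_i$).

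Next, the easy equivalences among $(2),(3),(4)$ in the acyclic setting. By the Remark preceding the theorem, a locally hereditary algebra is automatically acyclic; and "locally hereditary" is equivalent to "every nonzero morphism between indecomposable projectives is injective" by definition. Given that, for indecomposable projectives $P=Ae$, $Q=Af$, we have $\dim\Hom(P,Q)=\dim(eAf)=[Q:S_e]$ (using the Schur/pointed hypothesis), so $\dim\Hom(P,Q)\le 1$ for all $P,Q$ is exactly the statement that every projective indecomposable is thin, i.e. distributive (Lemma \ref{l.reps}/Proposition \ref{p.6.1} type argument), i.e. $A$ is semidistributive. Under local heredity, the composition $\Hom(P,Q)\times\Hom(Q,R)\to\Hom(P,R)$ of two nonzero maps is a composite of two injections, hence injective, hence nonzero — so the second half of $(4)$ is free once we have local heredity. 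This shows $(2)\Leftrightarrow(4)$. For $(3)\Leftrightarrow(2)$: given local heredity, $A$ is acyclic, so Theorem \ref{t.twosided} gives that finitely many two-sided ideals forces the projective indecomposables to be distributive, i.e. $(3)\Rightarrow(2)$; conversely $(2)\Rightarrow(3)$ because a semidistributive acyclic algebra has finitely many two-sided ideals (the reformulated Theorem at the end of Section 3), noting $(2)$ includes local heredity which gives acyclicity.

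The substantive implication is $(2)\Rightarrow(1)$ (equivalently $(4)\Rightarrow(1)$), and this is where I expect the real work. The plan: assume $A$ is basic pointed, locally hereditary and semidistributive; write $A=\KK[Q]/I$ for $Q$ the Ext quiver and $I$ an admissible ideal. Local heredity gives acyclicity of $Q$ and, via the injectivity-of-projective-maps characterization, one shows $Q$ has no multiple arrows and is the Hasse quiver of a poset (quasi-ordered set) $P$: define $x\le y$ iff $\Hom(Ae_y,Ae_x)\neq 0$, i.e. iff $eAf\neq 0$; transitivity follows from the nonvanishing of the composition map (the hard part of $(4)$, which $(2)$ supplies through local heredity), antisymmetry from acyclicity. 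Since $\dim(e_xAe_y)\le 1$ for all $x,y$ — this is the thinness of projectives extracted from semidistributivity as above — picking a basis vector $g_{xy}$ of each nonzero $e_xAe_y$, the multiplication is forced to be $g_{xy}g_{yz}=\lambda_{xz}^y g_{xz}$ with $\lambda_{xz}^y\in\KK$, and it is in $\KK^*$ precisely because the composition map is nonzero — which again is exactly local heredity. Associativity of $A$ then forces $\lambda$ to be a $2$-cocycle (Proposition on associativity), and we conclude $A\cong I_\lambda(P,\KK)$. The main obstacle is verifying cleanly that local heredity + semidistributivity force the "one-dimensional $\Hom$ with nonzero composites" structure on all of $A$ (not just between adjacent vertices), i.e. that there are no extra relations beyond those of an incidence algebra and no degeneration of the structure constants to zero; this is a careful induction on path length in $Q$ using admissibility of $I$ and the injectivity of nonzero maps between projectives, and it is the technical heart of the argument.
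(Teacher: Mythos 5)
Your overall architecture is the paper's: Lemma \ref{l.reps} gives $(1)\Rightarrow(2),(3),(4)$, Theorem \ref{t.twosided} (with acyclicity coming from local heredity) gives $(2)\Leftrightarrow(3)$, and the hard direction is the reconstruction of a multiplicative ``incidence basis'' $q_{ef}$ spanning the one-dimensional spaces $eAf$, with the $2$-cocycle condition coming from associativity. Your verification of local heredity for $I_\lambda(P,\KK)$ via injectivity of the right-multiplication maps $P_y\to P_x$ is a harmless variant of the paper's argument (which instead shows directly that the local submodules $Af_{zx}$ are projective).

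There is, however, one genuine gap in the logic as written: nothing is actually derived \emph{from} condition $(4)$. You assert $(2)\Leftrightarrow(4)$, but the argument you give only proves $(2)\Rightarrow(4)$ (``under local heredity the composite of two injections is nonzero'' presupposes local heredity, which is not part of $(4)$), and your reconstruction is explicitly set up from $(2)$: in particular you get antisymmetry of $\leq$ ``from acyclicity,'' which you obtained from local heredity -- again unavailable under $(4)$ alone. So as stated, $(4)$ is implied by $(1),(2),(3)$ but never shown to imply any of them, and the four-fold equivalence does not close. The repair is exactly what the paper does: run the reconstruction directly from $(4)$. Its hypotheses are precisely what the reconstruction uses ($\dim(eAf)\leq 1$ and nonvanishing of products of nonzero components), and antisymmetry of the relation $e\leq f\iff eAf\neq 0$ follows not from acyclicity but from basicness: if $eAf\neq 0\neq fAe$, the nonzero composites land in the one-dimensional spaces $eAe=\KK e$ and $fAf=\KK f$, producing mutually inverse maps $Ae\to Af\to Ae$, so $Ae\cong Af$, contradicting that $e\neq f$ are members of a complete set of primitive idempotents of a basic algebra. (Equivalently, the same computation shows $(4)$ forces every nonzero map $Ae\to Af$ to be injective, giving $(4)\Rightarrow(2)$ directly.) Relatedly, the ``technical heart'' you anticipate -- an induction on path length in a presentation $\KK[Q]/I$ -- is not needed at all: one never presents $A$ by quiver and relations; the decomposition $A=\bigoplus_{e,f}eAf$ together with $\dim(eAf)\leq 1$ and nonzero composites already exhibits the multiplicative basis, so $A$ is literally $I_\lambda(P,\KK)$ and the only thing left to check is that associativity makes $\lambda$ a $2$-cocycle, as in the paper.
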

\begin{proof}
(1)$\Rightarrow$(2),(3),(4) This is similar to the incidence algebra case; as we have seen, $P_x=P(S_x)$ are distributive. Also, by the Lemma \ref{l.reps}, the submodules of $P_x$ are the $M_S$ generated by $f_{zx}$ for $z\in S$; we may assume the elements of $S$ are not comparable (since if $a,b\in S$ and $a\leq b$, it is immediate by the definition that $M=M_{S\setminus \{a\}}$). Let $S'=\{y\in P | \exists z\in S {\rm \,s.t.\,}y<z\}$; it is easy to see that  $M_S/M_{S'}\cong \bigoplus\limits_{z\in S}S_z$. This can be local only when $|S|=1$, so then $M_S=A f_{zx}$ for $S=\{z\}$. This cyclic and has a basis consisting of $\{f_{yx}|y\leq z\}$, and so $\dim(Af_{zx})=\dim(P_z)$, since $P_z$ has basis $\{f_{yz}|y\leq z\}$. The Jacobson radical of $Af_{zx}$ is obtained by multiplication with $f_{uv}$ for $u<v$, and so we see that the top of $Af_{zx}$ is simple isomorphic to $S_z=\KK f_{zz}$. Hence, there is a projective cover map $P_z\rightarrow Af_{zx}$, which is an isomorphism by dimension, so $Af_{zx}$ is projective.\\ 
(2)$\Leftrightarrow$(3) follows by Theorem \ref{t.twosided}.\\
(4)$\Rightarrow$(1) The reconstruction is an argument similar to that present in \cite{AA1,AA2} for such ``square-free" algebras (\cite{ACMT}). $\dim(\Hom(P,Q))\leq 1$ for all indecomposable projectives $P,Q$ is equivalent to $[P:S]\leq 1$ for all projective indecomposables $P$ and simple modules $S$, i.e. projective indecomposables are distributive (the trivial part of Proposition \ref{p.6.1}). Let $\mathcal{P}$ be a set of primitive orthogonal idempotents, so $\dim(eAf)\leq 1$ for all $e,f\in \mathcal{P}$. Define $e\leq f$ if and only if $eAf\neq 0$. The non-zero map in hypothesis means that $e,f,g\in \mathcal{P}$, $e\leq f$ and $f\leq g$ implies $e\leq g$. Thus $(\mathcal{P},\leq)$ is a poset, and now if $\{q_{ef}\}$ is a basis of $eAf$ if $e\leq f$ (so $eAf=\KK q_{ef}$ for $e\leq f$), one easily obtains that $q_{ef}\cdot q_{fg}=\lambda(e,f,g)q_{eg}$ for some $\lambda(e,f,g)\in\KK^*$ ($e\leq f\leq g$). This means that $A$ is a deformation of an incidence algebra (in particular, $A$ is acyclic).\\
(2)$\Rightarrow$(1) is similar: the semidistributive property implies $[P:S]\leq 1$ by Proposition \ref{p.6.1}, so $\dim(eAf)\leq 1$ for primitive idempotents $e,f$. Proceed as in (4)$\Rightarrow$(1) and note the locally hereditary condition means that maps between $Ae$'s are either 0 or injective, and hence the second condition in (4) follows. The proof then concludes as in the previous case.
\end{proof}

We can now give the consequences on characterizations of incidence algebras. As before, we say that a representation $M$ of an algebra is multiplicity-free if $[M:S]\leq 1$ for every simple $S$. We say that a representation is a 1-representation if $[M:S]=1$ for all simple modules $S$. 

\begin{theorem}\label{t.inc}
Let $A$ be a basic pointed algebra. The following are equivalent:
\begin{enumerate}
\item $A$ is an incidence algebra of a poset.
\item $A$ is acyclic and has a faithful distributive representation. 
\item $A$ has a faithful thin (i.e. squarefree) representation, equivalently, $A$ has a faithful 1-representation.
\end{enumerate}
\end{theorem}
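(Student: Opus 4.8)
The plan is to close the cycle $(1)\Rightarrow(2)\Rightarrow(3)\Rightarrow(1)$, with essentially all the weight on the last implication, which I would reduce to Theorem~\ref{t.definc} together with the earlier theorem classifying the faithful distributive representations of a deformation $I_\lambda(P,\KK)$. For $(1)\Rightarrow(2)$: if $A=I(P,\KK)$, take $M$ to be the defining representation $V$; no nonzero element of $I(P,\KK)$ annihilates $V$, so $V$ is faithful, and its dimension (length) vector is $(1,\dots,1)$, so $V$ is a $1$-representation; since $I(P,\KK)$ is acyclic (the case $\lambda=1$ of Lemma~\ref{l.reps}(iv)), Proposition~\ref{p.6.1} turns this into distributivity, giving $(2)$. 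For $(2)\Rightarrow(3)$: if $A$ is acyclic then $[P(S):S]=1$ for every simple $S$, so by Proposition~\ref{p.6.1} a faithful distributive $M$ has $[M:S]=1$ for all $S$, i.e.\ $M$ is faithful and thin (equivalently, a faithful $1$-representation), which is $(3)$.

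The substance is $(3)\Rightarrow(1)$. Let $M$ be faithful and thin. Thinness and Proposition~\ref{p.6.1}(i) make $M$ a faithful $1$-representation, and since no subquotient of $M$ can be $T\oplus T$, Camillo's criterion makes $M$ distributive. Fix a complete set $\mathcal P$ of primitive orthogonal idempotents; as $A$ is basic pointed, $\dim eM=[M:S_e]=1$ for each $e\in\mathcal P$. Because $M$ is faithful, an element $a=af$ of $eAf$ that acts as $0$ on $M$ acts as $0$ on $fM$, hence is $0$; thus $eAf\hookrightarrow\Hom_\KK(fM,eM)$, so $\dim eAf\le 1$, i.e.\ $\dim\Hom_A(Ae,Af)\le 1$. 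Moreover a nonzero $x\in eAf$ induces an isomorphism $fM\cong eM$ of one-dimensional spaces, so if $x\in eAf$ and $y\in fAg$ are both nonzero, then $xy\in eAg$ induces the composite isomorphism $gM\cong eM$ and is in particular nonzero; under the identification $\Hom_A(Ae,Af)\cong eAf$ this says the composition $\Hom_A(Ae,Af)\times\Hom_A(Af,Ag)\to\Hom_A(Ae,Ag)$ is nonzero whenever its two factors are nonzero. Hence $A$ satisfies condition~(4) of Theorem~\ref{t.definc}, and therefore $A\cong I_\lambda(P,\KK)$ for a finite poset $P$ and a $2$-cocycle $\lambda$.

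To finish, transport $M$ across this isomorphism: $M$ is then a faithful distributive $1$-representation of $I_\lambda(P,\KK)$, and writing $f_{xy}\cdot m_y=\alpha_{xy}m_x$, where $m_x$ spans the one-dimensional space $f_{xx}M$, the module axiom $(f_{xy}*_\lambda f_{yz})m_z=f_{xy}(f_{yz}m_z)$ gives $\lambda_{xz}^y\alpha_{xz}=\alpha_{xy}\alpha_{yz}$, i.e.\ $\lambda=\delta^2(\alpha)$ is a coboundary; by the proposition on coboundaries, $I_\lambda(P,\KK)\cong I(P,\KK)$, so $A$ is the incidence algebra of $P$. This last step is exactly the earlier (unlabelled) theorem on faithful distributive representations of deformations, which I would simply cite. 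I do not expect a genuine obstacle: the only delicate points are that $\mathcal P\cong P$ is a genuine poset rather than merely a quasi-ordered set (antisymmetry follows since $eAf\neq 0\neq fAe$ would force $Ae\cong Af$ via mutually inverse right multiplications and hence $e=f$ by basicness — this is already built into Theorem~\ref{t.definc}), and the left/right bookkeeping in the identification $\Hom_A(Ae,Af)\cong eAf$.
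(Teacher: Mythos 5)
Your proposal is correct, and its overall strategy is the paper's: funnel implication $(3)\Rightarrow(1)$ through Theorem \ref{t.definc} to get $A\cong I_\lambda(P,\KK)$, then use the faithful distributive module to force $\lambda=\delta^2(\alpha)$, so the deformation is trivial and $A\cong I(P,\KK)$. The difference is in how you verify the hypotheses of Theorem \ref{t.definc}: you check condition (4) directly, by pure linear algebra on the spaces $eAf$ acting between the one-dimensional spaces $fM$ and $eM$ (faithfulness gives $eAf\hookrightarrow\Hom_\KK(fM,eM)$, hence $\dim eAf\le 1$, and nonzero elements act as isomorphisms, so compositions of nonzero elements stay nonzero), whereas the paper checks condition (2): it uses the embedding $A\hookrightarrow M^n$ to get $Ae\cong Am_e\subseteq M$, hence each $Ae$ is distributive (semidistributivity), and shows every nonzero morphism $Ae\to M$, and hence every nonzero $Ae\to Af$, is injective, giving local heredity. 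Your route is slightly more elementary and bypasses local heredity altogether; the paper's route has the side benefit of exhibiting local heredity of $A$ en route, which is one of the structural properties the paper emphasizes. A further point in your favor is that you make explicit the last step, which the paper compresses into ``the result follows from Theorem \ref{t.definc}(2)'': namely the appeal to the earlier (unlabelled) theorem that a deformation with a faithful distributive representation is trivial, where faithfulness is what guarantees the scalars $\alpha_{xy}$ are in $\KK^*$ so that $\lambda=\delta^2(\alpha)$ is a genuine coboundary; your parenthetical on antisymmetry of the order on the idempotents (ruling out a mere quasi-order) is likewise a correct gloss on a point Theorem \ref{t.definc} handles implicitly. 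The one cosmetic slip is the sentence ``an element $a=af$ of $eAf$ that acts as $0$ on $M$ acts as $0$ on $fM$'': the implication you need (and clearly intend) runs the other way, $a\cdot fM=0\Rightarrow aM=afM=0\Rightarrow a=0$ by faithfulness.
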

\begin{proof}
(1)$\Rightarrow$(2), (3) follows just by using the defining representation of the incidence algebra. \\
(2)$\Rightarrow$(3) Note that ``faithful thin representation" and ``faithful 1-representation" are obviously equivalent, and equivalent to ``faithful distributive representations" by Proposition \ref{p.6.1} in the acyclic case. \\
(3)$\Rightarrow$(1) As noted above, by the characterization of distributivity, such a faithful 1-representation $M$ will be distributive. Let $\varphi:A\hookrightarrow M^n$ be an injective morphism. Then for every idempotent $e$, we have $\dim(eM)=\dim(\Hom(Ae,M))=[M:S_e]=1$ - the multiplicity in $M$ of the simple $S_e$ corresponding to $e$. Let $eM=\KK m_e$, and $\varphi(e)=(\lambda_1 m_e,\dots,\lambda_n m_e)\in M^n$, $\lambda_i\in \KK$ not all $0$. Then ${\rm ann}(e)=\bigcap\limits_i{\rm ann}(\lambda_i m_e)={\rm ann}(m_e)$, and so $Ae\cong Am_e$. But $Am_e$ is distributive as a submodule of $M$, and therefore so is $Ae$. Moreover, we see that any non-zero morphism $\theta:Ae\rightarrow M$ from some $Ae$ to $M$ is injective, since $\theta(e)=e\theta(e)\in eM$ so $\theta(e)=\lambda m_e$ for some $\lambda$. Therefore, if $\psi:Ae\rightarrow Af$ is a nonzero morphism, composing it to $Af\hookrightarrow M$ yields a non-zero - and thus injective - morphism. Hence, $\psi$ must be itself injective. This shows that $A$ is locally hereditary (and so also acyclic). The result follows from the characterization of deformations of incidence algebras Theorem \ref{t.definc} (2).
\end{proof}

The equivalence between (1) and (3) answers a question that was asked to us by Marcelo Aguiar (Maguiar@math.cornell.edu). We note that the above conditions are representation-theoretic (categorical) in nature, and hence, if we drop the condition that $A$ is basic, we obtain characterizations of structural matrix algebras, or equivalently, of incidence algebras of quasi-ordered sets.

\begin{corollary}
Let $A$ be an algebra which is Schur, i.e. $\End(S)=\KK$ for every simple $A$-module (in particular, this always holds when $\KK$ is algebraically closed). Then $A$ is a structural matrix algebra or equivalently, an incidence algebra of a quasi-ordered set if and only if either of the conditions (2) or (3) of Theorem \ref{t.inc} holds.
\end{corollary}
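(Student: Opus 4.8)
The plan is to transport Theorem \ref{t.inc} from the basic pointed case to the Schur case by Morita equivalence, using the dictionary recalled earlier that a structural matrix algebra is precisely an incidence algebra of a quasi-ordered set $Q$, and that $I(Q,\KK)$ is Morita equivalent to the basic incidence algebra $I(Q/{\sim},\KK)$.

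First I would record that conditions (2) and (3) of Theorem \ref{t.inc} are Morita invariant. Since $A$ is Schur, each block of $A/J(A)$ is $\End_\KK(S)\cong M_{\dim S}(\KK)$, so $A/J(A)$ is a product of full matrix algebras over $\KK$ and $A$ is Morita equivalent to a basic algebra $A_0$, unique up to isomorphism, which is again Schur, hence basic pointed. Acyclicity is a property of the Ext quiver, hence Morita invariant; distributivity of a module is a property of its lattice of submodules, hence categorical; being thin or a $1$-representation only constrains composition multiplicities, which correspond under any Morita equivalence; and faithfulness of $M$ is equivalent to the smallest Serre subcategory of $A\textrm{-mod}$ containing $M$ being all of $A\textrm{-mod}$ (using the bijection between two-sided ideals and Serre subcategories recalled before Theorem \ref{t.twosided}), hence also categorical. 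Therefore $A$ satisfies (2) (resp.\ (3)) if and only if $A_0$ does, and by Theorem \ref{t.inc} this happens exactly when $A_0\cong I(P,\KK)$ for some finite poset $P$.

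It then remains to identify ``$A_0$ is an incidence algebra of a poset'' with ``$A$ is a structural matrix algebra''. If $A\cong I(Q,\KK)$ for a quasi-ordered set $Q$, then as recalled earlier its basic algebra is $I(Q/{\sim},\KK)$, an incidence algebra of a poset, so the conditions hold. Conversely, assume $A_0\cong I(P,\KK)$ and let $n_i$ ($i\in P$) be the multiplicity of the indecomposable projective $P(S_i)$ in $\,{}_AA$. Let $Q$ be the quasi-ordered set obtained from $P$ by replacing each $i$ with $n_i$ mutually equivalent copies and declaring $i'\preceq j'\iff i\le j$ in $P$; then $Q/{\sim}=P$, so $I(Q,\KK)$ is a structural matrix algebra whose basic algebra is $I(P,\KK)\cong A_0$ and in which each indecomposable projective occurs with the same multiplicity $n_i$ as in $A$. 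Since a Schur algebra is recovered from its basic algebra together with its multiplicity vector as $\End_{A_0}\bigl(\bigoplus_i P_0(S_i)^{n_i}\bigr)$, we conclude $A\cong I(Q,\KK)$, a structural matrix algebra.

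The step I expect to need the most care is this last identification: checking that a Schur algebra is determined up to isomorphism by the pair (basic algebra, multiplicity vector), and that the abstract reconstruction $\End_{A_0}(\bigoplus_i P_0(S_i)^{n_i})$ coincides literally with the concrete structural matrix algebra $I(Q,\KK)$ attached to the blown-up quasi-order $Q$. Everything else is a routine transfer of Theorem \ref{t.inc} across the Morita equivalence.
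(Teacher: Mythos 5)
Your proposal is correct and takes essentially the same route as the paper, which proves the corollary simply by observing that conditions (2) and (3) of Theorem \ref{t.inc} are categorical/Morita-invariant and invoking the dictionary that structural matrix algebras (incidence algebras of quasi-ordered sets) are exactly the Schur algebras whose basic algebra is an incidence algebra of a poset; your write-up merely makes the reduction and the reconstruction $A\cong\End_{A_0}\bigl(\bigoplus_i P_0(S_i)^{n_i}\bigr)$ explicit. One small caution: faithfulness is best transported via the annihilator under the ideal-lattice isomorphism induced by the Morita equivalence (i.e.\ subcategories closed under subquotients and direct sums, which is what the paper's ``Serre subcategory'' bijection with ideals really refers to), since the genuine Serre subcategory generated by $M$ (closed under extensions) can be all of $A\textrm{-mod}$ without $M$ being faithful.
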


\subsection{Application: further characterizations}\label{s.appl}

We note how our results also recover the results of R. Feinberg \cite{F1,F2}. 

We will need the following representation theoretic lemma which may be well known, but we could not find a reference. It may be useful outside the scope of this paper, as it gives a criteria for a projective to be a Schur module. We will need the following well known remark. If $I$ is an ideal of the algebra $A$, let $B=A/I$ and consider the functors $Res:B{\rm-Mod}\rightarrow A{\rm-Mod}$ the restriction of scalars via $A\rightarrow A/I=B$, and $F:A{\rm-mod}\rightarrow B{\rm-Mod}$ be $F(M)=M/IM\cong A/I\otimes_A M$ and correspondingly defined on morphisms. Then $(F,Res)$ is an adjoint pair, and since $Res$ is exact, $F$ preserves projectives; thus, $P_0=P/IP=F(P)$ is projective over $B$.


\begin{lemma}
Let $A$ be a basic Shurian algebra, $P=P(S)$ be a projective indecomposable $A$-module with top $S$. If the center of the algebra $A/\ann(P')$ is semisimple for any quotient $P'$ of $P$, then $[P(S):S]=1$, equivalently, $\End(P)=\KK$ (i.e. $P$ is a Schur module). 
\end{lemma}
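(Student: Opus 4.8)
The plan is to prove the contrapositive: assuming $[P(S):S] \geq 2$, I would construct a quotient $P'$ of $P$ whose algebra $A/\ann(P')$ has non-semisimple center, contradicting the hypothesis. Since $[P(S):S] \geq 2$ and $S$ is the top of $P$, the simple $S$ occurs again somewhere in $JP = J(A)P$; choose a submodule $N \subseteq JP$ maximal with respect to the property that $[P/N : S] = 2$, so that $P' := P/N$ has $S$ in its top (with multiplicity one) and exactly one more copy of $S$ sitting, by maximality of $N$, in the socle of $P'$. The key local picture is then that $P'$ is a local module with top $S$, and $P'$ contains a copy of $S$ at the bottom; I want to show this forces an element of $e A' e$ (where $e$ is the primitive idempotent with $Ae \twoheadrightarrow P'$, and $A' = A/\ann(P')$) that is nilpotent but nonzero, hence a non-semisimple center — or more precisely, that it forces $\dim_\KK(eA'e) \geq 2$ and then that the extra dimension is a nilpotent central element.

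The heart of the argument is the following. Let $e$ be primitive with $P(S) = Ae$ and write $\bar e$ for its image in $A' = A/\ann(P')$. Since $[P':S] = 2$ and $A$ (hence $A'$) is basic Schurian, $\dim_\KK \Hom_{A'}(A'\bar e, P') = [P':S] = 2$; but $P' = A'\bar e$ itself (as $P'$ is a quotient of $Ae$, and one checks $\ann(P')\cdot(Ae) $ is exactly the kernel), so $\dim_\KK \End_{A'}(A'\bar e) = \dim_\KK(\bar e A' \bar e) = 2$. Thus $\bar e A' \bar e \cong \KK[t]/(t^2)$ (the only 2-dimensional unital $\KK$-algebra that is local — and it is local because $A'\bar e = P'$ is local, so $\End$ is local). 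Now I claim the nilpotent $t \in \bar e A' \bar e$ is central in $A'$: this is where the faithfulness of $P'$ over $A'$ enters. The element $t$ acts on $P'$ by an $A'$-endomorphism $\tau$ with $\tau^2 = 0$ and $\im \tau = \soc$-copy of $S$; one shows, using that every primitive idempotent $\bar f$ of $A'$ also has $\bar f A' \bar f$ local (Schurian) and that $P'$ is faithful, that $t$ commutes with all $\bar f A' \bar g$ — essentially because $t$ is detected by its action on the faithful module $P'$, where it is visibly central (it is multiplication by a scalar nilpotent in the sense of the endomorphism ring, which commutes with the whole $A'$-action by definition of $\Hom_{A'}$). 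Hence $\KK[t] \subseteq Z(A')$ gives a non-semisimple center, the desired contradiction.

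The main obstacle I anticipate is the faithfulness/centrality step: making precise that the nilpotent endomorphism $\tau$ of $P'$ yields a genuine central element of $A'$, as opposed to just an element of one corner $\bar e A' \bar e$. The subtlety is that $\End_{A'}(P') = \bar e A' \bar e$ acts on $P'$ on the side opposite to $A'$, so $\tau$ automatically commutes with the $A'$-action \emph{on $P'$}; since $P'$ is a faithful $A'$-module, an element of $A'$ is determined by its action on $P'$, but $\tau$ itself is not a priori given by left multiplication by an element of $A'$. I would resolve this by noting that $\bar e \tau \bar e = \tau$ (as $\tau \in \bar e A' \bar e$) together with the identity $a \tau = \tau a$ for all $a \in A'$ \emph{as operators on $P'$}, and then reading $\tau$ back as the element $t = \tau(\bar e) \in \bar e A' \bar e \subseteq A'$: for this $t$ one computes $a t = a\tau(\bar e) = \tau(a\bar e)$ and $ta = \tau(\bar e)a$, and faithfulness of $P' = A'\bar e$ plus a diagram chase comparing both sides on generators forces $at = ta$ in $A'$. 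An alternative cleaner route, which I would pursue if the direct computation gets unwieldy, is to invoke the hypothesis on \emph{all} quotients $P'$ of $P$: if $[P(S):S] \geq 2$ one can descend to the minimal such quotient $P'$ where the two copies of $S$ are as close as possible, making $P'$ have Loewy length $2$ with $P'/JP' \cong S$ and $JP' \cong S$, so $A/\ann(P') \cong \KK[t]/(t^2)$ outright as a \emph{commutative} algebra on the single vertex, whose center is obviously non-semisimple.
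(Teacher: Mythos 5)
Your overall strategy (pass to a quotient $P'$ of $P$ in which the second copy of $S$ has been pushed into the socle, then exhibit a nilpotent central element of $A'=A/\ann(P')$ sitting in the corner $\bar e A'\bar e$) is the same as the paper's, and your construction of $P'$ is fine: maximality of $N$ does force ${\rm Soc}(P')\cong S$, so $P'$ is exactly the ``diamond'' the paper works with. But the heart of the proof — centrality of the nilpotent — is where your argument has a genuine gap. The reasoning ``$\tau\in\End_{A'}(P')$ commutes with the $A'$-action, $P'$ is faithful, hence $t=\tau(\bar e)$ is central'' does not work: an endomorphism of a faithful module is not given by multiplication by an algebra element, and your displayed identities give $at=\tau(a\bar e)$ and $ta=\tau(\bar e)a$, which there is no reason to equate — that equality is precisely what has to be proved. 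A concrete test case showing the reasoning pattern fails: let $A'=\KK Q/(x^2)$ where $Q$ has a loop $x$ at vertex $1$ and one arrow $\alpha:1\to 2$. The projective $P'=A'e_1$ is a faithful local module with $[P':S_1]=2$ and $e_1A'e_1\cong\KK[x]/(x^2)$, and right multiplication by $x$ is an $A'$-endomorphism of $P'$ squaring to zero; yet $x$ is not central ($\alpha x\neq 0=x\alpha$) and in fact $Z(A')=\KK$. This does not contradict the lemma, because that $P'$ is not a diamond (the second copy of $S_1$ is not in the socle) — but it shows that faithfulness plus ``$\tau$ commutes with the action'' cannot yield centrality; the simple-socle structure of your $P'$ must be used, and your argument never uses it at this step. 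Your fallback route is also false: you cannot in general collapse further so that the two copies of $S$ become adjacent — e.g.\ a uniserial diamond with layers $S_1,S_2,S_1$ (over the quiver $1\rightleftarrows 2$ with the obvious relations) has no length-two quotient with both factors $S_1$, since $\Ext^1(S_1,S_1)=0$ there; other simples can be forced between the two copies of $S$, which is exactly why the paper needs a more elaborate argument. A smaller issue: the identification $P'\cong A'\bar e$ (``one checks $\ann(P')\cdot Ae$ is exactly the kernel'') is not automatic and is not checked; it amounts to the kernel being stable under right multiplication by $eAe$. The consequence you actually need, $\dim\bar eA'\bar e=2$, is true, but by a different argument: $\bar eA'\bar e$ is local and embeds into $\End_\KK(\bar eP')\cong M_2(\KK)$ because $P'$ is faithful, and a local subalgebra of $M_2(\KK)$ has dimension at most $2$.

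For comparison, the paper's proof supplies exactly the missing ingredient. Faithfulness of the diamond $D$ over $B=A/\ann(D)$ gives an embedding $B\hookrightarrow D^k$; since $D$ is local with top $S$, every projective indecomposable $P_i$ of $B$ other than $P_0=B\bar e$ lands inside $(JD)^k$, hence has Loewy length strictly smaller than $l={\rm lw}(D)={\rm lw}(P_0)$. A nonzero element $x\in J^lP_0$ then lies in ${\rm Soc}(P_0)$, spans a copy of $S$ (so $x\in \bar eB\bar e$), and satisfies $xM=Mx=0$ for $M=JP_0\oplus\bigoplus_{i\geq 1}P_i$; writing $B=\KK \bar e\oplus M$ one sees $x$ is central and nilpotent, contradicting semisimplicity of $Z(B)$. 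That Loewy-length comparison (or some substitute) is what your proposal is missing; without it the centrality claim is unsupported.
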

\begin{proof}
Assume, by contradiction, that $[P(S):S]\geq 2$, so $JP$ has $S$ as a subquotient. Then there exists a diamond $D$ - a module with simple top and simple socle - such as both the top and the socle (of $D$) are isomorphic to $S$ (for example, if $X\subseteq P$ is a submodule such that $S\hookrightarrow P/X$, and if $X$ is taken maximal with this property, then it is easy to see that $S=Soc(P/X)$). Let $I=\ann(D)$; there is a projection $P\rightarrow D$, which induces a surjection $P_0=P/IP\rightarrow D$. Let $B=A/I$, $J=Jac(B)$ and $B=P_0\oplus\bigoplus\limits_{i=1}^n P_i$ ($P_0$ is projective indecomposable over $B$), with $P_i=Be_i$ and $e_i$ primitive idempotents (note that $B$ is basic pointed too). \\
Let $lw(X)$ denote the Loewy length of a $B$-module $X$, and let $l=lw(D)$ (this coincides with the Loewy length as an $A$-module). 
Since $D$ is $B$-faithful, there is an embedding $\phi:B\hookrightarrow D^k$. Let $\phi(e_i)=(m_{ij})_{j=1,\dots,k}$ and let $\pi_j$ denote the canonical projections of $D^k$. For $i\geq 1$, note that $\pi_j\phi(e_i)\subseteq JD$ since otherwise, since $D$ is local, we'd get $\pi_j\phi(P_i)=\pi_j\phi(Ae_i)=D$, and so $D$ would be a quotient of $P_i$. In this case, the simple $S={top}(D)$ would be isomorphic to the top of $P_i$, which is not possible. Hence, $P_i\cong \phi(P_i)\subseteq (JD)^k$, which shows that $lw(P_i)\leq l-1$. Also, since $P_0\hookrightarrow D^k$, $lw(P_0)\leq l$, and in fact $lw(P_0)=l$ because of the projection $P_0\rightarrow D$. Moreover, the embedding $P_0\hookrightarrow D^k$ shows that the socle of $P_0$ is $S^t$ for some $t$. Since $lw(P_0)=l$, $J^lP_0\neq 0$ and $J^{l+1}P_0=0$; let $x\in J^lP_0$. Then $x\in Soc(P_0)$ since $Jx=0$, and so $\KK x\cong S$. \\
Now, let $M=JP_0\oplus\bigoplus\limits_{i=1}^kP_i$. Since $lw(JP_0),lw(P_i)< l$ (since $P_0$ is local $lw(JP_0)<lw(P_0)$), we get $lw(M)<l$ (in fact, $lw(M)=l-1$) and therefore $J^lM=0$. Thus, $xM=0$ since $x\in J^l$. At the same time, if $b\in M$, write $b=be_0+\sum\limits_{i=1}^nbe_i\in JP_0\oplus\bigoplus\limits_{i=1}^nP_i$ (component-wise decomposition of $b$) and since $e_ix=0$ for $i\geq 1$ ($\KK x\cong S\cong Top(P_0)\not\cong Top(P_i)$ for $i\geq 1$), we get $bx=be_0x$ (alternatively, use $x=e_0x$). But $be_0\in JP_0\subset J$, and since $\KK x$ is simple, $be_0x=0$; hence $bx=0$. \\
To summarize, we have $xM=Mx=0$, and also, $e_0x=x=xe_0$ (since $x\in P_0=Ae_0$ and $\KK x\cong S$). Now we see that $x\in Z(B)$: write $B=\KK e_0\oplus M$ as vector spaces; if $b\in B$, write $b=\lambda e_0+m$ ($m\in M$) so $bx=\lambda e_0x+mx=\lambda x$ and $xb=\lambda xe_0+xm=\lambda x$ (in fact, $J^n\subset Z(B)$). But this means $Z(B)$ is not semisimple, since it is commutative and contains nilpotents: $x^2=0$. This is a contradiction and the proof is finished.  
\end{proof}

We can recover the main result of \cite{F2}; the results there are formulated for the natural extension to infinite dimensional topological pseudocompact algebras - i.e. algebras which are dual to coalgebras; \cite{DNR}. We have only been interested here in the finite dimensional case.

\begin{corollary}[Feinberg]
Let $A$ be a finite dimensional algebra. Then $A$ is an incidence algebra of a quasi-ordered set if and only if the following three conditions hold:\\
(i) $A$ has a faithful distributive representation, \\
(ii) the blocks of $A/J$ are matrix algebras over $\KK$ and \\
(iii) the center of any quotient $A/I$ is a product of $\KK$'s. 
\end{corollary}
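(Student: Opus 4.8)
The plan is to derive this from Theorem \ref{t.inc} by checking that conditions (i)--(iii) of the corollary are equivalent to condition (2) or (3) there, modulo the passage between the basic pointed case and the Schur (structural matrix) case via Morita equivalence. First I would observe that (i)--(iii) are all Morita invariant, so without loss of generality one may assume $A$ is basic; then, since $\KK$ is infinite and $A$ is Schurian, $A$ being basic pointed is automatic (the blocks of $A/J$ being matrix algebras over $\KK$ becomes the statement that all simples are $1$-dimensional). For the ``only if'' direction: if $A=I(P,\KK)$ (or its Morita model, a structural matrix algebra), then the defining representation is a faithful thin, hence faithful distributive, representation, giving (i); (ii) holds because $A/J$ is a product of copies of $\KK$; and (iii) holds because a quotient $A/I$ of an incidence algebra is again (Morita equivalent to) an incidence algebra of a quasi-ordered set (the Serre subcategory picture, or: ideals of $I(P,\KK)$ correspond to coradical-type data), whose center is a product of $\KK$'s—this last point is the classical fact that the center of an incidence algebra of a connected poset is $\KK$, applied blockwise, and it is exactly what condition (iii) abstracts.

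For the ``if'' direction, which is the substance, I would argue as follows. By (ii) every simple $A$-module $S$ has $\End(S)=\KK$ and, after reducing to the basic case, is $1$-dimensional; so $A$ is basic pointed. Condition (i) gives a faithful distributive representation $M$. To invoke Theorem \ref{t.inc}(2) I must show $A$ is acyclic; equivalently, by the locally-hereditary discussion, it suffices to show each projective indecomposable $P=P(S)$ satisfies $[P:S]=1$, i.e. $\End(P)=\KK$, i.e. $P$ is a Schur module. This is precisely where the preceding Lemma enters: its hypothesis is that the center of $A/\ann(P')$ is semisimple for every quotient $P'$ of $P$, and condition (iii) delivers exactly this—for any quotient $P'$ of $P$, $\ann(P')$ is an ideal $I$ of $A$, and (iii) says $Z(A/I)$ is a product of $\KK$'s, in particular semisimple. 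Hence the Lemma applies and gives $[P(S):S]=1$ for all $S$, so $A$ is acyclic. Now Theorem \ref{t.inc}, equivalence (1)$\Leftrightarrow$(2), applies directly: $A$ is acyclic and has a faithful distributive representation, so $A$ is an incidence algebra of a poset; dropping the basic hypothesis and using the Corollary after Theorem \ref{t.inc} (the Schur, non-basic version), $A$ is a structural matrix algebra, equivalently an incidence algebra of a quasi-ordered set.

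The one place requiring care—and the main obstacle—is the bookkeeping around Morita equivalence: one must be sure that all three of Feinberg's conditions transfer correctly between $A$ and its basic model $A_0$. Conditions (i) (existence of a faithful distributive module) and (iii) (centers of quotients) are manifestly Morita invariant since they are phrased in terms of the module category and its Serre quotients, and the center and the distributivity lattice are categorical; condition (ii) is the condition that $A$ is Schur, which is also Morita invariant and is what guarantees the basic model is pointed. I would spend a sentence or two making this explicit, then note that the converse direction's use of Theorem \ref{t.inc} and the ``if'' direction's use of the Lemma fit together with no further input. An alternative, slightly more self-contained route—if one wants to avoid citing the non-basic Corollary—is to prove everything for basic $A$, concluding $A\cong I(P,\KK)$, and then separately remark that the general Schur case follows because a Schur algebra is Morita equivalent to its basic pointed model and incidence algebras of quasi-ordered sets are exactly the structural algebras Morita equivalent to incidence algebras of posets, as recalled in the Notations subsection; I would present the argument in the first form and append this remark.
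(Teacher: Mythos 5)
Your overall architecture matches the paper's: reduce to the basic pointed case via Morita equivalence using (ii), use the Lemma on Schur projectives with hypothesis supplied by (iii) to get $[P(S):S]=1$ for all simples, and then conclude via Theorem \ref{t.inc}. However, there is a genuine flaw in the bridging step of your ``if'' direction. You assert that showing $[P(S):S]=1$ for every simple $S$ is \emph{equivalent} to showing $A$ is acyclic, and on that basis you invoke Theorem \ref{t.inc}(2). Only one implication is true: acyclicity forces $[P(S):S]=1$, but not conversely. For instance, the algebra on the quiver with two vertices and arrows $\alpha:1\to 2$, $\beta:2\to 1$ modulo all paths of length two has $[P(S_i):S_i]=1$ for both simples while its Ext quiver has an oriented cycle. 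So at the point where you invoke Theorem \ref{t.inc}(2) you have not established its hypothesis, and you cannot get acyclicity first without, in effect, already knowing $A$ is an incidence algebra. The paper avoids this by a slightly different routing: Proposition \ref{p.6.1}(ii) is stated under the weaker hypothesis ``$[P(S):S]=1$ for all simples'' (not acyclicity), so the faithful distributive module $M$ is seen to be a faithful thin (1-)representation, and then Theorem \ref{t.inc}(3) applies; acyclicity and local heredity are \emph{outputs} of the proof of (3)$\Rightarrow$(1) there, not inputs. Your argument is repaired by exactly this substitution, but as written the step is incorrect.

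A secondary point, in your ``only if'' direction: you justify condition (iii) by claiming that any quotient $A/I$ of an incidence algebra is again (Morita equivalent to) an incidence algebra of a quasi-ordered set. That is false: for the chain $x<y<z$, the span of $f_{xz}$ is an ideal, and the quotient has $\overline{f_{xy}}\,\overline{f_{yz}}=0$ with both factors nonzero, violating the composition condition (4) of Theorem \ref{t.definc}, so it is not an incidence algebra of any poset or quasi-ordered set. The fact you actually need---that the center of every quotient of an incidence algebra is a product of copies of $\KK$---is true, but it requires its own (easy, direct) verification or a citation, which is how the paper handles it; it does not follow from the quotient being an incidence algebra.
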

\begin{proof}
The block condition (ii) is obviously equivalent to $A$ being Schur (i.e. $\End(S)=\KK$ for every simple module $S$); up to Morita equivalence, it is enough to assume $A$ is basic, and hence pointed. By the previous Lemma, condition (iii) shows that $[P(S):S]=1$. Now, since $A$ has a faithful distributive representation $M$, by Proposition \ref{p.6.1}, we see that $M$ is a 1-representation (multiplicity free). Hence, the direct statement follows by the structure Theorem \ref{t.inc}. Conversely, the center condition is a known (and computationally easily verified) property of incidence algebras \cite{DS}.
\end{proof}

One may wonder whether a condition regarding the existence of an indecomposable distributive 1-representation could perhaps characterize incidence algebras. This is motivated by the observation that when the poset $P$ is connected (i.e. its Hasse quiver is so), then it is not hard to see that the defining representation of $I(P,\KK)$ is indecomposable. Nevertheless, note that the acyclic quiver $\xymatrix{\bullet \ar@/^/[r] \ar@/_/[r] & \bullet}$ has infinitely many indecomposable 2-dimensional representations which are 1-representations, but its quiver algebra is not an incidence algebra. It also has indecomposable faithful representations (for example, the non-simple projective indecomposable module). 

\subsection*{Classification of distributive/thin representations}

By the previous results, over an acyclic algebra, a representation is distributive exactly when it is thin. In this subsection we give the classification of thin modules (equivalently, distributive modules) over incidence algebras. Later, this will be used to give complete invariants for thin modules over arbitrary algebras, and to classify their thin representations. 

Let $A=I(P,\KK)$ be an incidence algebra. If $M$ is a distributive representation (not necessarily faithful), let $I=\ann(M)$. The algebra $A$ has only finitely many ideals, the structure of which is well understood; $I$ has a basis consisting of elements among the $f_{xy}$'s, and let $B$ be this basis of $I$, and $S$ be its complement in the set $\{f_{xy}| x\leq y\}$. The ideal condition translates to the property that $S$ is closed under subintervals: if $f_{xy}\in S$ and $[a,b]\subset [x,y]$ (meaning that if $x\leq a\leq b\leq y$, then $f_{ab}\in S$), then $f_{ab}\in S$. Indeed, if $f_{ab}\not\in S$, then $f_{ab}\in I$ so $f_{xy}=f_{xa}f_{ab}f_{by}\in I$ too. 

Note also that if $f_{xy}, f_{yz}\in S$, then $f_{xz}\in S$. To see this, let $B=A/I$; this is (isomorphic to) an incidence algebra, since it possesses the faithful distributive representation $M$, and the cosets $E=\{\overline{f_{xx}}| f_{xx}\in S\}\subset B$ form a set $E$ of primitive orthogonal idempotents (and hence, they are easily seen to be linearly independent; a similar more general argument is used, for example, in \cite{DIN}). In fact, the idempotents in $\{\overline{f_{xx}}|f_{xx}\not\in I\}$ are also primitive idempotents, since the spaces $\overline{f_{xx}}B\overline{f_{xx}}$ are at most 1-dimensional (because $\dim(f_{xx}Af_{xx})= 1$).

As noted before, in an incidence algebra $B$, for any three primitive idempotents $e,f,g$, if $0\neq s_{ef}\in eBf$ and $0\neq s_{fg}\in fBg\neq 0$, then $0\neq s_{ef}\cdot s_{fg}\in eBf\cdot fBg=eBg$ (this is easily extended for non-primitive idempotents too). Hence, if $f_{xy},f_{yz}\in S$ then $\overline{0}\neq \overline{f_{xy}}\in \overline{f_{xx}}B\overline{f_{yy}}$ and $\overline{0}\neq \overline{f_{yz}}\in \overline{f_{yy}}B\overline{f_{zz}}$, and so $\overline{f_{xz}}=\overline{f_{xy}}\cdot \overline{f_{yz}}\neq \overline{0}$, i.e. $f_{xz}\in S$. 


The last part shows that if $f_{xx},f_{yy}\in S$, we may define $x\leq_S y$ if and only if $f_{xy}\in S$, and obtain that $(S,\leq_S)$ is a subposet of $(P,\leq)$ (note that if $f_{xx},f_{yy}\in S$ it does not necessarily follow that $f_{xy}\in S$; for example, one may simply take $I$ to be the Jacobson radical, which will make all relations in $P$ disappear in $S$, i.e. all vertices will become incomparable). 

\begin{definition}
The poset $(S,\leq_S)$ uniquely associated to $M$ as above will be called the support of $M$, and we will write $S={\rm Supp}(M)$.
\end{definition}

Moreover, the comments above show that $(S,\leq_S)$ is a subposet of $(P,\leq)$, which has the property that it is closed under subintervals; that is, if $x,y\in S$ and $x\leq_S y$, then for any $z\in P$ with $x\leq z\leq y$, we have $z\in S$ and $x\leq_S z\leq_S y$. This motivates the introduction of the following

\begin{definition}\label{d.closed}
A subposet $(S,\leq_S)$ of $(P,\leq)$ (meaning simply that $S\subseteq P$  and if $x\leq_S y$ then $x\leq y$) is said to be a closed subposet if $S$ is closed under subintervals in the above sense.
\end{definition}

Combinatorially a closed subposet is obviously realized in the following way. Consider a set $X=\{[x_i,y_i]|i=1,\dots,t\}$ of intervals of $P$, so $[x_i,y_i]=\{z\in P| x_i\leq z\leq y_i\}$. We perform two changes to this set $X$. First, keep only the intervals that are maximal, i.e. if $[x_i,y_i]\subseteq [x_j,y_j]$ then eliminate $[x_i,y_i]$. Furthermore, eliminate ``overlaps": if $x_i\leq x_j\leq y_i\leq y_j$, then replace the pair $[x_i,y_i], [x_j,y_j]$ by $[x_i,y_j]$. Continue until none of the two changes can be done anymore. Let $S$ be the set of all $z$ in one of the intervals in $X$, with relations given by the relations in these intervals; that is $a\leq_S b$ if $x,y\in [x_i,y_i]$ for some $i$. This is obviously a subposet of $P$ which is closed under subintervals. Conversely, it is easy to see that every such closed subposet arises this way: if $S$ is a closed subposet, consider the set of maximal intervals of $S$. The last ``overlap" e;elimination procedure above is necessary as $S$ must be transitive, so $x\leq_S y$ and $y\leq_S z$ implies that $x$ and $z$ {\it are related} in $S$. 

We saw that a distributive representation $M$ of $A=I(P,\KK)$ becomes a faithful distributive representation of $I((S,\leq_S),\KK)$, and thus is completely determined by some 1-cocycle $\alpha\in H^1(\Delta(S,\leq_S),\KK^*)$. Hence, we have

\begin{theorem}\label{t.discls}
The distributive representations of an incidence algebra $A=I((P,\leq),\KK)$, equivalently, the thin representations, are in 1-1 correspondence with the set
$$\bigsqcup\limits_{(S,\leq_S) {\rm\,closed}}H^1(\Delta(S,\leq_S),\KK^*)$$
where the disjoint union is realized over the closed sub-posets of $(P,\leq)$. 
\end{theorem}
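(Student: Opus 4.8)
The plan is to produce two explicit, mutually inverse maps between isomorphism classes of distributive (equivalently, thin) $A$-representations and the set $\bigsqcup\limits_{(S,\leq_S){\rm\,closed}}H^1(\Delta(S,\leq_S),\KK^*)$, reusing the structural material already assembled above the statement. For the forward map, given a distributive $A$-representation $M$ I set $I=\ann(M)$ and take $(S,\leq_S)={\rm Supp}(M)$; by the discussion preceding the theorem this is a closed subposet of $(P,\leq)$, one has $A/I\cong I((S,\leq_S),\KK)$, and $M$ is a \emph{faithful} distributive representation of this incidence algebra. The classification of faithful distributive representations of an incidence algebra established above then attaches to $M$ a cohomology class $[\alpha_M]\in H^1(\Delta(S,\leq_S),\KK^*)$ which determines $M$ up to isomorphism and is independent of the incidence basis of $A/I$ and of the basis of $M$ used to compute it. Since $\ann(-)$, hence ${\rm Supp}(-)$, is an isomorphism invariant, and since $A$-homomorphisms between modules killed by $I$ coincide with $A/I$-homomorphisms, the assignment $[M]\mapsto\big((S,\leq_S),[\alpha_M]\big)$ is a well-defined injection into the displayed disjoint union.

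For the inverse map, fix a closed subposet $(S,\leq_S)$ of $(P,\leq)$ and a class $[\alpha]\in H^1(\Delta(S,\leq_S),\KK^*)$, and put $J_S={\rm Span}_\KK\{f_{xy} \mid x\leq y \text{ in } P \text{ and } x\not\leq_S y\}$. Using closure of $S$ under subintervals together with transitivity of $\leq_S$, one checks that $J_S$ is a two-sided ideal of $A$ and that $A/J_S\cong I((S,\leq_S),\KK)$ — this is the converse direction already noted in the combinatorial discussion above. Let $M_\alpha$ be the faithful distributive $I((S,\leq_S),\KK)$-representation attached to $[\alpha]$, and view it as an $A$-module by restriction of scalars along $A\twoheadrightarrow A/J_S\cong I((S,\leq_S),\KK)$. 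Then $\ann(M_\alpha)\supseteq J_S$ trivially, and faithfulness of $M_\alpha$ over $A/J_S$ upgrades this to $\ann(M_\alpha)=J_S$; hence ${\rm Supp}(M_\alpha)=(S,\leq_S)$ and, by construction, the class attached to $M_\alpha$ by the forward map is $[\alpha]$. Feeding the forward map into $\big((S,\leq_S),[\alpha]\big)\mapsto[M_\alpha]$ and invoking the uniqueness half of the faithful-distributive classification shows that the two maps are mutually inverse, which is the assertion.

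The only genuinely load-bearing point is the equality $\ann(M_\alpha)=J_S$. It relies on $A/J_S$ being literally the incidence algebra $I((S,\leq_S),\KK)$ — so that the earlier classification applies verbatim — and on the produced representation being honestly faithful, which is exactly what ``$M_\alpha$ corresponds to a $1$-cocycle'' means in that classification. Phrased differently, the substance of the argument is that $(S,\leq_S)\mapsto J_S$ is a bijection from closed subposets of $P$ onto the ideals of the form $\ann(M)$ with $M$ distributive, plus the routine observation that restriction of scalars along the surjection $A\twoheadrightarrow A/J_S$ is fully faithful, so that no non-isomorphic representations get identified; everything else is formal bookkeeping with cohomology classes.
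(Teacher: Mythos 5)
Your proposal is correct and takes essentially the same route as the paper: the paper's proof of Theorem \ref{t.discls} is exactly the discussion preceding the statement (the annihilator of a distributive module is spanned by $f_{xy}$'s, its complement determines the closed subposet $S={\rm Supp}(M)$, $A/\ann(M)\cong I((S,\leq_S),\KK)$, and $M$ is then a faithful distributive module over this quotient, hence classified by $H^1(\Delta(S,\leq_S),\KK^*)$); what you add is only the explicit inverse, namely the ideal $J_S$ and the pullback of $M_\alpha$ along $A\twoheadrightarrow A/J_S$, which the paper leaves implicit, and those verifications (that $J_S$ is an ideal by closure under subintervals and transitivity of $\leq_S$, that $A/J_S\cong I((S,\leq_S),\KK)$, and that $\ann_A(M_\alpha)=J_S$ by faithfulness over the quotient) are all correct. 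One small correction: the class $[\alpha_M]$ is \emph{not} independent of the choice of incidence basis of $A/\ann(M)$ --- rescaling a combinatorial basis by a multiplicative function $\gamma$ (a $1$-cocycle) that is not a coboundary changes the class by the factor $[\gamma]$ --- but your argument does not actually need this overclaim: it suffices that $A/\ann(M)$ carries the canonical basis $\overline{f_{xy}}$, $x\leq_S y$, inherited from the fixed combinatorial basis of $I(P,\KK)$, together with independence from the choice of basis of $M$ (the coboundary ambiguity), which is exactly what the classification of faithful distributive representations provides.
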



\section{Thin representation theory}\label{s.thin}

\subsection*{Generic classification of distributive/thin representations}

We note the following corollary, which states that, in the acyclic case, {\it every} possible distributive representation comes precisely as one realized via the defining representation of an incidence algebra, and so it is combinatorial in nature; also, any thin representation of any finite dimensional algebra can be realized as such a defining representation of an incidence algebra. Hence, our results show that this construction is generic and covers the most general case. More precisely, this is formulated as follows.

\begin{theorem}\label{t.Ann}
Let $A$ be a finite dimensional basic pointed algebra, and assume that either $A$ is acyclic and $M$ is distributive, or that $M$ is a thin module. Then $A/\ann(M)$ is an incidence algebra of some poset $P$; moreover, there are suitable bases $(f_{xy})_{x\leq y, \, x,y\in P}$ of $A/\ann(M)$ and $(m_{x})_{x\in P}$ of $M$, with respect to which $M$ is the defining representation of $A/\ann(M)=I(P,\KK)$, that is, $f_{xy}\cdot m_y=m_x$. Equivalently, after composing to a suitable algebra automorphism $\varphi:A/\ann(M)\rightarrow A/\ann(M)$, the representation ${}_{\varphi}M$ with the induced structure via $\varphi$ can be seen as the defining representation of $A/\ann(M)\cong I(P,\KK)$. 
\end{theorem}
\begin{proof}
Obviously, $M$ is a faithful representation of the algebra $A/\ann(M)$; the hypothesis states that either $A/\ann(M)$ is acyclic and $M$ is distributive, or that $M$ is thin; in both cases, Theorem \ref{t.inc} readily implies that $A/\ann(M)\cong I(P,\KK)$ for some poset $P$. For the last part, fix an ``incidence algebra basis" $(f_{xy})_{x\leq y}$ of $A/\ann(M)$ (so $f_{xy}f_{zt}=\delta_{yz}f_{xt}$) and let $M$ be given by the 1-cocycle $\alpha$; that is, $\alpha$ is ``multiplicative": $\alpha_{xz}=\alpha_{xy}\alpha_{yz}$, $x\leq y\leq z$ and $M$ has a basis $m_x$ with $f_{xy}\cdot m_y=\alpha_{xy}m_x$. Then, letting $e_{xy}=\frac{1}{\alpha_{xy}}f_{xy}$, we see that $e_{xy}\cdot e_{yz}=e_{xz}$ and $e_{xy}\cdot m_y=m_x$. Thus, after this change of basis in $A/\ann(M)$, $M$ is viewed (realized) as the defining representation of $I(P,\KK)$ (with poset basis $e_{xy}$, $x\leq y$).
\end{proof}

The previous theorem implies that distributive representations of acyclic algebras, as well as thin representations of arbitrary algebras $A$, can be classified in terms of their annihilator $I$ and the first cohomology of the poset associated to the Ext quiver of $A/I$, and yields a complete invariant for such modules. The next corollary then follows immediately from these considerations.

\begin{corollary}
Any distributive representation $M$ of an acyclic algebra $A$, and any thin representation $M$ of an arbitrary finite dimensional algebra $A$ (which is basic pointed), is completely determined up to isomorphism by $I=\ann(M)$ and an element $\alpha\in H^1(\Delta(P_{A/I}),\KK^*)$, where $P_{A/I}$ is the poset associated canonically with the acyclic quiver of the algebra $A/I$ (and consequently, $A/I\cong I(P_{A/I})$). Hence, if ${\mathcal D}$ be the set of ideals of $A$ which are annihilators of some thin representation (equivalently, a distributive representation when $A$ is acyclic), then the set of distributive (or thin) representations is in 1-1 correspondence with (and is classified by) the set
$$\bigsqcup\limits_{I\in {\mathcal D}}H^1\left(\Delta(P_{A/I}),\KK^*\right)$$
\end{corollary}





\vspace{.5cm}

\subsection{Classification of thin modules of finite dimensional algebras}

As we have seen before, to classify thin representations it is useful to also have a way to classifying such ideals $I$ which are annihilators of some thin module. For the case of incidence algebras, this was done above, and such ideals are parametrized by subposets closed under subintervals. Using the methods and ideas before, we extend that now to general finite dimensional algebras. It will be convenient to first deal with quivers, and then move to quivers with relations.

Let $Q$ be an arbitrary quiver (not assumed acyclic, nor finite). We recall that a quiver representation $V$ is said to be {\it nilpotent} if it is annihilated by cofinite monomial ideal of the path algebra $\KK[Q]$ of $Q$; it is locally nilpotent if every $v\in V$ is annihilated by such an ideal; see \cite{CKQ,CM} (see also \cite{DIN,DIN2} for the relation with comodules over the quiver coalgebra). Hence, a finite dimensional such $V$ is a representation of some finite dimenisonal quotient $A$ of $\KK[Q]$; conversely, any finite dimensional representation of a finite dimensional algebra is a nilpotent representation of a quiver $Q$. Obviously, to understand finite dimensional thin representations of finite dimensional algebras, it will be enough to reduce to this case. 

If $V$ is a finite dimensional nilpotent representation of $Q$, then $V$ is thin precisely when $\dim(V_a)\leq 1$ for all $a\in Q_0$; this is because when $V$ is nilpotent, the dimension vector $\underline{\dim}(V)$ captures the multiplicity of simples in $V$. Let $\Gamma=\Gamma_V$ be the subquiver of $Q$ consisting of vertices for which $V_a\neq 0$, and arrows $x:a\rightarrow b\in \Gamma_1$ such that $\lambda_x:V_a\rightarrow V_b$ is non-zero ( so $\lambda_x\in\KK^\times$). 

\begin{definition}
Given a thin representation $V$ of $Q$, the quiver $\Gamma=\Gamma_V$ will be called the support of $V$.
\end{definition}
 
Note that $\Gamma$ is acyclic. This follows either by the results of the previous section, since $\Gamma$ is precisely the Ext quiver of the algebra $\KK[Q]/{\rm ann}(V)$, which is an incidence algebra, or directly: since any path in $\Gamma$ acts as a non-zero element on some vertex, the nilpotent condition means that there is a bound on the length of paths in $\Gamma$, equivalently, $\Gamma$ is acyclic. 

As before, consider $\Gamma$ as a 1-dimensional simplicial set with vertices $\Gamma_0$ and 1-simplices the arrows $\Gamma_1$; the cochain complex $0\rightarrow \Hom_\ZZ(\ZZ\Gamma_0,\KK^\times)\rightarrow \Hom_\ZZ(\ZZ\Gamma_1,\KK^\times)\rightarrow 0$ realizes cohomology with coefficients in $\KK^\times$. A thin representation $V=(\lambda_x)_{x\in \Gamma_1}$ can be regarded as simply a 1-cocycle in on $\Gamma$ with coefficients in $\KK^\times$; as before in the previous sections, two representations $V=(\lambda_x)_x$ and $W=(\mu_x)_x$ are isomorphic precisely when there are $\alpha_a\in \KK^\times$ such that $\alpha_a\lambda_x=\mu_x\lambda_b$ for all $x:a\rightarrow b \in \Gamma_1$. Let $V=V(\Gamma,[\lambda])$ be the corresponding representation of support $\Gamma$ and given by $[\lambda]\in H^1(\Gamma,\KK^\times)$. We obtain the following analogue of \ref{t.discls}.

\begin{theorem}\label{t.thin}
The nilpotent finite dimensional thin representations of a quiver $Q$ are parametrized bijectively by pairs $(\Gamma,[\lambda])$, where $\Gamma$ is an acyclic subquiver of $Q$, and $[\lambda]\in H^1(\Gamma,\KK^\times)$.  
\end{theorem}

Moreover, a precise algorithm can be obtained for parametrizing representations. For any acyclic quier $\Gamma$, let $T$ be a spanning tree of $\Gamma$ (a subquiver whose graph is a tree and contains all vertices of $\Gamma$). Note that $H^1(\Gamma,\KK^\times)=(\KK^\times)^{e(\Gamma)-v(\Gamma)+1}$, where $e(\Gamma),v(\Gamma)$ are the number of edges and number of vertices, respectively (this is well known and follows, for example, from the above cochain complex computing $H^1$ using the spanning tree, or by computing the Euler-Poincare characteristic, etc.). An element in $H^1(\Gamma,\KK^\times)$ is thus completely determined by placing $1$ at the arrows of $T$, and arbitrary $\lambda\in\KK^\times$ at the other arrows in $\Gamma$.

\subsection*{Finite dimensional algebras.} Now, in order to generalize to finite dimensional algebras, let $A=\KK[Q]/I$ be a finite dimensional algebra given by a finite quiver $Q$ with relation ideal $I$. Any thin representation $V$ of $A$ will be a thin nilpotent representation of $Q$ of support $\Gamma\subset Q$; hence, we are looking at such thin representations $V(\Gamma,[\lambda])$ of $Q$ which are annihilated by $I$ (so they become $A$-modules), for various possible supports $\Gamma$. Fix a support $\Gamma$ and let $T$ be a fixed spanning tree of $\Gamma$. To each arrow $x$ of $\Gamma-T$ assign an indeterminate invertible variable $v_x$ and consider the algebra of Laurent polynomials $B=\KK[(x_v^{\pm 1})_{v\in \Gamma-T}]$ in $|\Gamma-T|$ variables. Let $q_s=\sum\limits_i \beta_{si} p_{si}$, $s=1,2,\dots,t$ be generators of the ideal $I$ (any cofinite ideal in $\KK[Q]$ can be generated with finitely many elements), and consider the polynomials $R_s\in B$ obtained from $q_s$ by replacing the arrows $x$ on the paths $p_{si}$ either by $1$ if $x$ is an arrow in $T$, or by its corresponding variable $v_x$ from $B$ if $x$ is not in $T$.  The equations $R_s=0$ are equations of an affine algebraic variety which gives a one-to-one parametrization of thin $A$-modules of support $\Gamma$, whose coordinate algebra is $\KK[(x_v^{\pm 1})_{v\in\Gamma-T}]/(R_1,\dots,R_t)$. Putting all these varieties together gives a variety parametrizing all thin $A$-modules. Furthermore, the algebra $\KK[x_v|v\in\Gamma-T]/(R_1,\dots,R_t)$ also gives a one-to-one parametrization 
of the thin modules whose support {\it contains} $\Gamma$, since in this case one may allow some of the variables $x_v$ for $v\in (\Gamma-T)_1$ to become $0$. For this, one needs only note that doing such a replacement of an $x_v$ is equivalent to simply ``deleting" that arrow from the quiver and all the paths in which it appears from the relation ideal; in this case, $T$ remains a spanning tree of the resulting quiver. We follow this on an example. 

\begin{example}
Consider the quiver 
$$Q:\,\,\xymatrix{\bullet \ar_{x}[r]\ar@/^2ex/^z[r] & \bullet \ar_{y}[r]\ar@/^2ex/^w[r] & \bullet  }$$ 
and let $A$ be the finite dimensional algebra obtained from this quiver with relation $\alpha xy+\beta xw+\gamma zy+\delta zw=0$. For support $\Gamma=Q$, a spanning tree consists of the path $zw$; substituting $1$ for $z$ and $w$ and arbitrary $x,y\in \KK^\times$ yields the variety $\alpha xy+\beta x+\gamma y+\delta=0; x,y\neq 0$ giving a one-to-one parametrization  of thin $A$-modules of support $Q$. Thin modules of support $Q-\{x\}$, respectively, $Q\setminus\{y\}$ correspond to setting $x=0$, and respectively, $y=0$, and are bijectively parametrized by equations $\gamma y+\delta=0; y\neq 0$ and $\beta x+\delta=0; x\neq 0$ respectively. Similarly, the equation $\delta=0$ gives the thin modules of support $Q-\{x,y\}$ (either no such module exists if $\delta\neq 0$ or otherwise there is exactly one). Thus the variety $\alpha xy+\beta x+\gamma y+\delta=0$ gives a bijective parametrization for the thin $A$-modules of support $\Gamma$ containing the path $zw$. Several other families of thin modules exist in this case (with other possible supports). Note that the affine variety $\alpha xy+\beta xw+\gamma zy+\delta zw=0$ gives a space parametrizing all thin modules (though not bijectively). The torus $\KK^\times\times \KK^\times\times \KK^\times$ acts on this variety as an usual action on a representation variety $(a,b,c)\cdot (x,y,z,w)=(b^{-1}xa,c^{-1}yb,b^{-1}za,c^{-1}wb)$, making this a toric variety whose orbits parametrize thin modules of this algebra $A$. 
\end{example}

Extending the considerations of the previous example, consider the ``double $A_n$" quiver 

$$AA_n:\,\,\xymatrix{\bullet \ar_{x_1}[r]\ar@/^2ex/^{x_1'}[r] & \bullet \ar_{x_2}[r]\ar@/^2ex/^{x_2'}[r] & \bullet \ar[r]\ar@/^2ex/[r] & & \dots \,\,\,\dots\,\,\, \dots & \ar[r]\ar@/^2ex/[r] & \bullet \ar_{x_n}[r]\ar@/^2ex/^{x_n'}[r] & \bullet}$$

We will have the following observation. Let $p(x_1,\dots,x_n)=\sum\limits_{1\leq i_1<\dots <i_t\leq n}\alpha_{i_1,\dots,i_t}x_{i_1}\dots x_{i_t}$ be a multilinear polynomial. The thin representations of the algebra $\KK[AA_n]/\langle p\rangle$	- that is, of the quiver $A_n$ with relations given by $p$ - and whose support contain the (quiver generated by the) path $x_1'x_2'\dots x_n'$ are parametrized by the variety $\KK[x_1,\dots,x_n]/p(x_1,\dots,x_n)$. Obviously, this idea can be extended to any multilinear variety, and we have the following theorem; its proof is a direct extension of the previous ideas, and is left to the reader. It shows that {\it any multilinear variety can be regarded as a moduli space of thin representations}.

\begin{theorem}\label{t.mthin}
Let $p_1,\dots,p_s$ be multilinear polynomials in $n$ variables $x_1,\dots,x_n$, and consider the affine linear variety $V:p_1=0;\dots;p_n=0$ and its coordinate algebra $H=\KK[x_1,\dots,x_n]/(p_1,\dots,p_s)$. Then points of the affine variety $V$ represent (in the sense they give a 1-1 parametrization) for the thin modules of the algebra $A_{p_1,\dots,p_s}=\KK[AA_n]/(p_1,\dots,p_s)$ (the quiver $AA_n$ with relations $p_1,\dots,p_n$) and whose support contains $x_1'x_2'\dots x_n'$.
\end{theorem}

\vspace{.5cm}

\section{Graphs and incidence matrices}\label{s.graphs}


We note here the connections to graphs and their incidence matrices, as well as another immediate application to computing the orbits of the action by conjugation of the diagonal subgroup $(\KK^\times)^n=Diag_n(\KK)$ of $Gl_n(\KK)$ on $M_n(\KK)$, where $\KK$ is a field which is not necessarily algebraically closed. These are applications of the main methods used above.

Let $A=(a_{ij})_{i,j}$ be an $n\times n$ matrix. The ``zero-nonzero pattern" of $A$, that is, the set of positions $(i,j)$ at which $a_{i,j}\neq 0$, can be readily encoded by an oriented graph $\Gamma_A$ with $n$ vertices numbered $1,\dots,n$ and an arrow $i\rightarrow j$ whenever $a_{i,j}\neq 0$. We call this the incidence graph of $A$; when $A$ is a $0,1$-matrix, then $A$ is just the incidence matrix of the graph $\Gamma_A$). This pattern is, of course, a common natural object to look at combinatorially, as it is for incidence algebras as well \cite{MW}. It is obviously invariant under conjugation by invertible diagonal matrices. Moreover, the matrix $A$ can be interpreted as a special type of representation $V$ of the quiver $\Gamma_A$, where at each vertex a 1-dimensional space is placed, and $a_{ij}\in\KK^\times$ are the morphisms corresponding to $i\rightarrow j$. This representation, however, does not have to be (locally) nilpotent (its annihilator does not have to contain a power of the arrow ideal; see above and \cite{DIN}), in which case the vertices of the quiver will not necessarily represent simple modules in the composition series of $V$, as the following example shows.

\begin{example}
Consider the quiver
$$\xymatrix{a\ar@/^/[r]^{x} & b\ar@/^/[l]^y}$$
and the representation $V$ as above with 1-dimensional spaces $V_a=V_b=\KK$ and with $x$ acting as the scalar $\mu\neq 0$ and $y$ as the scalar $\lambda\neq 0$. It is not difficult to see that, in fact, this is a simple module over this quiver; modulo the annihilator of $V$, the algebra is just isomorphic to $M_2(\KK)$. 
\end{example}

The case when the representation is nilpotent is closely related to our general theory. As noted before, $V$ is nilpotent exactly when the quiver $\Gamma_A$ is acyclic, in which case $V$ is thin and is completely determined by first cohomology; hence, it is not surprizing that the orbits of the above mentioned action are also described by cohomology. Given $A$, consider the relation on $P=\{1,\dots,n\}$ given by $i\preceq j$ if there is a (possibly trivial) path in $\Gamma_A$ from $i$ to $j$. This is reflexive and transitive, but not necessarily antisymmetric; hence, it is a quasi-order. It is antisymmetric exactly when $\Gamma_A$ is acyclic; in that case, by the results of the previous section, we have that $\KK[\Gamma_A]/{\rm ann}(V)$ is isomorphic to the incidence algebra of $(P,\preceq)$. This, however, holds in full generality, and gives an interesting connection between incidence matrices, incidence algebras, and thin representations. We prove first a more general fact. 

\begin{lemma}
Let $Q$ be a quiver, and $V=\left((V_a)_{a\in Q_0};(f_x)_{x\in Q_1}\right)$ a finite dimensional representation of $Q$ which has $V_a=\KK$ for every vertex $a\in Q$. Then $V$ is a thin representation. Moreover, $\KK[Q]/{\rm ann} V$ is isomorphic to the incidence algebra of the preorder $\preceq$ on $Q$, defined by $a\preceq b$ if there is a path $p=x_1x_2\dots x_n$ from $a$ to $b$ such that the corresponding maps $f_{x_i}$ defining the representation $V$ are non-zero (scalars). 
\end{lemma}
\begin{proof}
Let $V=\bigoplus\limits_{a\in Q_0}V_a$ and fix a basis $(v_a)_a$ of $V$ such that $V_a=\KK v_a$. Consider the representation map $\rho:\KK[Q]\rightarrow \End_\KK(V)=M_n(\KK)$, with identification via a basis $\{v_a|a\in Q_0\}$. If $a\preceq b$, then there is a path $p$ from $a$ to $b$ in $Q$, and it acts by a nonzero scalar taking $v_a$ to $\lambda_{ab}v_b$ ($\lambda_{ab}=f_{x_1}f_{x_2}\dots f_{x_n}$ if $p=x_1x_2\dots x_n$). This path $p$ acts as zero on any other $v_b$, $b\neq a$. It means that $\KK E_{ab}\subseteq {\rm Im}(\rho)$, where $E_{ab}$ are the usual matrix units in $M_n(\KK)$. On the other hand, we also quite obviously have ${\rm Im}(\rho)\subseteq\bigoplus\limits_{a\preceq b}\KK E_{ab}$, and hence ${\rm Im}(\rho)$ is a structural matrix algebra. The assertion now follows from the results of the previous section.
\end{proof}

Note that for such a quiver representation, although thin, the simple modules in its composition series are not necessarily 1-dimensional. On the other hand, not all thin modules of an arbitrary quiver will be of this type, as the previous example shows; in fact, classifying all thin modules of a quiver would include the classification of finite dimensional simple modules of arbitrary quivers and free algebras. (The classification of the previous subsection suffices though to understand thin modules over any finite dimensional algebra).  

The following corollary is the announced connection between graphs, incidence matrices, and thin representations. 

\begin{corollary}\label{c.gmt}
Let $A$ be an $n\times n$ matrix, let $\Gamma_A$ be the graph with $n$ vertices associated to $A$ via the zero-nonzero pattern of $A$, and let $V$ be the representation of $\Gamma_A$ associated to this matrix by placing $\KK$ at every vertex and $a_{ij}\neq 0$ for every arrow $i\rightarrow j$. Then $V$ is a thin $\Gamma_A$-representation, and $\KK[\Gamma_A]/{\rm ann}(V)$ is isomorphic to the incidence algebra associated to the pre-ordering $i\preceq j$ on $\{1,\dots,n\}$ defined by $i\preceq j$ if there is a path in $\Gamma_A$ from $i$ to $j$.  
\end{corollary}

From the proof above shows yet another interesting relation between incidence matrices of graphs and incidence algebras. If $M$ is the incidence matrix of the graph $\Gamma$, which is taken to be a Schurian quiver (i.e. for every pair of vertices $a,b$, there is at most one arrow with source $a$ and target $b$), then $M^n$ is a matrix with non-negative entries. The set of pairs $(i,j)$ for which in some power of $M$ the $(i,j)$ entry is non-zero is precisely given by the quasi-order, i.e. these are exactly the pairs $(i,j)$ for which $(i\preceq j)$. The structural subalgebra of $M_n(\KK)$ generated by this quasi-order is precisely the smallest structural subalgebra containing all the matrices $E_{ij}$ with $(i,j)$ such that $i\rightarrow j$ is an arrow in $\Gamma$; this corresponds to the ``quasi-order" generated by the oriented graph $\Gamma$.

\subsection*{The action of the diagonal group.}

To determine the orbits of the action by conjugation of the diagonal subgroup $Diag_n=(\KK^\times)^n$ of $Gl_n(\KK)$ on $M_n(\KK)$, one can look again at cohomology. As before, consider the 1-dimensional simplicial set $T$ whose 0 and 1-simplices are given by vertices and arrows of $\Gamma_A$, the associated simplicial chain complex
$$0\rightarrow \ZZ T_1\rightarrow \ZZ T_0\rightarrow 0$$
and the cochain complex  $0\rightarrow \Hom_{\ZZ}(\ZZ T_0,\KK^\times )\rightarrow \Hom_\KK(\ZZ T_1,\KK^\times)\rightarrow 0$ giving cohomology with coefficients in $\KK^\times$. A matrix $A=(a_{ij})$ can be interpreted as an element $A\in \Hom_\KK(\ZZ T_1,\KK^\times)$, and the computation before for quiver representations shows that if $A,B$ are two such matrices and $D={\rm Diag}(\lambda_1,\dots,\lambda_n)\in Diag_n$, then $DA=BD$ means that the representations of $\Gamma_A=\Gamma_B$ associated to $A$ and $B$ are isomorphic via $D$, so $a_{ij}^{-1}b_{ij}=\lambda_i\lambda_{j}^{-1}$, equivalently, $A$ and $B$ are equal in $H^1(T,\KK^\times)$. For simplicity, we will just denote this group by $H^1(\Gamma_A,\KK^\times)$. As noted before (by universal coefficients, for example), this group is a torus: $H^1(\Gamma_A,\KK^\times)=(\KK^\times)^t$, where $t=e(\Gamma_A)-v(\Gamma_A)+1=e(\Gamma_A)-n+1$, and $e(\Gamma_A),v(\Gamma_A)$ are the number of vertices and edges of $\Gamma_A$, respectively. We thus obtain. 

\begin{theorem}\label{t.lin}
Up to conjugation by the diagonal group $Diag_n=(\KK^\times)^n$, any matrix $A\in M_n(\KK)$ is completely determined by its zero-nonzero pattern encoded by the graph $\Gamma=\Gamma_A$, and an element $\lambda\in H^1(\Gamma_A,\KK^\times)=(\KK^\times)^{e(\Gamma_A)-n+1}$. The orbits of the action by conjugation of the diagonal group $Diag_n=\KK^\times$ on $M_n(\KK)$ are thus parametrized by pairs $(\Gamma,\lambda)$, where $\Gamma$ is an oriented graph on $n$ vertices (i.e. a Schurian quiver; loops allowed but single arrows between vetrices), and $\lambda\in H^1(\Gamma,\KK^\times)=(\KK^\times)^{e(\Gamma)-v(\Gamma)+1}=(\KK^\times)^{e(\Gamma)-n+1}$. 
\end{theorem}

One can thus obtain a canonical form for conjugation by diagonal matrices, by choosing some consistent rule for picking representatives in such orbits. We display such a possible canonical form. First, one determines the graph $\Gamma_A$. Consider the lexicographic ordering on $\{1,\dots,n\}\times\{1,\dots,n\}$. This induces a total ordering on the set of arrows of the (oriented) graph $\Gamma=\Gamma_A$. Now, proceed to find a spanning tree $\mathcal{T}$ of $\Gamma$, by eliminating arrows in (say ascending) lexicographic order. Algorithmically, this means the following: start by setting $\mathcal{T}=\Gamma$ (of course, this is usually not a tree at first); find the arrow $a_1$ which is least in lexicographic order and which belongs to some (possibly not oriented) cycle in $\mathcal{T}$, and remove this from $\mathcal{T}$. Then proceed to find the next arrow $a_2$ which lays on some cycle of the current $\mathcal{T}$, and remove it from $\mathcal{T}$. Continue this until no more cycles exist in $\mathcal{T}$, at which point $\mathcal{T}$ is such a spanning tree of $\Gamma$. Then $A$ is conjugate to a unique matrix $C=C(A)=(c_{ij})_{i,j}$ which has the same zero-nonzero pattern $\Gamma$, and has $1$ at all positions $(i,j)$ corresponding to arrows $i\rightarrow j$ which are in $\mathcal{T}$; the elements of the other positions are uniquely determined by the equation $DA=CD$ for some diagonal invertible matrix $D=Diag(d_1,\dots,d_n)$, which means $c_{ij}=d_ia_{ij}d_j^{-1}$. The condition $c_{ij}=1$ for $(i\rightarrow j)$ arrow in $\mathcal{T}$ means $d_j=a_{ij}d_i$ for such pairs $(i,j)$. We may choose $d_1=1$, and successively determine all the other $d_j$'s following the path in $\mathcal{T}$ from vertex $1$ to vertex $j$, after which the $c_{ij}$ for arbitrary $i,j$ are determined. Other variants are certainly possible, depending on what is needed; for example, one may choose to remove all loops first, and then proceed with as above with the remaining arrows (although the loops will always get removed in the process). We note also that this procedure can also be done on arbitrary quivers (not only Schurian quivers) in order to algorithmically parametrize thin modules.

For an example consider the matrix $A$ below, where each of the entries $a_{ij}$ are some non-zero elements in $\KK$; its canonical form $C$  according to the algorithm above, as well as the diagonal conjugation matrix $D$, are computed. 

{\small
$$
A=\left(\begin{array}{cccc} a_{11} & a_{12} & a_{13} & 0 \\ 0 & a_{22} & 0 & a_{24} \\ a_{31} & a_{32} & a_{33} & a_{34} \\ 0 & a_{42} & 0 & a_{44} \end{array}\right)
\,\,\,\,\,
\,\,\,\,\,
D=\left(\begin{array}{cccc} 1 & 0 & 0 & 0 \\ 0 & a_{31}^{-1}a_{34}a_{42} & 0 & 0 \\ 0 & 0 & a_{31}^{-1} & 0 \\ 0 & 0 & 0 & a_{31}^{-1}a_{34}  \end{array}\right)
$$
}
$$
C=\left(\begin{array}{cccc} a_{11} & a_{12}a_{42}^{-1}a_{34}^{-1}a_{31} & a_{13}a_{31} & 0 \\ 0 & a_{22} & 0 & a_{24}a_{42} \\ 1 & a_{32}a_{42}^{-1}a_{34}^{-1} & a_{33} & 1 \\ 0 & 1 & 0 & a_{44} \end{array}\right)
$$

Below, the graph $\Gamma_A$ as well as the spanning tree $\mathcal{T}$ obtained as in the algorithm described are also depicted. In the graph $\Gamma_A$, the bold arrows are the arrows remaining after removing the other ``regular" arrows to obtain $\mathcal{T}$.

$$
\xymatrix{
1 \ar@(ul,dl)[]\ar@/^/[r]\ar@/^/[dr] & 2\ar@(ur,dr)[]\ar@/^/[dl] & & 1 & 2 \\
4 \ar@(ul,dl)[]\ar@/^/@<-.2mm>[ur]\ar@/^/[ur] & 3\ar@(ur,dr)[]\ar@/_/[u]\ar@/_/@<-.2mm>[ul]\ar@/_/[ul]\ar@/^/@<-.2mm>[l]\ar@/^/[l] & & 4\ar@/^/@<-.2mm>[ur]\ar@/^/[ur] & 3\ar@/_/@<-.2mm>[ul]\ar@/_/[ul]\ar@/^/@<-.2mm>[l]\ar@/^/[l]
}
$$

In fact, one observes from the above example that the entry $c_{ij}$ is obtained by multiplying $a_{ij}$ by a product of $a_{kl}$'s or their inverses as follows: let $\xymatrix{j=i_0 \ar@{-}[r] & i_1\ar@{-}[r] & ... \ar@{-}[r]& i_t=i}$ be the unique path in $\mathcal{T}$ from $j=i_0$ to $i=i_t$ (this is not necessarily an oriented path). Walk this path from $i_0$ to $i_t$, and for each arrow $k\rightarrow l$ traversed, multiply by either $a_{kl}$ or $a_{kl}^{-1}$ according to whether this arrow is traversed in forward or backward direction when walking from $j=i_0$ to $i=i_{t}$. For instance, in the example above, $c_{12}=a_{12}\cdot a_{42}^{-1}a_{34}^{-1}a_{31}$.




\section{An application to the accessibility conjecture}\label{s.ac}

We note here an application that yields a particular case of an interesting conjecture (or rather, open question) due to Bongartz and Ringel, that every finite dimensional module over a pointed algebra is accessible. In this generality, K. Bongartz states \cite{Bo} that the answer in general is probably negative. We answer this in the positive here for the particular case of distributive modules over acyclic algebras (and so also for thin modules), the ``combinatorial case".  

We use our setup and results of the previous section to transfer the problem from indecomposable representation to a fairly simple graph theoretical statement. First, the following is quite easy and likely known.

\begin{lemma}
Let $P$ be a poset, $A=I(P,\KK)$ and $M$ be the defining representation. Then $M$ is indecomposable if and only if $M$ is connected.
\end{lemma}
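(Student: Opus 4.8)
The plan is to prove both implications directly in terms of the defining representation of $I(P,\KK)$, using the explicit description of subrepresentations given in Lemma \ref{l.reps}. Recall that the defining representation $M$ has basis $(e_x)_{x\in P}$ with $f_{xy}\cdot e_y = e_x$ for $x\le y$; here ``$M$ is connected'' means the Hasse quiver $Q$ of $P$ (equivalently the comparability graph of $P$) is connected. Both directions will be packaged around the observation that an $A$-submodule of $M$ is precisely a $\KK$-span of a ``down-closed'' subset: if $N\le M$ and $e_x$ appears with nonzero coefficient in some element of $N$, then applying the idempotent $f_{xx}$ shows $e_x\in N$, and then applying $f_{zx}$ for $z\le x$ shows $e_z\in N$. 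So submodules of $M$ correspond bijectively to order ideals (down-sets) $D\subseteq P$, via $D\mapsto M_D:=\mathrm{Span}_\KK\{e_z : z\in D\}$.

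First I would prove the contrapositive of the forward direction: if $M$ is disconnected, then $M$ decomposes. Suppose the comparability graph of $P$ splits as a disjoint union $P = P_1 \sqcup P_2$ with no relations between $P_1$ and $P_2$. Then both $P_1$ and $P_2$ are simultaneously down-closed and up-closed (a relation $x\le y$ with $x\in P_1$, $y\in P_2$ is forbidden, and vice versa), so $M_{P_1}$ and $M_{P_2}$ are both submodules, and clearly $M = M_{P_1}\oplus M_{P_2}$ as $A$-modules, a nontrivial direct sum decomposition. (Equivalently, $I(P,\KK) = I(P_1,\KK)\times I(P_2,\KK)$ as a product of algebras, and $M$ is the external direct sum of the corresponding defining representations.)

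For the converse, suppose $M$ is connected but $M = N_1\oplus N_2$ with both summands nonzero. Each $N_i$ is a submodule, hence $N_i = M_{D_i}$ for an order ideal $D_i\subseteq P$, and since the $e_x$ form a basis respecting the decomposition we get $P = D_1 \sqcup D_2$ as a set. Now I claim there can be no comparability between an element of $D_1$ and one of $D_2$: if $x\in D_1$, $y\in D_2$ and, say, $x\le y$, then $e_x = f_{xy}\cdot e_y \in A\cdot N_2 = N_2$, contradicting $e_x\in N_1$ and $N_1\cap N_2 = 0$; the case $y\le x$ is symmetric. Hence $D_1$ and $D_2$ are unions of connected components of the comparability graph, contradicting connectedness of $P$. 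Therefore $M$ is indecomposable.

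I do not expect a serious obstacle here; the only point requiring a little care is the bookkeeping that submodules of the defining representation are exactly the $M_D$ for $D$ an order ideal — but this is essentially the content of Lemma \ref{l.reps}(vi) specialized to $M$ (or can be re-derived in two lines as above), so it can be invoked directly. One should also be slightly careful about the quasi-ordered case if one wants the statement in that generality, but as stated the lemma is for a poset $P$ and the argument above suffices verbatim.
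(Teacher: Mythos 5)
Your proof is correct, but it takes a different route from the paper. You classify the submodules of the defining representation $M$: every submodule is spanned by the basis vectors it contains (via the idempotents $f_{xx}$) and corresponds to an order ideal of $P$, and then you read off both implications from the fact that a direct sum decomposition $M=N_1\oplus N_2$ forces a partition of $P$ into two ideals with no comparabilities between them. The paper instead computes the endomorphism ring: any $\varphi\in\End_A(M)$ satisfies $\varphi(m_x)=\lambda_x m_x$, and applying $\varphi$ to $f_{xy}\cdot m_y=m_x$ gives $\lambda_x=\lambda_y$ for $x\leq y$, so $\End_A(M)\cong\KK^{c}$ with $c$ the number of connected components; indecomposability for connected $P$ follows since the endomorphism ring has no nontrivial idempotents. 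The trade-off: the paper's computation yields the sharper byproduct, stated in the remark immediately after the lemma, that $\End_A(M)\cong H^0(\Delta(P),\KK)$, which your argument does not give directly; your argument, on the other hand, gives a complete description of the submodule lattice of $M$ (spans of down-sets), which is in the spirit of how the paper later analyzes sub- and quotient modules $M(S)$ in the accessibility argument. One small point of care, which you already handled: Lemma \ref{l.reps}(vi) as stated concerns the projective indecomposables $P_x$, not $M$ itself, so it cannot be invoked verbatim; your two-line re-derivation using $f_{xx}$ and $f_{zx}$ is the right fix and closes that gap.
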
 
\begin{proof}
The only if is obvious, so assume $P$ is connected. A simple computation shows that any endomorphism $\varphi\in\End_A(M)$ must have $\varphi(m_x)=\lambda_xm_x$ (use $f_{xy}\varphi(m_z)=\delta_{y,z}\varphi(m_x)$ for $x=y$); furthermore, applying $\varphi$ to $f_{xy}\cdot m_y=m_x)$ yields $\lambda_x=\lambda_y$ for $x\leq y$ for 
\end{proof}

Note that the above Lemma states that the endomorphism ring of the defining representation is nothing but $H^0(\Delta(P),
KK)$ (with ring structure as a product of $\KK$'s). Now, the reduction idea is as follows. If $M$ is a distributive representation over some acyclic algebra $A$ which is Schur (it is enough that $M$ is acyclic, i.e. $A/ann(M)$ is acyclic, equivalently, $M$ is thin), then by Theorem \ref{t.Ann}, the algebra $B=A/ann(M)$ is isomorphic to an incidence algebra of a poset $P$, and moreover there are bases $f_{xy}$ and $m_x$ ($x\in P$) of $B$ and $M$ with $f_{xy}\cdot m_z=\delta_{y,z}m_x$. Thus, the lattice of submodules of $M$ is completely determined over $B$, and hence we can reduce the problem to the case when $M$ is the defining representation of an incidence algebra $B=I(P,\KK)$. For each closed subposet $S$ of $P$ denote by $M(S)$ the defining representation of $S$; if $M$ is regarded as a poset representation, $M(S)$ is is obtained from $M$ by replacing the $\KK$ at vertices $x\notin S$ with $0$'s. In the terminology of the previous section, $M(S)$ has support $S$. Note that if $x$ is either a minimal or a maximal vertex in $P$, then $M(P\setminus \{x\})$ is a quotient, respectively, a submodule, of $M$, and these are precisely all the quotients and submodules of $M$. It is then enough to show that we can remove a min or max vertex of $P$ and keep the resulting poset $P\setminus \{x\}$ connected. Indeed, we have the following easy combinatorial statement, which is likely known but we could not locate a reference, and include a brief proof.

\begin{lemma}
Let $P$ be a connected poset. Then there is either a min or a max vertex such that $P\setminus \{x\}$ is connected.  
\end{lemma}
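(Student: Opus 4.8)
The plan is to prove the statement by induction on $|P|$, viewing the Hasse diagram of $P$ as a connected (undirected) graph and exploiting the fact that a finite connected graph always has a vertex whose removal keeps it connected — indeed, any leaf of a spanning tree works. The subtlety is that removing such a ``graph-safe'' vertex $x$ is only useful to us if $x$ is \emph{minimal or maximal} in the poset $P$, since only then is $M(P\setminus\{x\})$ a sub- or quotient module of $M$; an arbitrary interior vertex of the Hasse diagram need not be extremal in $P$. So the real content is: among the (possibly many) vertices whose removal disconnects nothing, at least one can be chosen to be a source or a sink of the Hasse quiver.

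First I would fix a spanning tree $T$ of the Hasse diagram of $P$. Pick any leaf $\ell$ of $T$; then $P\setminus\{\ell\}$ is connected (a connected graph minus a leaf of a spanning tree is connected). If $\ell$ happens to be a minimal or maximal element of $P$ we are done. Otherwise, $\ell$ has at least one lower cover and at least one upper cover in $P$. The idea is then to ``walk up'' (or down) from $\ell$ inside $T$: since $\ell$ is not maximal, it has an upper cover, and I would look at a \emph{maximal element $y$ of $P$ that is reachable from $\ell$ by an increasing path}. Such a $y$ is a sink of the Hasse quiver, hence maximal in $P$. The remaining task is to arrange that $P\setminus\{y\}$ is still connected. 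For this, I would choose $y$ more carefully: among all maximal elements of $P$, consider the subgraph $T'$ obtained from a spanning tree, and pick a maximal element $y$ that is a leaf of that spanning tree when restricted appropriately — more precisely, take a spanning tree $T$ of the Hasse diagram and among its leaves, if some leaf is extremal we are done; if not, then every leaf of $T$ has both an upper and a lower neighbour in $P$, and I would argue that one can re-root or modify $T$ so that some maximal (or minimal) vertex becomes a leaf.

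Here is the cleaner route I would actually take, avoiding tree surgery. Let $m$ be a maximal element of $P$. If $P\setminus\{m\}$ is connected, done. If not, $m$ is a cut vertex of the Hasse diagram, so $P\setminus\{m\}$ splits into connected components $C_1,\dots,C_k$ with $k\ge 2$, each $C_i$ joined to $m$ only through edges at $m$ (covers of $m$ or elements covered by $m$ lying in $C_i$). Now restrict attention to one component, say $C_1$, together with $m$: this is again a connected poset (a closed-subposet-type argument is not needed — it is just an induced connected subgraph that happens to carry the restricted order), and it is \emph{strictly smaller}. By induction it has a min or max vertex $x$ whose removal keeps $C_1\cup\{m\}$ connected. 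I would then check that $x$ can be taken to be a min/max vertex of all of $P$: if $x\ne m$, then $x$ is extremal within $C_1\cup\{m\}$, and I must verify it is extremal in $P$ — this holds provided $x$ has no covers in $P$ lying outside $C_1$, which is automatic because $C_1$ is a full connected component of $P\setminus\{m\}$ and the only extra edges go to $m$; a maximal element of $C_1\cup\{m\}$ different from $m$ therefore has all its neighbours inside $C_1\cup\{m\}$, hence is maximal in $P$. Finally, removing $x$ from $P$ leaves $(C_1\cup\{m\})\setminus\{x\}$ connected and still attached to every other $C_j$ through $m$, so $P\setminus\{x\}$ is connected.

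The main obstacle I anticipate is precisely the bookkeeping in that last step: ensuring that the extremal vertex produced by the inductive hypothesis inside the smaller poset $C_1\cup\{m\}$ is genuinely extremal in $P$ and that its removal does not sever $P$ — in particular handling the borderline case $x=m$ (which cannot occur here, since by hypothesis $P\setminus\{m\}$ was disconnected, so the induction would not hand us $x=m$ as a ``good'' vertex unless $C_1\cup\{m\}$ had $m$ as its only extremal good choice, a situation I would rule out by instead choosing the component whose union with $m$ still has $m$ as a cut vertex, or by symmetry passing between the ``max'' and ``min'' cases). Everything else is routine graph connectivity reasoning.
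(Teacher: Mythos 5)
The decisive step of your induction has a genuine gap. After choosing a maximal element $m$ with $P\setminus\{m\}$ disconnected into components $C_1,\dots,C_k$, you apply the inductive hypothesis to $C_1\cup\{m\}$ and need it to produce an extremal vertex $x\neq m$. But the hypothesis is purely existential, and $m$ itself is always a legitimate witness for $C_1\cup\{m\}$: it is maximal there, and $(C_1\cup\{m\})\setminus\{m\}=C_1$ is connected by the very definition of a connected component. Your parenthetical dismissal of the case $x=m$ confuses removing $m$ from $P$ (disconnected, by assumption) with removing $m$ from $C_1\cup\{m\}$ (always connected), so the case you need to exclude is exactly the one the induction is free to hand you. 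The proposed repairs do not work either: there is no component $C_i$ for which $m$ is a cut vertex of $C_i\cup\{m\}$ (again because $C_i$ is connected), and switching between the ``max'' and ``min'' versions does not control which witness the inductive hypothesis supplies. To salvage this route you would have to prove a strictly stronger statement by induction, e.g.\ that every connected poset with at least two elements has at least \emph{two} extremal vertices whose removal preserves connectivity (the analogue of ``a tree with at least two vertices has at least two leaves''), and nothing in your argument establishes that.

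Note also that your opening reduction (take a leaf of a spanning tree of the Hasse diagram) is abandoned precisely because such a leaf need not be extremal, so it carries no weight in the final argument. The paper sidesteps all of this without induction: it forms the bipartite graph $G$ on ${\rm Min}\cup{\rm Max}$ with an edge $x-y$ whenever $x\in{\rm Min}$, $y\in{\rm Max}$, $x\leq y$, takes a leaf $x$ of a spanning tree of $G$, and checks directly that $P\setminus\{x\}$ is connected, using that $G\setminus\{x\}$ is connected and that chains between extremal vertices other than $x$, as well as chains from any $z\neq x$ up to a maximal element, never pass through the minimal (or, dually, maximal) vertex $x$. If you want to keep your inductive framework, proving the two-witness strengthening (which in fact follows from the spanning-tree argument, since a tree has two leaves) is the missing ingredient.
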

\begin{proof}
Let ${\rm Min}={\min(P)}$ and ${\rm Max}={\max(P)}$ be the set of min, respectively, max, vertices of $P$. Consider the bipartite (non-oriented) graph $G$ whose vertices are ${\rm Min}\cup {\rm Max}$ and an edge between $x\in {\rm Min}$ and $y\in {\rm Max}$ exists exactly when $x\leq y$. Let $T$ be a spanning tree (i.e. maximal subtree) of $G$, and let $x$ be a vertex of $T$ of degree $1$ (such $x$ exists). Assume for example $x\in {\rm Min}$ (the other case is similar), and that $x--y$ is the unique edge adjacent to $x$ in $G$, so $y\in {\rm Max}$ and $x<y$ (if $x=y$, $P$ would be disconnected). Then $P\setminus \{x\}$ is connected. Note that as $G\setminus \{x\}$ is connected (since $T\setminus\{x\}$ is so), then by the definition of $G$ it is easy to see that the vertices of $G\setminus \{x\}$ lay in the same connected component $Q$ of $P\setminus \{x\}$, since connections between these vertices are made without going through $x$. Since every $z\in P$ is connected to some $t\in{\rm Max}$ (with a path not involving $x$), the conclusion follows.   
\end{proof}

We are ready to give the result on (combinatorial part of) the general no gap/accessibility conjecture, which follows from the considerations above.  

\begin{theorem}\label{t.accessible}
Let $M$ be an indecomposable module over an acyclic algebra, which is distributive, or a thin module over some finite dimensional algebra. Then $M$ is accessible. 
\end{theorem}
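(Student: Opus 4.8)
The plan is to reduce the statement, via Theorem~\ref{t.Ann} and the two lemmas above, to a purely combinatorial induction on the size of a connected poset, peeling off one extremal vertex at a time.

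\emph{Reduction.} Whether $M$ is a thin module over an arbitrary finite dimensional algebra, or a distributive module over an acyclic algebra, in both cases $M$ is a faithful thin (equivalently distributive, equivalently $1$-)representation of $A/\ann(M)$, and $A/\ann(M)$ is acyclic; hence by Theorem~\ref{t.Ann} there is a finite poset $P$ together with a choice of bases identifying $A/\ann(M)\cong I(P,\KK)$ and $M$ with the defining representation of $I(P,\KK)$. Since the lattice of submodules of $M$, and a fortiori $\End(M)$, depend only on $A/\ann(M)$, indecomposability of $M$ is preserved by this identification; by the first Lemma, $P$ is then connected. Thus it suffices to show: for every connected finite poset $P$, the defining representation of $I(P,\KK)$ is accessible.

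\emph{Induction on $n=|P|=\dim_\KK M$.} If $n=1$, then $M$ is simple and accessible by definition. Let $n>1$. By the second Lemma there is an extremal vertex $x\in P$ --- minimal or maximal --- with $P':=P\setminus\{x\}$ still connected. As noted in the discussion preceding the theorem, $M(P')$ is a submodule of $M$ if $x$ is maximal and a quotient of $M$ if $x$ is minimal; in either case $M(P')$ has length $n-1$, and since $P'$ is a closed subposet of $P$ it is precisely the defining representation of $I(P',\KK)$, which is indecomposable by the first Lemma because $P'$ is connected. By the induction hypothesis, $M(P')$ is accessible, so $M$ has an accessible indecomposable submodule or quotient of length $n-1$; that is, $M$ is accessible. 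This closes the induction and proves the theorem.

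\emph{Main point.} The argument is driven entirely by the two lemmas and by Theorem~\ref{t.Ann}, so there is no serious obstacle beyond bookkeeping; the step requiring the most care is the reduction, where one must check that both hypotheses (``thin over any $A$'' and ``distributive over an acyclic $A$'') do place $M$ in the setting of Theorem~\ref{t.Ann} --- for a thin module it is acyclicity of $A/\ann(M)$, not of $A$, that is used, and this is automatic --- and that passing to $A/\ann(M)$ genuinely preserves indecomposability, which holds because the entire submodule structure of $M$ is already visible over $A/\ann(M)$.
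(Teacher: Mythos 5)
Your proposal is correct and follows the paper's own route essentially verbatim: reduce via Theorem~\ref{t.Ann} to the defining representation of $I(P,\KK)$ for a connected poset $P$, then induct by removing a minimal or maximal vertex that keeps $P$ connected (the second lemma), using the first lemma to guarantee indecomposability of the resulting submodule or quotient. The only difference is that you spell out the induction explicitly, which the paper leaves implicit in the phrase ``follows from the considerations above.''
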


\subsection*{The thin covers of a representation}

We briefly note a possible strategy to approach the general case of the accessibility conjecture. The methods of the previous sections suggest the following general procedure. Let $V$ be a finite dimensional module over a finite dimensional basic pointed algebra. For each composition series $0=V_0<V_1<\dots V_n=V$ ($A$-invariant flag), pick a basis $(v_i)_i$ compatible with this (so $V_i=V_{i-1}+\KK v_i$) and consider the representation map $\eta:A\rightarrow M_n(\KK)$, where the matrix coefficients are with respect to this basis. Then $\eta(a)=\eta_{ij}(a)$ is upper triangular for all $a$; let $B$ be the structural matrix subalgebra of $M_n$ which is the smallest of all structural subalgebras containing $A$. In other words, we introduce a relation such that $i\leq j$ whenever $\eta_{ij}\neq 0$, and we transitively complete this to an order relation. Note that $B$ will be an incidence algebra of a poset (not just quasi-ordered set), because of the upper triangular condition. Then $\eta(A)\subseteq B$, and $B$ naturally acts on $V$. Hence, $V$ is a $B$-module such that the restriction of $V$ to $A$ via $\eta$ gives back the old module ${}_AV$. Moreover, $V$ is thin as a $B$-module, by our results. We call such an $V$ a ``thin cover" of ${}_AV$. We record this formally in the following definition.   

\begin{definition}
Let $V$ be a finite dimensional module over a finite dimensional basic algebra $A$. A thin cover of $V$ is a pair $(W,B)$ where $B$ is an incidence algebra containing (extending) $A/\ann_A(V)$ and $W$ is a faithful thin $B$-module, and such that $V$ is the restriction of $W$ to $A$.
\end{definition}

We note that the collection of all posets (up to isomorphism) that can be obtained via this process is an invariant of the representation $V$, which is intimately related to the lattice of submodules of $V$. 

To apply this idea to the no-gap/accessibility conjecture, note that $V$ is an indecomposable module, and $B$ is as an incidence algebra as above containing $A/\ann(V)$, then $V$ remains indecomposable over $B$. By the combinatorial part, it has either a maximal submodule $M$ or maximal quotient $V/S$ which is indecomposable, and the restriction of this module to $A$ seems like a good candidate for the module conjectured to exist. These modules will not always restrict to indecomposable modules, but nevertheless, if such a submodule or quotient exists, it can be  obtained from an invariant flag, and consequently it will be one of the modules obtained from some thin cover. We refer to Example 4.8 of \cite{IS}, where it is shown how the indecomposable of maximal dimension of quivers of type ${\mathbb D}_4$  (and ${\mathbb D}_n$) can be obtained by restriction from quivers of type ${\mathbb A}_m$, and corresponding maximal submodules and quotients can be read from appropriate restrictions. 

\begin{example}
Consider the quiver 
$$\xymatrix{{}^{a}\bullet \ar@/^1ex/[r]^{x}\ar@/_1ex/[r]_y & \bullet^b}$$
let $A$ be its path algebra (with multiplication following concatenation as before) and let $P$ be the $3$-dimensional projective left module with basis $\{x,y,b\}$. Consider the identification $\End_\KK(P)=M_3(\KK)$ via this basis. The module $P$ is faithful, and $A$ embeds in $\End_\KK(V)$ as the set of matrices $\left\{\left( \begin{array}{ccc} \alpha & 0 &  \beta \\ 0 & \alpha & \gamma \\ 0 & 0 & \delta \end{array}\right) \, \vert\, \alpha,\beta,\gamma,\delta\in\KK \right\}$. The smallest incidence algebra containing $A$ is $\left( \begin{array}{ccc} \KK & 0 & \KK \\ 0 & \KK & \KK \\ 0 & 0 & \KK  \end{array}\right)$, which is the incidence algebra of the poset $$\xymatrix{& \bullet^3 & \\ \bullet_1\ar[ur] & & \bullet_2\ar[ul]}$$
It is not difficult to see that up to isomorohism this is the only thin cover of $P$. The second picture captures the ``generic" shape of the lattice of submodules of $P$. 
\end{example}

\begin{example}
Let $Q$ be the quiver of type a ${\mathbb D}_4$ wich has a source vertex. As noted before, \cite[Example 4.8]{IS} can be regarded as giving a thin cover of the maximal indecomposable $V$ of $Q$; the incidence algebra of the cover is the path algebra of an ${\mathbb A}_5$ quiver. We give here another example of a thin cover of this $V$. Let $P$ be the poset
$${\small\xymatrix{ 3 & & 4 & & 5 \\ & 1\ar[ul]\ar[ur]\ar[urrr] & & 2\ar[ulll]\ar[ul]\ar[ur] & }}$$
and consider the embedding of algebras $$A=\left( \begin{array}{ccccc} \lambda & 0 & \mu & \delta & \theta \\ 0 & \lambda & \mu & \delta & \theta \\ 0 & 0 & \alpha & 0 & 0 \\ 0 & 0 & 0 & \beta & 0 \\ 0 & 0 & 0 & 0 & \gamma  \end{array} \right)
\hookrightarrow
\left( \begin{array}{ccccc} \KK & 0 & \KK & \KK & \KK \\ 0 & \KK & \KK & \KK & \KK \\ 0 & 0 & \KK & 0 & 0 \\ 0 & 0 & 0 & \KK & 0 \\ 0 & 0 & 0 & 0 & \KK  \end{array} \right)=I(P,\KK)=B$$
Here, in the in the algebra $A$, $\lambda,\mu,\delta,\theta,\alpha,\beta,\gamma$ are arbitrary elements of $\KK$. It is not difficult to note that $A$ is isomorphic to the path algebra of $Q$ and $I(P,\KK)$ is a thin cover; moreover, the restriction of the defining representation of $I(P,\KK)$ to $A$ is precisely the indecomposable $V$. Note also that the restriction to $A$ of the quotient of ${}_{B}V$ obtained by eliminating vertex $1$ remains indecomposable over $A$ (nevertheless, the restriction of any of its still indecomposable maximal $B$-submodules are no longer indecomposable over $A$). It is not difficult to see also that this thin cover is ``generic" in the appropriate affine space (consider all ways of defining $V$ up to isomorphism).     
\end{example}

The last example suggests that one may try to single out special thin covers by considering an appropriate Zariski topology, and looking at what thin covers are generic; again, \cite[Example 4.8]{IS} shows that even non-generic covers can be of use for the above mentioned accessibility conjecture.  



\section{Categorification}

We aim to give a few categorical interpretations and applications of distributive representations and incidence algebras. 

\subsection*{Deformations of monoidal structures of vector spaces graded by posets}

We begin with an interpretation of the third cohomology group $H^3(\Delta(P),\KK^*)$. This is will be  similar to the categories $\KK{-\mathbf{Vec}}_G^\omega$ \cite[Section 2.3]{EGNO}, of $G$-graded vector spaces over a group (or monoid) $G$ twisted by a 3-cocycle, equivalently, modules over the algebra of functions on $G$ viewed as {\it quasi-Hopf algebra} (comultiplication coassociative up to conjugation by $\omega^*$), or comodules over the group algebra $\KK G$ viewed as a {\it co-quasi-Hopf algebra} (with multiplication associative up to conjugation by $\omega$). In fact, if one extends these well known constructions to semigroup algebras of partially defined semigroups $S$ and $S$-graded vector spaces, then the two can be interpreted as instances of the same situation.

We refer the reader to \cite{DNR,EGNO} for definitions regarding coalgebras, bialgebras, monoidal and tensor categories. On the incidence algebra $I(P,\KK)$ define the comultiplication $\delta:I(P,\KK)\rightarrow I(P,\KK)\otimes I(P,\KK)$ by $\delta(f_{xy})=f_{xy}\otimes f_{xy}$. This is inherited from a correspondingly defined comultiplication on the path algebra. Namely, on the path algebra $\KK[Q]$ of any quiver $Q$ one can define the comultiplication $\delta$ by $\delta(p)=p \otimes p$ for all paths $p$. It is easy to note that with this $\KK[Q]$ becomes a weak-bialgebra (possibly non-unital if $Q$ has infinitely many vertices); this comultiplication is ``responsible" for the monoidal category structure on $Rep(Q)=\KK[Q]{\rm-Mod}$. Indeed, any two $Q$-representations $M,N$ can be tensored point-wise, i.e. $(M\otimes N)_x=M_x\otimes N_x$ and for each arrow $a:x\rightarrow y$, the morphism $(M\otimes N)_a$ corresponding to this arrow is $(M\otimes N)_a=M_a\otimes N_a$. This means that $a$ acts on $M\otimes N$ as $a\cdot (m\otimes n)=(a\cdot m)\otimes (a\cdot n)$, therefore, the comultiplication $\delta$ is obtained. 

Given any quiver $Q$ and an ideal $I$ of $\KK[Q]$, if $I$ is also a coideal, then the bialgebra structure of $\KK[Q]$ induces to $\KK[Q]/I$. This is the case for an incidence algebra $I(P,\KK)$, since the kernel $K$ of the canonical map $\KK[Q]\rightarrow I(P,\KK)$ (where $Q$ is the Hasse diagram of $P$) is generated by relations $p-q$ with $p,q$ paths sharing starting points, as well as ending points. This $K$ is easily seen to be a coideal. Alternatively, one sees immediately that such relations (i.e. commutativity of appropriate diagrams) are preserved by tensoring two representations of the poset $P$; hence tensoring will yield a new representation of the poset. 

Let $\omega\in H^3(\Delta(P),\KK^*)$. We will consider the category of $\KK$-vector spaces graded by $P$, or more precisely, by intervals in $P$. Hence, objects are $V=\bigoplus\limits_{x\leq y;\,x,y\in P}V_{x,y}$, with component preserving morphisms, and a tensor product defined by $(V\otimes W)_{x,y}=\bigoplus\limits_{x\leq z\leq y}V_{x,z}\otimes V_{z,y}$ (note that unlike tensor categories, in such representations-of-quivers categories one may well have tensor products of non-zero objects be zero). The definition is extended similarly on morphisms. The category has a unit object ${\mathbf 1}=\bigoplus\limits_{x\in P}\KK_x$, where $\KK_x$ is the 1-dimensional vector space ``concentrated" in degree $(x,x)$. Denote this category $\KK-{\mathbf{Vec}}_P$. This category can also be regarded as the category of comodules over $I(P,\KK)$, or equivalently, the category of modules over the incidence coalgebra $\KK P$ of $P$. This is a bialgebra which, as a coalgebra, is dual to $I(P,\KK)$; let $c_{xy}$ be the basis dual to $f_{xy}$, so $\delta_{\KK P}(c_{xy})=\sum\limits_{x\leq z\leq y}c_{xz}\otimes c_{zy}$. On this basis, the multiplication of $\KK P$ is defined such that $c_{xy}$ are orthogonal idempotents (dual to the comultiplication of $I(P,\KK)$), so $\KK P$ is commutative semisimple as an algebra. This last part is similar to regarding $\KK{-\mathbf{Vec}}_G$ as the category of modules over the function (Hopf) algebra of $G$.  

Finally, let $\KK-{\mathbf{Vec}_P^\omega}$ be this category as an abelian category, but with tensor product having the associativity constrain changed to $a_{x,y,z,t}=\omega(x,y,z,t):\KK f_{xy}\otimes (\KK f_{yz}\otimes \KK f_{zt})\longrightarrow (\KK f_{xy}\otimes \KK f_{yz})\otimes \KK f_{zt}$. By a computation similar to that in the case of group graded vector spaces and to the calculations proving Theorem \ref{t.clsdef}, it is straightforward to note that the pentagon diagram axiom (coherence of associativity constraint) is equivalent to the fact that $\omega$ is a 3-cocycle (note: the diagram has two paths, one with two arrows and one with three; these correspond to 5 terms and yield two terms of one sign and three of opposite, leading to the 3-cocycle condition), and that two 3-cocycles determine the same monoidal category if and only if they belong to the same orbit of the action of $\Aut(P)$ on $H^3(\Delta(P),\KK^*)$. 

\begin{corollary}
The categories $\KK-{\mathbf Vec}_P^\omega$ deforming the monoidal structure of $\KK-{\mathbf Vec}_P$ are classified up to equivalence by $H^3(\Delta(P),\KK^*)/\Aut(P)$ with the natural action of $\Aut(P)$. 
\end{corollary}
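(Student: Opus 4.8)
The plan is to adapt the argument used for Theorem~\ref{t.clsdef}, carried out on the distinguished family of invertible simple objects; throughout, ``equivalence'' means monoidal (tensor) equivalence, since the underlying abelian category of $\KK-{\mathbf Vec}_P^\omega$ does not depend on $\omega$. Write $\KK_{(x,y)}$ for the one-dimensional object of $\KK-{\mathbf Vec}_P$ concentrated in bidegree $(x,y)$, for $x\le y$ in $P$; up to isomorphism these are all the simple objects, the unit is $\mathbf 1=\bigoplus_{x\in P}\KK_{(x,x)}$, and the fusion rules read $\KK_{(x,y)}\otimes\KK_{(z,t)}\cong\KK_{(x,t)}$ when $y=z$ and $0$ otherwise. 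Since every object is a direct sum of such simples and every morphism is determined on its homogeneous components, both the associativity constraint of the category and the tensor structure of any $\otimes$-compatible functor between two such categories are determined by their values on the $\KK_{(x,y)}$'s.

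The first step is to identify a monoidal structure of the prescribed shape with a $3$-cocycle. After the standard normalization of the unit constraints for skeletal monoidal categories (so that they play no role), the associativity constraint of $\KK-{\mathbf Vec}_P^\omega$ is encoded by the scalars $a_{\KK_{(a_0,a_1)},\KK_{(a_1,a_2)},\KK_{(a_2,a_3)}}=\omega(a_0,a_1,a_2,a_3)$ indexed by the $3$-simplices of $\Delta(P)$ (chains $a_0\le a_1\le a_2\le a_3$), together with zero maps on mismatched bidegrees. Writing out the pentagon axiom on a quadruple $\KK_{(a_0,a_1)},\dots,\KK_{(a_3,a_4)}$ coming from a $4$-simplex $a_0\le\cdots\le a_4$ and comparing the two composites of associators (one of length two, one of length three, matching the five summands of the pentagon) produces exactly
$$\omega(a_1,a_2,a_3,a_4)\,\omega(a_0,a_1,a_3,a_4)\,\omega(a_0,a_1,a_2,a_3)=\omega(a_0,a_2,a_3,a_4)\,\omega(a_0,a_1,a_2,a_4),$$
that is $\delta^4(\omega)=1$; mismatched quadruples contribute only the trivial identity. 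Hence monoidal structures of the stated form correspond exactly to $3$-cocycles $\omega\in Z^3(\Delta(P),\KK^*)=\ker(\delta^4)$.

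The second step handles the equivalence relation, and here the two directions mirror the proof of Theorem~\ref{t.clsdef}. For a $2$-cochain $\beta\in E^2$, the identity functor with tensor structure $J_{\KK_{(a_0,a_1)},\KK_{(a_1,a_2)}}=\beta(a_0,a_1,a_2)$ (extended in the unique way, by zero maps where necessary) is a monoidal equivalence $\KK-{\mathbf Vec}_P^\omega\to\KK-{\mathbf Vec}_P^{\mu}$ with $\mu=\omega\cdot\delta^3(\beta)$, the coherence axiom for $(\mathrm{Id},J)$ being precisely this identity after restriction to invertible simples; and a poset automorphism $\sigma\in\Aut(P,\le)$ gives the relabeling functor $\KK_{(x,y)}\mapsto\KK_{(\sigma x,\sigma y)}$, a monoidal equivalence $\KK-{\mathbf Vec}_P^\omega\to\KK-{\mathbf Vec}_P^{\omega^\sigma}$. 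Thus the class of $\KK-{\mathbf Vec}_P^\omega$ depends only on the $\Aut(P)$-orbit of $[\omega]$. Conversely, given a monoidal equivalence $F:\KK-{\mathbf Vec}_P^\omega\to\KK-{\mathbf Vec}_P^{\omega'}$, it carries $\mathbf 1$ to $\mathbf 1$ and hence permutes the summands $\KK_{(x,x)}$, defining a bijection $\sigma$ of $P$; using the fusion rules, $\KK_{(x,y)}$ is the unique simple $V$ with $\KK_{(x,x)}\otimes V\ne 0\ne V\otimes\KK_{(y,y)}$, so $F(\KK_{(x,y)})\cong\KK_{(\sigma x,\sigma y)}$, which forces $x\le y\Rightarrow\sigma x\le\sigma y$ and, applied also to $F^{-1}$, gives $\sigma\in\Aut(P,\le)$. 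Composing $F$ with relabeling by $\sigma^{-1}$ and transporting structure along the isomorphisms $F(\KK_{(x,y)})\cong\KK_{(x,y)}$, one reduces to a monoidal equivalence whose underlying functor is the identity; its tensor structure on invertibles is a $2$-cochain $\beta$, and the coherence axiom forces $\omega'$ and $\omega^{\sigma^{-1}}$ to differ by $\delta^3(\beta)$. Therefore $[\omega]$ and $[\omega']$ lie in one $\Aut(P)$-orbit, and combining both directions gives the asserted bijection.

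The main obstacle is the converse in the last step: showing that an arbitrary monoidal equivalence decomposes as a relabeling composed with a coboundary twist, which requires the skeletal-normalization bookkeeping and the careful translation of the functor's coherence axiom into the differential $\delta^3$. The feature special to this setting, as opposed to the group-graded case of \cite[Section~2.3]{EGNO}, is that tensor products of nonzero objects may vanish; one must check that the zero maps involved are automatically coherent and never obstruct the argument, which is routine once the reduction to invertible simples is in place.
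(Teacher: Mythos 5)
Your proposal is correct and follows essentially the same route the paper indicates: it is exactly the "computation similar to the group-graded case of \cite[Section 2.3]{EGNO} and to the proof of Theorem \ref{t.clsdef}" that the paper sketches, with the pentagon axiom giving $\delta^4(\omega)=1$, coboundary twists and poset relabelings giving equivalences, and the converse obtained by letting an equivalence act on the summands of $\mathbf 1$ and on the simples $\KK_{(x,y)}$ to extract $\sigma\in\Aut(P,\leq)$ and a $2$-cochain. You have simply written out in detail the reduction the paper leaves implicit, so no further comparison is needed.
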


\subsection*{A special subring of $Rep(P)$ and the Grothendieck ring}

We start by illustrating the idea with an example. Consider the poset $\{b,a,c\}; b>a<c$, whose incidence algebra is simply the quiver algebra of an ${\mathbb{A}}_3$ quiver: $\xymatrix{\bullet & \bullet \ar[l]\ar[r] & \bullet}$. The (unique) representations $M,N$ of dimension vectors $(1,1,0)$ and $(0,1,1)$ respectively are distributive, and their tensor product is $M\otimes N=(1,1,0)\otimes (0,1,1)=(0,1,0)$, the unique representation of dimension vector $(0,1,0)$. In general, any distributive representation of a poset $P$ will consist of $0$ and $1$-dimensional spaces placed at vertices (since it is a thin representation). Let $M$ be such a representation with support poset $S$; then the 1-dimensional spaces appear at vertices in the support $S$ of $M$. If $M,N$ be two distributive representations of $P$, with support ${\rm Supp}(M),{\rm Supp}(N)$ and cocycles $\alpha(M),\alpha(N)$ respectively; it will be convenient consider the cocycle $\alpha$ of a distributive representation $M$ of support $S$ defined for all $x\leq y$ (not only for $x\leq_S y$) by extending the definition to $\alpha_{x,y}=0$ whenever $x\leq y$ but $x\not\leq_S y$. The following result, describing the monoidal structure of distributive representations is easy to see, having in mind the example above. We will need to define another combinatorial notion, the intersection of closed subposets of $P$: if $(S,\leq_S),(T,\leq_T)$ are two closed subposets of $P$, their {\it intersection or wedge} $(S\wedge T,\leq_{S\wedge T})$ is defined by $S\wedge T= S\cap T$ as sets and $x\leq_{S\wedge T} y$ if and only if $x\leq_S y$ and $x\leq_T y$. It is straightforward to check that the wedge of two closed sub-posets of $P$ is again a closed sub-poset.

\begin{theorem}\label{t.RRdis}
Let ${\mathcal D}=\bigsqcup\limits_{(S,\leq_S){\rm\,closed}}H^1(\Delta(S),\KK^*)$ be the set of distributive representations of $I(P,\KK)$, equivalently, the set of thin representations, parametrized as in Theorem \ref{t.discls}. Then ${\mathcal D}$ is a semigroup with respect to the tensor product of representations, described in terms of the parametrization as follows: ${\rm Supp}(M\otimes N)={\rm Supp}(M)\wedge {\rm Supp}(N)$ and $\alpha(M\otimes N)=\alpha(M)\alpha(N)$, where multiplication is done point-wise (in $\KK$); the non-zero part of $\alpha(M\otimes N)$ will be a 1-cocycle for ${\rm Supp}(M\otimes N)$. The semigroup algebra of this semigroup is a subring of the representation ring of $P$.
\end{theorem}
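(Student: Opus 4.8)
The plan is to establish the three assertions in the order they are stated: first that $\mathrm{Supp}(M\otimes N)=\mathrm{Supp}(M)\wedge\mathrm{Supp}(N)$, then that $\alpha(M\otimes N)=\alpha(M)\alpha(N)$ pointwise (and that its nonzero part is a $1$-cocycle on the intersection), and finally that $\mathbb{Z}[\mathcal{D}]$ embeds as a subring of the representation ring $Rep(P)$. Throughout I would use the concrete models from the previous sections: a distributive representation $M$ with support $S$ is, after choosing an incidence basis, given by a $1$-dimensional space $\mathbb{K}m_x$ at each $x\in S$ and zero elsewhere, with $f_{xy}\cdot m_y=\alpha_{xy}m_x$ where $\alpha$ is the extended cocycle ($\alpha_{xy}=0$ unless $x\leq_S y$). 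The tensor product is the pointwise one coming from the comultiplication $\delta(f_{xy})=f_{xy}\otimes f_{xy}$ discussed in the categorification section, so $(M\otimes N)_x=M_x\otimes N_x$ and $f_{xy}$ acts diagonally.

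\textbf{Support.} The space $(M\otimes N)_x=M_x\otimes_\mathbb{K} N_x$ is nonzero exactly when both $M_x$ and $N_x$ are nonzero, i.e. when $x\in\mathrm{Supp}(M)\cap\mathrm{Supp}(N)=S\cap T$ as sets. For the order relation, $f_{xy}$ acts nontrivially on $M\otimes N$ (for $x\leq y$ in $P$) precisely when $f_{xy}\cdot(m_x\otimes n_x)=(\alpha^M_{xy}m_y)\otimes(\alpha^N_{xy}n_y)=\alpha^M_{xy}\alpha^N_{xy}(m_y\otimes n_y)$ is nonzero, which happens iff $\alpha^M_{xy}\neq 0$ and $\alpha^N_{xy}\neq 0$, i.e. iff $x\leq_S y$ and $x\leq_T y$. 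By Definition of the wedge this is exactly $x\leq_{S\wedge T}y$, so $\mathrm{Supp}(M\otimes N)=S\wedge T$ as claimed; the fact that $S\wedge T$ is again a closed subposet was already noted in the paragraph preceding the theorem.

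\textbf{Cocycle.} The computation just performed also shows that the representation $M\otimes N$ is the one attached to the system of scalars $\alpha_{xy}:=\alpha^M_{xy}\alpha^N_{xy}$, so $\alpha(M\otimes N)=\alpha(M)\alpha(N)$ pointwise by definition. It remains to check that the restriction of this product to pairs $x\leq_{S\wedge T}y$ is a genuine $1$-cocycle in $Z^1(\Delta(S\wedge T),\mathbb{K}^\times)$. For $x\leq_{S\wedge T}y\leq_{S\wedge T}z$ all three pairs $(x,y),(y,z),(x,z)$ lie in $S$ and in $T$ (here one uses the closed-subposet/transitivity property established just before Definition of the support), so $\alpha^M_{xz}=\alpha^M_{xy}\alpha^M_{yz}$ and likewise for $\alpha^N$; multiplying gives $\alpha^M_{xz}\alpha^N_{xz}=(\alpha^M_{xy}\alpha^N_{xy})(\alpha^M_{yz}\alpha^N_{yz})$, which is the multiplicativity required. (Equivalently, $Z^1$ is a subgroup of the group of $\mathbb{K}^\times$-valued $1$-cochains under pointwise multiplication, and both factors lie in it.) Thus $M\otimes N\in\mathcal{D}$ and the map $\mathcal{D}\times\mathcal{D}\to\mathcal{D}$ is well defined; associativity and commutativity of $\otimes$ are inherited from $Rep(P)$, and the unit is the defining representation of $P$ itself (support all of $P$, cocycle $\equiv 1$), so $\mathcal{D}$ is a commutative monoid.

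\textbf{Semigroup subring.} Finally, since $\mathcal{D}$ is a set of pairwise non-isomorphic indecomposable-or-not representations that is closed under $\otimes$ and whose isomorphism classes are linearly independent in the representation ring $Rep(P)$ (they are distinct classes, and $Rep(P)$ is free on isomorphism classes of indecomposables — and each distributive representation decomposes uniquely, so distinct $M$ give $\mathbb{Z}$-linearly independent elements), the $\mathbb{Z}$-span of $\mathcal{D}$ inside $Rep(P)$ is closed under multiplication, hence a subring; the natural map $\mathbb{Z}[\mathcal{D}]\to Rep(P)$ from the semigroup algebra is a ring homomorphism and is injective by this linear independence. I expect the only genuinely delicate point to be making the last linear-independence/well-definedness claim fully precise — specifically, checking that one does not have to worry about a distributive representation being a summand of a non-distributive one in a way that creates relations; this is handled by Krull--Schmidt and the fact (Remark on semidistributive modules) that summands of distributive modules are distributive, so the classes in $\mathcal{D}$ sit among a basis of $Rep(P)$ as a free abelian group.
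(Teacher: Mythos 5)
The support and cocycle computations in your first two paragraphs are correct and are exactly the verification the paper has in mind: the paper itself disposes of these points with ``easy to see, having in mind the example above'' after setting up the pointwise tensor product coming from $\delta(f_{xy})=f_{xy}\otimes f_{xy}$, so your write-up supplies the intended argument (support of the product is the wedge because $f_{xy}$ acts nontrivially on $M\otimes N$ iff it acts nontrivially on both factors; the product of the two extended cocycles is the cocycle of $M\otimes N$, and its nonzero part is multiplicative on $S\wedge T$ by transitivity of $\leq_{S\wedge T}$). One small slip: with your stated convention $f_{xy}\cdot m_y=\alpha_{xy}m_x$, the displayed computation should read $f_{xy}\cdot(m_y\otimes n_y)=\alpha^M_{xy}\alpha^N_{xy}\,(m_x\otimes n_x)$; you wrote the indices the other way around, but this does not affect the logic.

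The final paragraph, however, has a genuine flaw. The classes of the elements of $\mathcal{D}$ are \emph{not} linearly independent in $Rep(P)$: the representation ring is free on isomorphism classes of \emph{indecomposables}, and $\mathcal{D}$ contains decomposable modules. Already for $P$ a two-element antichain, the modules $S_a$, $S_b$ and $S_a\oplus S_b$ all lie in $\mathcal{D}$ and satisfy $[S_a\oplus S_b]=[S_a]+[S_b]$, so the natural ring map $\mathbb{Z}[\mathcal{D}]\rightarrow Rep(P)$ is not injective. Krull--Schmidt gives you that distinct modules have distinct classes, not that these classes form part of a $\mathbb{Z}$-basis, and the observation that summands of distributive modules are distributive works against you here: it shows that the summands producing such relations are themselves in $\mathcal{D}$. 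What is true, and what the last sentence of the theorem should be read as asserting, is that the $\mathbb{Z}$-span of the classes of $\mathcal{D}$ inside $Rep(P)$ is a subring: this follows immediately from the fact that $\mathcal{D}$ is closed under $\otimes$ together with bilinearity of the product, and this subring is a homomorphic image of the semigroup algebra $\mathbb{Z}[\mathcal{D}]$ (an isomorphism only once the relations coming from direct-sum decompositions of thin modules are taken into account). So you should replace the claimed embedding by this closure argument, or restrict to a set of representatives for which independence actually holds.
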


To illustrate, we go back to the example in the beginning of this subsection, with the poset $P:b>a<c$ and $M,N$ the representations of dimension vectors $(1,1,0)$ and $(0,1,1)$. We have ${\rm Supp}(M)=(\{a,b\}; a<b)$ and ${\rm Supp}(N)=(\{a,c\}, a<c)$ as posets. Let us write the ``extended" 1-cocycles defined on the set of ordered pairs (intervals) $C_1=C_1(P)=\{(x,y)|x,y,z\in\{a,b,c\}, x\leq y\}$ (the 1-skeleton of $\Delta(P)$), as ordered strings $\alpha(b,b);\alpha(a,b);\alpha(a,a);\alpha(a,c);\alpha(c,c)$. Then  $\alpha(M)=[1,1,1,0,0]$ and $\alpha(N)=[0,0,1,1,1]$. Furthermore, $M\otimes N$ is described by ${\rm Supp}(M\otimes N)=\{a\}={\rm Supp}(M)\cap {\rm Supp}(N)$ and cocycle $\alpha(M\otimes N)=[0,0,1,0,0]=\alpha(M)\alpha(N)$. 

This also describes a semigroup structure on the set of distributive modules ${\mathcal D}$ of $I(P,\KK)$, and at the same time ${\mathcal D}$ generates a tractable subring of the representation ring of $P$, which is in fact a semigroup ring. Note that it is also unital, with the defining representation of $P$ being a unit. 
 
\subsection*{The Grothendieck ring of a poset}



Consider the Grothendieck group $K_0(P)$, which is, by definition, the Grothendieck group $K_0$ of the category of $I(P,\KK)$-modules. Is the free abelian group with basis the (isomorphism types of) projective $I(P,\KK)$-modules. Of course, there is another important notion of Grothendieck group $G_0(P)$ often used for finite dimensional algebras, which is the Grothendieck group of the monoid freely generated with isomorphism classes of finitely generated modules, modulo relations given by extensions (in this case, $G_0(P)$ is simply the free abelian group with basis the isomorphism types of simple modules), but the usual K-theoretic group is important from the point of view of monoidal categories: in such cases $K_0$ is often a ring  (in fact, for the case of finite tensor categories, they both $K_0$ and $G_0$ have a ring structure and they are isomorphic \cite{EGNO}).


Note that the tensor product of simple $P$-representations is $S_x\otimes S_y=\delta_{xy} S_x$, and hence, $G_0(P)$ is simply a product of $\ZZ$'s as a ring. For a poset $P$, the (appropriate version of the) ring $K_0$ is more interesting.  

In general, for a poset $P$ it may be that $K_0(P)$ is not closed under multiplication. Consider the poset with Hasse diagram
$$\xymatrix{t & s \\ y \ar[u]\ar[ur] & z\ar[ul]\ar[u]\\ x\ar[u]\ar[ur]}$$
so $P_t={\rm Span}_\KK\{f_{tt},f_{yt},f_{zt},f_{xt}\}$ and $P_s={\rm Span}_\KK\{f_{ss},f_{zs},f_{ys},f_{xs}\}$; the corresponding representations have dimension vectors, written in the order $(x,y,z,t,s)$, given by $(1,1,1,1,0)$ and $(1,1,1,0,1)$, and their tensor product is the unique representation which has dimension vector $(1,1,1,0,0)$; this representation is not projective. 

One could remedy this in two ways. One would be consider the subring of the representation ring consisting of all multiplicity free representations which are sub quotients of the defining representation. These can be regarded a the ``undeformed" thin representations of $P$. The defining representation has $\KK$ at all vertices and identity for of all morphims; a subquotient will replace some of these $\KK$'s as well as  some of the identity maps with $0$'s, such that the resulting representation (of the Hasse quiver of $P$) remains a representation of $P$ (i.e. the appropriate diagrams remain commutative). That means such representations are completely determined by their support (since the cocycle will always be taken as the trivial all 1-cocycle on the support and 0 elsewhere). Let $Cl(P)$ denote the set of closed subposets of $P$ in the sense of Definition \ref{d.closed}, and for $S=(S,\leq_S)\in Cl(P)$, let $R_S$ be the corresponding representation (it has $\KK$ on vertices corresponding to $x\in S$ and identity for all arrows $x\rightarrow y$ for which $x\leq_S y$). As noted, $Cl(P)$ is closed under the wedge of posets.

We have the following proposition, which now follows from the above.

\begin{proposition}
Let ${\mathcal U}(P)$ be the set of undeformed thin representations as above. Then $R_S\otimes R_T\cong R_{S\wedge T}$, for any $S,T\in Cl(P)$. Hence, ${\mathcal U}(P)$ is closed under tensor products, and it is a monoid isomorphic to $(Cl(P),\wedge)$. Furthermore, the semigroup algebra $\ZZ[Cl(P),\wedge]$ is a unital subring of the representation ring $Rep(P)$.
\end{proposition}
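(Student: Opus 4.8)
The plan is to reduce the whole statement to the single isomorphism $R_S\otimes R_T\cong R_{S\wedge T}$, which I would verify directly from the point-wise description of the tensor product of representations of the Hasse quiver $Q$ of $P$ recalled above: for $P$-representations $M,N$ one has $(M\otimes N)_x=M_x\otimes_\KK N_x$ at each vertex $x$, and $(M\otimes N)_a=M_a\otimes N_a$ for each arrow $a\colon x\to y$ of $Q$. First I would note that this construction stays inside the category of $P$-representations — equivalently, that the kernel of $\KK[Q]\to I(P,\KK)$ is a coideal, as already observed — which also guarantees that $R_{S\wedge T}$ is well defined, since $S\wedge T\in Cl(P)$.

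Next I would apply this with $M=R_S$, $N=R_T$ and compare with the definition of $R_{S\wedge T}$. At a vertex $x$, $(R_S\otimes R_T)_x=(R_S)_x\otimes(R_T)_x$ is $\KK\otimes_\KK\KK\cong\KK$ when $x\in S\cap T=S\wedge T$ and is $0$ otherwise. For an arrow $a\colon x\to y$ of $Q$, the map $(R_S\otimes R_T)_a=(R_S)_a\otimes(R_T)_a$ is the identity of $\KK$ (under $\KK\otimes_\KK\KK\cong\KK$) exactly when both $(R_S)_a$ and $(R_T)_a$ are identities, i.e.\ when $x\leq_S y$ and $x\leq_T y$, which by the definition of the wedge is precisely $x\leq_{S\wedge T}y$; otherwise one factor vanishes and the map is $0$. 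Thus $R_S\otimes R_T$ and $R_{S\wedge T}$ have equal spaces at every vertex and the same (identity-or-zero) structure map at every arrow of $Q$, so the componentwise identifications $\KK\otimes_\KK\KK\cong\KK$ assemble into the desired isomorphism of $P$-representations.

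From here the rest is formal. Each undeformed thin representation, being a subquotient of the defining representation with all entries of its dimension vector in $\{0,1\}$, is determined by its support, which is a closed subposet; conversely every $S\in Cl(P)$ is recovered as ${\rm Supp}(R_S)$ — the underlying set of $S$ is the set of vertices of $R_S$ carrying a copy of $\KK$, and $x\leq_S y$ precisely when the structure maps of $R_S$ compose to the identity along some Hasse path from $x$ to $y$. So $S\mapsto R_S$ is a bijection $Cl(P)\to\mathcal U(P)$ which, by the previous step, turns $\wedge$ into $\otimes$; hence $\mathcal U(P)$ is closed under tensor products and is a commutative monoid isomorphic to $(Cl(P),\wedge)$, with identity the defining representation $R_P$ (corresponding to the top element $P$, since $P\wedge S=S$ and directly $R_P\otimes M\cong M$ for every $P$-representation $M$, as $R_P$ carries $\KK$ at each vertex and the identity at each arrow). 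Extending $S\mapsto[R_S]$ by $\ZZ$-linearity then gives a unital ring homomorphism $\ZZ[Cl(P),\wedge]\to Rep(P)$, multiplicative by the displayed isomorphism and unital by the last remark; it is injective because the $R_S$ ($S\in Cl(P)$) are pairwise non-isomorphic, and its image is the asserted unital subring.

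I do not expect a real obstacle here; the only places that need care are (i) checking that the point-wise tensor product genuinely sends $P$-representations to $P$-representations — exactly the coideal remark above, and the reason $R_{S\wedge T}$ makes sense — and (ii) reading ``subring of $Rep(P)$'', as elsewhere in the paper, as the image of the ring homomorphism just constructed, the relevant input being that distinct closed subposets yield non-isomorphic representations, so that the classes $[R_S]$ are $\ZZ$-independent in $Rep(P)$ and the semigroup algebra embeds.
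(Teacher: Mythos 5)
Your proof is correct and fills in exactly the argument the paper leaves implicit ("follows from the above"): the pointwise description of the tensor product, the vertex-by-vertex and arrow-by-arrow comparison giving $R_S\otimes R_T\cong R_{S\wedge T}$, and the formal passage to the monoid isomorphism and the unital embedding of the semigroup algebra. The only point worth flagging is that when you recover $\leq_S$ from $R_S$ you should (as you in fact do) use composites along Hasse paths rather than single arrows, since $x,y\in S$ with $x\le y$ in $P$ need not satisfy $x\leq_S y$; with that reading everything checks out.
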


It is an interesting combinatorial problem to determine properties of such rings. For each subset $X$ of $P$, let $P_{\leq X}=\{t\in P| \, t\leq x,\,\forall\,x\in X\}$; obviously, $P_{\leq X}\in Cl(P)$ and $P_{\leq X}\wedge P_{\leq Y}=P_{\leq X\cup Y}$. In particular, the projective modules are $R_{P_{\leq a}}$, for $a\in P$. Hence, $\{P_{\leq X}| X\subseteq P\}$ provides a submonoid of ${\mathcal U}(P)$, and its span provides another interesting subring of $Rep(P)$. Note that, of course, some of the $P_{\leq X}$ may be equal, in general. 

In particular, let $P$ be a meet-semilattice. That means that $(P,\leq)$ is a poset such that every two elements $x,y$ in $P$ have an infimum $x\wedge y$ (meet) in $P$; we do not require that $P$ has a unique maximal element (sometimes this is called pseudo-semilattice; we refer to the ncatlab discussion on this). We note that meet-semilattices are precisely the commutative idempotent semigroups. Since the projective indecomposables are $P(x)=R_{P_{\leq x}}$, using the notations above, we obtain the following categorification result for join semilattices.

\begin{proposition}
Let $P$ be a meet-semilattice. Then $K_0(P)$ is closed under tensor products - $P(x)\otimes P(y)=P(x\wedge y)$ - and the Grothendieck ring $K_0(P)$ is isomorphic to the semigroup algebra $\ZZ[P,\wedge]$. By duality, any join-semilattice can be categorized similarly. 
\end{proposition}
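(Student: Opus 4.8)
The plan is to deduce this proposition directly from the preceding results, treating it as essentially a corollary of the discussion of $\mathcal{U}(P)$ and the structure of $K_0(P)$. First I would recall that for the incidence algebra $A = I(P,\KK)$ the projective indecomposable left module $P(x)$ is, by Lemma~\ref{l.reps}, the module $P_x = {\rm Span}_\KK\{f_{yx} \mid y \leq x\}$, which viewed as a representation of the Hasse quiver of $P$ has $\KK$ at every vertex $y$ with $y \leq x$ and the identity map on every arrow inside the interval $[\,\cdot\,,x]$; in the notation of the preceding paragraph this is exactly $R_{P_{\leq x}}$, where $P_{\leq x} = \{t \in P \mid t \leq x\}$ is a closed subposet. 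So the key observation is simply $P(x) = R_{P_{\leq x}}$ inside the category of representations of $P$.

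Next I would invoke the preceding Proposition on $\mathcal{U}(P)$: for closed subposets $S,T \in Cl(P)$ one has $R_S \otimes R_T \cong R_{S\wedge T}$. Applying this with $S = P_{\leq x}$ and $T = P_{\leq y}$, together with the identity $P_{\leq x} \wedge P_{\leq y} = P_{\leq \{x,y\}}$ (which follows from the definition of the wedge: $t \leq_{P_{\leq x}} u$ and $t \leq_{P_{\leq y}} u$ both hold iff $t \leq u$ and $t,u \leq x$ and $t,u \leq y$), gives $P(x) \otimes P(y) \cong R_{P_{\leq \{x,y\}}}$. Now the meet-semilattice hypothesis enters: when $P$ has binary meets, $P_{\leq \{x,y\}} = P_{\leq (x\wedge y)}$, since $t \leq x$ and $t \leq y$ is equivalent to $t \leq x\wedge y$ by the universal property of the meet. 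Hence $R_{P_{\leq\{x,y\}}} = R_{P_{\leq(x\wedge y)}} = P(x\wedge y)$, and we conclude $P(x) \otimes P(y) \cong P(x\wedge y)$. In particular the tensor product of two projective indecomposables is again a projective indecomposable, so $K_0(P)$ — the free abelian group on the isomorphism classes of projective modules — is closed under the induced multiplication. (One should note in passing that every projective $A$-module is a direct sum of the $P(x)$ and that $\otimes$ distributes over $\oplus$, which is what makes $K_0(P)$ a subring of $Rep(P)$ rather than merely a multiplicatively closed subset.)

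Finally I would identify the resulting ring. The assignment $x \mapsto [P(x)]$ sends the semilattice operation $\wedge$ to the ring multiplication by the formula just proved, and it is a bijection from $P$ onto the chosen $\ZZ$-basis $\{[P(x)] \mid x \in P\}$ of $K_0(P)$ (distinct $x$ give non-isomorphic projectives since $A$ is basic). Extending $\ZZ$-linearly yields an isomorphism of rings $\ZZ[P,\wedge] \xrightarrow{\ \sim\ } K_0(P)$; one checks it is a ring map precisely on basis elements, where it is the computation above, and it is a bijection on the distinguished bases, hence a $\ZZ$-module isomorphism. The dual statement for join-semilattices follows by passing to $I(P,\KK)^{op} = I(P^{op},\KK)$, under which meets become joins and left projectives become right projectives, or equivalently by working with the Grothendieck group of injectives in the coalgebra picture.

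I do not expect any serious obstacle here: every ingredient has been established, and the only things to verify are the two elementary lattice identities $P_{\leq x}\wedge P_{\leq y} = P_{\leq\{x,y\}}$ and $P_{\leq\{x,y\}} = P_{\leq(x\wedge y)}$, plus the bookkeeping that the basis bijection is a ring homomorphism. If anything requires care it is making sure the wedge of the specific closed subposets $P_{\leq x}$ and $P_{\leq y}$ computed \emph{as subposets} (with the induced order relations, not just as subsets) really does coincide with $P_{\leq(x\wedge y)}$ with its full induced order — but this is immediate from transitivity of $\leq$ once the underlying sets agree.
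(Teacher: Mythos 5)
Your argument is correct and follows exactly the route the paper intends: identify $P(x)=R_{P_{\leq x}}$, apply the preceding proposition $R_S\otimes R_T\cong R_{S\wedge T}$ together with $P_{\leq x}\wedge P_{\leq y}=P_{\leq\{x,y\}}=P_{\leq (x\wedge y)}$, and extend $\ZZ$-linearly. The paper leaves these steps implicit ("using the notations above, we obtain..."), so your write-up is simply a fuller version of the same proof.
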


This seems to suggest a general {\bf principle for categorification}. We do not attempt to give a rigorous formulation of this - the theme of categorification is of very active current interest and studied and developed by many authors (we only mention the survey \cite{KMS} and references therein); but note it as a possible avenue of investigation which may apply to certain types of situations. To categorify a certain algebraic structure $S$ (semigroup, etc.), first construct a combinatorial object ``modeled" on $S$, with ``points" the elements of $S$ and relations provided by the algebraic properties of (finite) subsets of $S$. Then consider the category of representations of such a combinatorial object, and its Grothendieck group as a candidate for categorifying (a certain part) of $S$. 

\subsection*{A Hall-algebra type of construction} We only briefly note that it is possible to consider a Hall-type of coalgebra, using again distributive representations: consider the set ${\mathcal D}$ of isomorphism classes of distributive representations, and a vector space $V$ with (formal) base $\mathcal D$. We can define a comultiplication $\Delta$ on $V$ of the following type: for a distributive representation $M$, one can define $\Delta([M])=\sum\limits_{X<M}(\gamma_{X,M})[X]\otimes [M/X]$ where $\gamma_{X,M}$ could be any coefficients which make the structure coassociative. It would be perhaps interesting to study such coalgebras and their dual algebras; they would be close to incidence algebras generated by closed subsets of $P$ and their order inclusion. This poset is slightly more general than the face poset of $P$ (with elements the simplices in $P$ ordered by inclusion). 

\subsection*{Questions}

We end by noting a few questions or problems that seem to arise from the current study.

{\bf Question 1.}
Study the Ringel-Hall type of algebra presented in the last paragraph.

{\bf Question 2.}
Extend the results presented here to the infinite dimensional setting, as noted in the introductory remarks.

{\bf Question 3.}
Give a complete description of the automorphism group of a deformation of an incidence algebra, possibly as an iterated semidirect product; this should also include a description of the group of units of $I_\lambda(P,\KK)$, up to isomorphism.  

{\bf Question 4.} 
Give a complete description of the Lie algebra of derivations of a (deformation) of an incidence algebra, in terms of inner derivations and outer derivations ($HH^1$).

{\bf Question 5.}
In the context of finite dimensional basic pointed algebras, find the proper combinatorial setup, and associated topological gadget, to describe left/right semidistributive algebras (i.e. when projective indecomposable are distributive in general), respectively, algebras with finitely many ideals. The question can restrict to the acyclic case, or can be considered in full generality.


{\bf Question 6.}
What types of Grothendieck rings can be obtained by possibly using more general combinatorial algebras as in Question 5? For particular examples of posets, compute explicitly the rings generated by the thin representations inside the representation ring. Find consequences on and study the representation ring of a poset $P$ using these subrings (note that in particular there are some ``large" group rings inside $Rep(P)$ when $P$ has $H_1$ of free rank $\geq 1$). 

{\bf Question 7.}
We end by reiterating the accessibility conjecture of Ringel and Bongartz:\\
If $M$ is an indecomposable module of finite length over some algebra $A$, is $M$ accessible, at least in the case when $M$ is distributive? Is there a reduction procedure to this case? \\
It is tempting (and perhaps within reach) to try to extend the ideas in the proof of Theorem \ref{t.accessible} for general distributive modules. We believe any partial answer to this would be interesting.

\bigskip

\begin{center}
{\sc Acknowledgment}
\end{center}
MI would like to very gratefully thank Gene Abrams, Marcelo Aguiar, Ibrahim Assem, Calin Chindris, Birge Huisgen-Zimmermann, Kiyoshi Igusa, Alex Martsinkovsky, Marcel Wild, Dan Zacharia, as well as to current or former colleagues Vic Camillo, Alexander Sistko, David Meyer for many interesting discussions on various related topics and observations on several previous versions of the paper, which greatly helped with the development of the ideas.  







\end{document}